\documentclass[11pt]{amsart}

\usepackage[dvipsnames]{xcolor}
\usepackage[english]{babel}
\usepackage[T1]{fontenc}
\usepackage[utf8]{inputenc}
\usepackage{amsmath}
\usepackage{amssymb}
\usepackage{amsthm}
\usepackage{color}
\usepackage[a4paper]{geometry}
\usepackage{tikz}
\usepackage[all]{xy}
\usepackage{hyperref}
\usepackage{lmodern}
\usepackage{mathrsfs}

\theoremstyle{definition}
\newtheorem{definition}{Definition}[section]

\newtheorem{rmk}[definition]{Remark}

\newtheorem{quest}[definition]{Question}

\theoremstyle{plain}
\newtheorem{theorem}[definition]{Theorem}
\newtheorem{prop}[definition]{Proposition}
\newtheorem{cor}[definition]{Corollary}
\newtheorem{lem}[definition]{Lemma}

\newcommand{\UA}{\operatorname{D}}

\newcommand{\mb}{\mathbb}
\newcommand{\mc}{\mathcal}
\newcommand{\mf}{\mathfrak}

\newcommand{\bs}{\boldsymbol}

\newcommand{\Lie}{\operatorname{Lie}}

\newcommand{\vol}{\operatorname{vol}}
\newcommand{\dist}{\operatorname{dist}}
\newcommand{\SL}{\operatorname{SL}}
\newcommand{\R}{\mathbb{R}}
\newcommand{\s}{\mathbb{S}}
\newcommand{\Z}{\mathbb{Z}}
\newcommand{\N}{\mathbb{N}}
\newcommand{\dd}{\textup{d}}
\newcommand{\id}{\operatorname{id}}

\newcommand{\Lone}{\operatorname{L}^{1}}
\newcommand{\da}{Diophantine approximation}
\newcommand{\ul}{\underline}

\makeatletter
\newcommand{\vast}{\bBigg@{3}}
\newcommand{\Vast}{\bBigg@{4}}
\makeatother

\newcommand{\ignore}[1]{}

\newif\ifdraft\drafttrue

\draftfalse

\newcommand\eq[2]{{\ifdraft{\ \tt [#1]}\else\ignorespaces\fi}\begin{equation}\label{eq:#1}{#2}\end{equation}}
\newcommand {\equ}[1]     {\eqref{eq:#1}}

\numberwithin{equation}{section}


\usepackage{scalerel,stackengine}
\stackMath
\newcommand\reallywidehat[1]{%
\savestack{\tmpbox}{\stretchto{%
  \scaleto{%
    \scalerel*[\widthof{\ensuremath{#1}}]{\kern-.6pt\bigwedge\kern-.6pt}%
    {\rule[-\textheight/2]{1ex}{\textheight}}
  }{\textheight}%
}{0.5ex}}%
\stackon[1pt]{#1}{\tmpbox}%
}
\parskip 1ex


\begin{document}

\title[Submanifold-genericity and uniform multiplicative approximation]{Submanifold-genericity of $\R^d$-actions and\\  uniform multiplicative Diophantine approximation}

\author{Prasuna Bandi}
\address{Department of Mathematics, University of Michigan, Ann Arbor MI}
\email{prasuna@umich.edu}

\author{Reynold Fregoli}
\address{Department of Mathematics, University of Michigan, Ann Arbor MI}
\email{reynoldfregoli@gmail.com}

\author{Dmitry Kleinbock}
\address{Department of Mathematics, Brandeis University, Waltham MA}
\email{kleinboc@brandeis.edu}

\thanks{The third named author was supported by NSF grant DMS-2155111.}
\date{March 2025}

\subjclass{37A25, 37A44, 11J13, 11K60}

\begin{abstract}
In this paper, we prove a new ergodic theorem for $\mb R^d$-actions involving averages over dilated submanifolds, thereby generalizing the theory of spherical averages. Our main result is a quantitative estimate for the error term of such averages valid for smooth functions under some effective mixing assumptions on the action. {With the aid of this theorem, we investigate  multiplicative-type Dirichlet-improvability for $(m\times n)$-matrices with real coefficients.} In particular, we
establish that almost all matrices are uniformly approximable by the function $x\mapsto x^{-1}(\log x)^{-1+\varepsilon}$ for any $\varepsilon>0$. Results of this type motivate a {question} which can be thought as a strengthening of Littlewood's conjecture in multiplicative \da.
\end{abstract}

\maketitle

\section{Introduction}

\subsection{Averages along dilates of submanifolds}

Let  $(X,\mu)$ be a probability space. Fix $d\in\N$ and  consider a $\mu$-preserving action $(a,x)\mapsto a.x$ of $\mb R^d$ on $X$, where $a\in \mb R^d$ and $x\in X$. For a subset $B$ of $\R^d$ let $tB$  denote the dilate of $B$ by a real parameter $t>0$ and let $\vol_d$ stand for the Lebesgue measure on $\R^d$. A simple consequence of Birkhoff's Ergodic Theorem is that the action $(a,x)\mapsto a.x$ is ergodic if and only if for any function $\varphi\in \Lone(X,\mu)$ and $\mu$-a.e.\ $x\in X$ it holds that
\begin{equation}
\label{eq:Birk-B}
\frac{1}{t^d\vol_d(B)}\int_{tB}\varphi(a.x)\,\dd\vol_d(a)\to \int_{X}\varphi\,\dd\mu\quad  \text{as }t\to \infty,   
\end{equation}
where $B$ is the Euclidean unit ball in $\R^d$. In fact, more generally, $B$ in \eqref{eq:Birk-B} can be any bounded positive-measure subset of $\R^d$ (see e.g. \cite[Theorem 1.2]{Lindenstrauss}), while the test functions $\varphi$ may be chosen from a dense subspace $\mc F$ of $\Lone(X,\mu)$. For example, if $X$  is a smooth manifold and $\mu$ is a volume measure, it is enough to assume that $\varphi$ is smooth and compactly-supported.

The convergence in \eqref{eq:Birk-B} becomes substantially more delicate if
the measure $\vol_d$ in the left-hand side is replaced by a singular measure. Such averages have been studied at length, especially in the case when $B=\mb S^{d-1}$ (spherical averages); see \cite{Stein, Bourgain, Jones, Lacey, Jaming}. Notably, in this case, the almost sure convergence in \eqref{eq:Birk-B} depends on the regularity of the test functions to which the averaging is applied. 

This suggests the following. 

\begin{definition}\label{def:genericity} Take $k\le d$, let $\mathcal F$ be a subspace of $\Lone(X,\mu)$, and let $M$ be a 
{compact} $k$-dimensional $\mathscr{C}^1$  submanifold {(possibly with boundary, which in what follows will be a standing assumption on compact manifolds $M$)} of $ \R^d$. Denote by $\vol_k$ the $k$-dimensional volume measure on $M$ induced by the Euclidean metric on $\R^d$, and let $\nu$ be a probability measure on the the space $X$ (possibly equal to $\mu$). Let us say that the measure $\nu$ is {\sl $(M,\mathcal F)$-generic} if for any function $\varphi \in \mathcal F$ and for $\nu$-a.e.\ point $x\in X$ one has that
\begin{equation}\label{eq:Birk-S}
\frac1{t^{k}\vol_k(M)}\int_{tM}\varphi(a.x)\,\dd\vol_k(a)\to\int_X\varphi\,\dd\mu\quad \text{ as }t\to\infty.
\end{equation}
Let us also say that, for $E\subset \R^d$, the measure $\nu$ is \textsl{$(E,k,\mathcal F)$-generic} if it is $(M,\mathcal F)$-generic for any $k$-dimensional {compact} $\mathscr{C}^1$ submanifold
$M$
contained in 
$E$. If $E=\mb R^d$, we will simply say that $\nu$ is \textsl{$(k,\mathcal F)$-generic}.
\end{definition}

As a motivation, recall that when $X$ is compact, a point $x\in X$ is said to be generic for the action of a group $G$ if \eqref{eq:Birk-S}, with $k =d$ and $M$  a unit ball in $\R^d$, holds for any continuous function $\varphi$ on $X$. Hence it seems natural to extend this definition from points to measures, so that a point $x\in X$ is generic if and only if so is the Dirac measure $\delta_x$. This analogy, however, comes with a caveat: we do not assume the existence of a full measure set of points $x\in X$ (with respect to $\nu$) satisfying \eqref{eq:Birk-S} for all functions $\varphi\in\mc F$ simultaneously.

\begin{rmk}
Here are a few comments on the above definition:
\begin{itemize}
\item[(i)]
As was mentioned before, $(d,\mathcal F)$-genericity of the measure $\mu$ itself is equivalent to ergodicity of the action whenever $\mathcal F$ is dense in $\Lone(X,\mu)$. Furthermore, $(k,\mathcal F)$-genericity of $\mu$ for some $k\le d$ and some subspace $\mathcal{F}$ dense in $\Lone(X,\mu)$ implies ergodicity, simply because it implies ergodicity for the action of any $k$-dimensional subspace of $\R^d$. In view of this, the Haar measure on $\R^d/\Z^d$  is not $(k,\mathcal F)$-generic with respect to the standard (transitive $\Rightarrow$ ergodic) action of $\R^d$ on $\R^d/\Z^d$ for any $\mathcal F$ dense in $\Lone(\R^d/\Z^d)$ and any $k<d$.
Also, clearly, no measure is $(0,\mathcal F)$-generic if the action is non-trivial (and $\mathcal F$ is large enough).
\smallskip
\item[(ii)]  
Consider now the special case of $M=\s^{d-1}$, the unit sphere in $\R^d$. Then there do not exist any non-trivial $\R^d$-actions on $(X,\mu)$ such that $\mu$ is $\big(\s^{d-1},\Lone(X,\mu)\big)$-generic; that is, spherical integrals cannot converge for all functions in $\Lone$. However, convergence occurs if $\varphi\in \operatorname{L}^{p}$ for large enough $p$. For example, according to \cite[Theorem 1.2]{Jaming}, based on earlier work of Jones and Lacey \cite{Jones, Lacey}, for any ergodic $\R^d$-action on $(X,\mu)$, where $d\ge 2$,  the measure $\mu$ is $(\s^{d-1},\operatorname{L}^{p}(X,\mu)\big)$-generic provided $p >\frac d {d-1}$. 
In fact, this range of $p$ is best possible. For example, given any ergodic, measure-preserving, and aperiodic $\R^d$-action on a probability space $(X,\mu)$, for every $p\leq \frac{d}{d-1}$ there exists a function $f\in L^{p}(X,\mu)$ such that \eqref{eq:Birk-B}, with $B$ replaced by $ \s^{d-1}$, fails for $\mu$-a.e.\ $x\in X$ \cite[Theorem 2.3]{Jones}.
\end{itemize}
\end{rmk}

The preceding remark shows that $\R^d$-actions such that the measure $\mu$ itself is $(k,\mathcal F)$-generic for some choice of $k<d$ and dense $\mc F\subset \textup{L}^1(X,\mu)$ are rather hard to find. Our first result provides a fairly large class of such actions.

\begin{theorem}\label{thm:gmodgamma}
Let $G$ be a semi-simple Lie group of real rank $d$ whose simple factors have rank at least $2$. Let $\Gamma$ be a lattice in $G$ and let $\mu$ denote the unique $G$-invariant Radon probability measure on the space $X:=G/\Gamma$. Fix a Cartan subgroup $A$ of $G$, put $\mathfrak a := \Lie(A)$, and consider the action of $\mathfrak a\cong \mb R^d$ on $X=G/\Gamma$ given by \begin{equation}\label{eq:action}a.x := \exp(a)x\;\text{ for } a\in\mathfrak a.\end{equation}
Finally, let $\mathcal{F}:=\mathscr{C}^{\infty}_c(X)$ denote the space of compactly supported smooth functions on $X$. Then the measure
$\mu$ is $(k,\mathcal{F})$-generic for any $1\le k \le d-1$. Furthermore,
for any  {compact} $k$-dimensional $\mathscr{C}^1$ submanifold 
$M\subset \mf a$, any $\delta > 0$, any $\varphi \in \mathcal{F}$, and $\mu$-almost every $x\in X$ there exists $T_0  = T_0(\delta,\varphi,M,x) > 0$ such that
\begin{equation}\label{eq:Birk-S-eff}
\left|\frac1{t^{k}\vol_k(M)}\int_{tM}\varphi(a.x)\,\dd\vol_k(a)-\int_X\varphi\,\dd\mu\right| \leq t^{-k/2+\delta}  
\end{equation}
for all   $t\geq T_0$.
\end{theorem}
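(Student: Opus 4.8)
The plan is to normalize and reduce. Writing $I=\int_X\varphi\,d\mu$ and replacing $\varphi$ by $\varphi-I$, we may assume $I=0$; after the substitution $a=tm$ the expression whose absolute value appears in \eqref{eq:Birk-S-eff} becomes
\[
F_t(x)\;:=\;\frac1{\vol_k(M)}\int_M\varphi\big((tm).x\big)\,d\vol_k(m),
\]
so it suffices to show that for $\mu$-a.e.\ $x$ one has $|F_t(x)|\le t^{-k/2+\delta}$ for all large $t$; the non-quantitative assertion, i.e.\ $(k,\mathcal F)$-genericity of $\mu$, is exactly this conclusion for all compact $k$-dimensional $\mathscr{C}^1$ submanifolds and hence needs nothing extra. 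I also record that $t\mapsto F_t(x)$ is smooth with
\[
\partial_tF_t(x)=\frac1{\vol_k(M)}\int_M(\xi_m\varphi)\big((tm).x\big)\,d\vol_k(m),
\]
where $\xi_m$ is the (globally defined, smooth) vector field on $X$ generating $s\mapsto\exp(sm)$; for $m$ in the compact set $M$ the functions $\xi_m\varphi$ again lie in $\mathscr{C}^\infty_c(X)$, have vanishing $\mu$-integral, and have Sobolev norms $\ll_M$ those of $\varphi$.

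The essential input is quantitative mixing. Since every simple factor of $G$ has real rank $\ge2$, the group $G$ has property $(T)$ and the representation of $G$ on the orthogonal complement of the constants in $\Ltwo(X,\mu)$ has a strong spectral gap; combined with integration by parts in the $\mathscr{C}^\infty_c$-variables, this gives $\kappa>0$, $\ell\in\N$ and a fixed $\ell$-th order Sobolev norm $\|\cdot\|_\ell$ with
\[
\Big|\int_X\psi_1(a.x)\,\overline{\psi_2(x)}\,d\mu(x)-\int_X\psi_1\,d\mu\cdot\overline{\textstyle\int_X\psi_2\,d\mu}\Big|\;\ll_G\;\|\psi_1\|_\ell\,\|\psi_2\|_\ell\,e^{-\kappa\|a\|}
\]
for all $\psi_1,\psi_2\in\mathscr{C}^\infty_c(X)$ and $a\in\mf a$ — here one uses that the half-sum of positive roots is bounded below by a positive multiple of $\|\cdot\|$ on the closed dominant chamber, so $e^{-\rho(a^+)}\le e^{-c\|a\|}$. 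Iterating this (a product of left-translates of $\mathscr{C}^\infty_c$-functions is again such a function, with Sobolev norm controlled up to an $\mathrm{Ad}$-type exponential factor in the translation, and with integral vanishing once one factor has zero integral) yields effective multiple mixing: for each $r$ there are $\kappa_r>0$, $\ell_r\in\N$ such that the $r$-point correlation of left-translates of $\varphi$ (with $\int\varphi=0$) by $a_1,\dots,a_r\in\mf a$ is $\ll_{r,\varphi}1$ always and, once the $a_i$ are pairwise well separated, decays exponentially in $\min_{i\ne j}\|a_i-a_j\|$ at the cost of a factor polynomial in $\max_i\|a_i\|$.

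Next I run a moment computation. By $\mu$-invariance $\int_X\varphi(a.x)\overline{\varphi(b.x)}\,d\mu=c_\varphi(a-b)$ with $c_\varphi(a):=\int_X\varphi(a.x)\overline{\varphi(x)}\,d\mu$, so $\int_X|F_t|^2\,d\mu=\vol_k(M)^{-2}\int_{M\times M}c_\varphi(t(m-m'))\,d\vol_k(m)\,d\vol_k(m')$; splitting according to whether $\|m-m'\|\le\rho_t:=C\log t/t$ (measure $\ll_M(\log t/t)^k$, integrand $\le\|\varphi\|_{\Ltwo}^2$) or not (integrand $\ll e^{-\kappa t\rho_t}=t^{-\kappa C}$) and taking $C$ large gives $\int_X|F_t|^2\,d\mu\ll_{\varphi,M}t^{-k}(\log t)^{k}$, and likewise for $\partial_tF_t$. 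More importantly, for $p\in\N$ the same unfolding writes $\int_X|F_t|^{2p}\,d\mu$ as $\vol_k(M)^{-2p}$ times an integral over $M^{2p}$ of a $2p$-point correlation of left-translates of $\varphi,\overline\varphi$ by $tm_1,\dots,tm_p,tm_1',\dots,tm_p'$. By effective multiple mixing (with $\rho_t=C_p\log t/t$, $C_p$ large enough to absorb the polynomial-in-$t$ Sobolev losses) this correlation is negligible, of size $\ll_{p,\varphi}t^{-pk}$, unless each of the $2p$ parameters lies within $\rho_t$ of another; on that set it is $\ll_{p,\varphi}1$, and — crucially, since any configuration leaving even one parameter $\rho_t$-isolated contributes a factor $\int\varphi=0$ — the dominant contribution comes from the $\asymp(2p-1)!!$ perfect matchings of the $2p$ parameters into $\rho_t$-close pairs, a set of measure $\ll_p\vol_k(M)^p(\log t/t)^{pk}$. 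Hence
\[
\int_X|F_t|^{2p}\,d\mu\;\ll_{p,\varphi,M}\;t^{-pk}(\log t)^{pk},\qquad\text{and the same bound holds for }\partial_tF_t.
\]

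Finally I upgrade this to an almost-sure estimate by Borel--Cantelli. Fix $\delta>0$; it is enough to prove \eqref{eq:Birk-S-eff} with $\delta/2$ in place of $\delta$, the extra constant being absorbed into $T_0$. Choose $p\in\N$ with $p\delta>1$. By Chebyshev and the moment bound, $\mu(|F_n|>n^{-k/2+\delta/2})\ll_{p,\varphi,M}n^{-p\delta}(\log n)^{pk}$, which is summable over $n\in\N$, and by Minkowski's integral inequality the same holds for $\mu\big(\int_n^{n+1}|\partial_tF_t|\,dt>n^{-k/2+\delta/2}\big)$. Borel--Cantelli then supplies, for $\mu$-a.e.\ $x$, an $N(x)$ with $|F_n(x)|\le n^{-k/2+\delta/2}$ and $\int_n^{n+1}|\partial_tF_t(x)|\,dt\le n^{-k/2+\delta/2}$ for all $n\ge N(x)$; for $t\ge N(x)$ and $n=\lfloor t\rfloor$, the fundamental theorem of calculus gives $|F_t(x)|\le|F_n(x)|+\int_n^{n+1}|\partial_sF_s(x)|\,ds\le 2n^{-k/2+\delta/2}\le C_k\,t^{-k/2+\delta/2}\le t^{-k/2+\delta}$ once $t$ is large. \emph{The main obstacle is the moment estimate}: one must have effective \emph{multiple} mixing for the $\mf a$-action on $G/\Gamma$ at one's disposal (this is where the rank-$\ge2$ hypothesis enters), and then carry out the bookkeeping that isolates the perfect-matching contribution so as to reach the central-limit exponent $t^{-pk}$ rather than merely $t^{-k}$. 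The higher moments are genuinely needed: the second-moment bound is of the correct order $t^{-k}$, but in combination with any maximal inequality it loses a power of $t$ when $\delta$ is small and only yields an a.e.\ bound weaker than the stated $t^{-k/2+\delta}$; it is precisely the near-Gaussian decay of the moments of $F_t$ that closes this gap.
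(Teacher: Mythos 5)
Your overall strategy matches the paper's: bound high moments of the dilated-manifold average via effective multiple mixing, isolate the ``perfect matching'' contribution (the paper's Lemma~\ref{lem:4mom} does this rigorously via a covering lemma of Bj\"orklund--Gorodnik that partitions configurations by clustering scale), apply Chebyshev and Borel--Cantelli along a discrete sequence, then interpolate. Your interpolation is genuinely different: you bound moments of $\partial_t F_t$ and use the fundamental theorem of calculus, whereas the paper (Theorem~\ref{thm:2}) works along $t_n = n^{1-A}$, uses the Lipschitz regularity of $\varphi$ and of the action, and optimizes over $A$. Your route is cleaner for this specific theorem and avoids the optimization, at the cost of needing the moment bound for the extra family $\{\xi_m\varphi\}_{m\in M}$; the paper's Lipschitz route is more portable, which they exploit when they re-use the abstract Theorems~\ref{thm:genericgeneral}--\ref{thm:2} for the $U_{m,n}$-invariant measure in Theorem~\ref{cor:diophantine}.

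There is, however, a genuine gap in how you arrive at the effective multiple mixing input. You write that ``iterating'' the two-point spectral-gap estimate yields exponential decay of $r$-point correlations in the minimal separation, with only polynomial loss in $\max_i\|a_i\|$. That step is not a simple iteration. If you group the $r$ factors into two blocks and apply two-point mixing, the Sobolev norm of the product over the larger block carries a penalty $e^{c\cdot\operatorname{diam}}$ which is \emph{exponential} in the diameter of that block, while the gain from two-point mixing is only $e^{-\kappa\cdot\operatorname{gap}}$ across the split; for a roughly equilateral configuration these two exponentials compete and there is no reason for the decay to win, since $\kappa$ comes from the spectral gap while $c$ is dictated by the Sobolev order and $\operatorname{Ad}$. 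In particular, two-point mixing does not formally imply multiple mixing for $\R^d$-actions. The paper does not attempt this derivation; it cites \cite[Theorem~1.1]{BEG20} for effective $h$-point mixing of the $\mathfrak a$-action on $G/\Gamma$ (this is where the rank~$\ge 2$ hypothesis is used, via property~$(T)$ for each simple factor and the structure theory of \cite{BEG20}), and then uses the covering lemma to control Sobolev losses by always applying $\#\mathcal Q$-point mixing to suitably clustered blocks with intra-block radius $\alpha_l$ much smaller than inter-block separation $\beta_{l+1}$. If you replace your ``iteration'' with a citation of the multiple-mixing theorem as a black box, the remainder of your argument is sound and in fact tracks the paper's Lemma~\ref{lem:4mom} and Lemma~\ref{lem:BC1} closely.
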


Our next result establishes the genericity of a special class of measures supported on closed unipotent orbits of $G/\Gamma$ with $G=\SL_{d+1}(\R)$, $d \ge 2$, and $\Gamma$ --- a lattice in $G$. The case $\Gamma =\SL_{d+1}(\Z)$ is important for our main Diophantine application, since then the homogeneous space $X=G/\Gamma$ can be identified with the space of unimodular lattices in $\R^{d+1}$.

Fix $m,n\in\N$ such that $m+n = d+1$ and put
\begin{equation}\label{eq:umn}
    U_{m,n}:=\left\{\begin{pmatrix}
        I_{m}& Y\\
        0& I_{n}
    \end{pmatrix}: Y\in \R^{m\times n} \right\}\subset G.
\end{equation}
Let $A\subset G$ be the group of diagonal matrices and consider the following subset of $\mathfrak a = \Lie (A)$:
\begin{equation}
\label{eq:E}
E:=\big\{ a = (a_1,\dots,a_{m+n}) \in {\mathfrak a} : a_1,\dots,a_m > 0,\  a_{m+1},\dots,a_{m+n} < 0\big\}.    
\end{equation}
Then for the action of $\mathfrak a$ on $X$ in \eqref{eq:action} the following holds.

\begin{theorem}
\label{cor:diophantine} Let $\nu$ be the unique $U_{m,n}$-invariant Radon probability measure supported on a given compact orbit of $U_{m,n}$ in $X$. Then $\nu$ is $\big(E,k,\mathscr{C}_{c}^{\infty}(X)\big)$-generic for any $1\leq k \leq d-1$. Furthermore, for any {compact} $k$-dimensional $\mathscr{C}^1$ submanifold
$M\subset E$, any $\delta>0$, and any $\varphi\in \mathscr{C}_{c}^{\infty}(X)$, \eqref{eq:Birk-S-eff} holds for $\nu$-a.e $x\in X$ and $t\geq T_0=T_0(\delta, \varphi, M, x)$.  
 \end{theorem}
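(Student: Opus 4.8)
The plan is to reduce Theorem~\ref{cor:diophantine} to Theorem~\ref{thm:gmodgamma} via a comparison of the two measures $\mu$ (Haar on $X$) and $\nu$ (the $U_{m,n}$-invariant measure on a compact orbit), exploiting the fact that the cone $E$ consists precisely of those $a\in\mf a$ whose adjoint action expands $\Lie(U_{m,n})$. Concretely, I would first observe that for $a\in E$ and $t>0$ large, $\exp(ta)$ conjugates $U_{m,n}$ into a highly contracted neighborhood of the identity, so that pushing $\nu$ forward by $\exp(ta)$ makes it closer and closer to $\mu$ in a quantitative sense on smooth functions. This is a standard effective-equidistribution-of-translates argument: one writes $\int \varphi(\exp(ta).x)\,\dd\nu(x)$ as an integral over the compact $U_{m,n}$-orbit, changes variables $Y\mapsto \exp(ta)^{-1}\exp(ta)$ inside the unipotent, and uses smoothness of $\varphi$ together with the exponential contraction rate. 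The point is that for $a\in E$ the relevant eigenvalues of $\mathrm{Ad}(\exp(ta))$ on $\Lie(U_{m,n})$ are of the form $e^{t(a_i-a_j)}$ with $i\le m<j$, hence $a_i-a_j>0$, giving genuine contraction as $t\to\infty$.

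The key technical step is to upgrade this from a single $a$ to the whole dilated submanifold $tM$ with $M\subset E$ compact. Since $M$ is compact and contained in the \emph{open} cone $E$, there is a uniform $\kappa>0$ with $a_i-a_j\ge\kappa$ for all $a\in M$ and all $i\le m<j$; this uniformity is what makes the argument work. I would then estimate
\begin{equation*}
\left|\frac1{t^{k}\vol_k(M)}\int_{tM}\varphi(a.x)\,\dd\vol_k(a)-\int_X\varphi\,\dd\mu\right|
\end{equation*}
for $x$ in the compact $U_{m,n}$-orbit by combining: (a) Theorem~\ref{thm:gmodgamma} applied with the \emph{same} $M$ but to $\mu$-generic points $y\in X$, giving the bound $t^{-k/2+\delta}$ for $\mu$-a.e.\ $y$; and (b) a transfer principle showing that the set of $x$ in the $U_{m,n}$-orbit for which the $\nu$-average along $tM$ fails to track the $\mu$-average along $tM$ has $\nu$-measure zero. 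For (b) the natural tool is a maximal/Borel–Cantelli argument: fix a sequence $t_j\to\infty$ (say $t_j=(1+\eta)^j$), bound the second moment over the $U_{m,n}$-orbit of the difference between the $\nu$-sample and its $\exp(t_j\cdot)$-pushed average using the contraction above plus an $\Ltwo$ estimate coming from the effective mixing hypothesis (the same mixing input underlying Theorem~\ref{thm:gmodgamma}), sum over $j$, and interpolate between consecutive $t_j$ using continuity of the averages in $t$ (the $\mathscr{C}^1$ regularity of $M$ and smoothness of $\varphi$ control the derivative in $t$).

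Finally I would assemble the pieces: for $\nu$-a.e.\ $x$, the displayed quantity is at most the error in Theorem~\ref{thm:gmodgamma} at a comparison point (of size $t^{-k/2+\delta}$) plus the discrepancy between the $\nu$-average and the $\mu$-average along $tM$, which by step (b) is eventually smaller than $t^{-k/2+\delta}$ as well. Absorbing constants by slightly enlarging $\delta$ then yields \eqref{eq:Birk-S-eff} for all $t\ge T_0(\delta,\varphi,M,x)$. For the qualitative part — $\nu$ being $(E,k,\mathscr{C}^\infty_c(X))$-generic for $1\le k\le d-1$ — one simply takes $\delta$ fixed, e.g.\ $\delta=k/4$, so the right-hand side tends to $0$. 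I expect the main obstacle to be step (b): making the contraction estimate and the mixing-based $\Ltwo$ bound interact cleanly enough that the second-moment sum converges, while handling the fact that $\nu$ lives on a measure-zero subset of $X$ so one cannot directly invoke genericity of $\mu$; the compactness of both $M$ (inside the open cone $E$) and of the $U_{m,n}$-orbit are exactly the hypotheses that make this feasible, and I would lean on the same effective-mixing machinery that drives Theorem~\ref{thm:gmodgamma} rather than reproving it.
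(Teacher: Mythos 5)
Your high-level plan — reduce to Theorem~\ref{thm:gmodgamma} by comparing $\nu$ to $\mu$ via equidistribution of translates, then run a second-moment / Borel--Cantelli transfer — is genuinely different from what the paper does, and it has a real gap at exactly the step you flag as ``the main obstacle,'' namely step~(b).

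The paper's proof does not attempt to transfer from $\mu$-genericity at all. Instead it applies the abstract Theorem~\ref{thm:2} \emph{directly} to the measure $\nu$, with the effective multiple-mixing hypothesis \eqref{eq:4mix} supplied by a $\nu$-specific input: Theorem~\ref{BG23} (Björklund--Gorodnik), which gives exponential decay of $\nu$-correlations $\nu\big(\prod_i \varphi_i\circ\exp(a_i)\big)$ toward $\prod_i\mu(\varphi_i)$ for $a_1,\dots,a_h\in E$, with error controlled by $\overline\Delta$. The only geometric fact about $M$ that is then used is that, being compact inside the \emph{open} cone $E$, there is a constant $\sigma'$ with $\Delta(ta_1,\dots,ta_h)\le\sigma'\overline\Delta(ta_1,\dots,ta_h)$ uniformly in $t$, so the $\overline\Delta$-decay can be converted into the $\Delta$-decay required by \eqref{eq:4mix}. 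That is the entire content of the proof.

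Your step~(b), by contrast, proposes a second-moment estimate over the $U_{m,n}$-orbit ``using the same effective-mixing machinery that drives Theorem~\ref{thm:gmodgamma}.'' But that machinery is Theorem~\ref{BEG} (BEG20), which controls $\mu$-correlations, not $\nu$-correlations. Since $\nu$ is singular with respect to $\mu$, an $\Ltwo(\mu)$ maximal estimate says nothing about $\nu$-a.e.\ behaviour, and the second moment you need in step~(b) is an integral over the compact orbit with respect to $\nu$ — precisely the quantity that Theorem~\ref{BG23} is designed to control. Without a $\nu$-specific higher-order mixing input, the Borel--Cantelli sum in your step~(b) cannot be closed, and the reduction to Theorem~\ref{thm:gmodgamma} does not go through. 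The qualitative equidistribution of the pushed measures $\exp(ta)_*\nu\to\mu$ (which your step~(a) implicitly relies on) is also not enough by itself; it gives convergence of averages over the orbit for each fixed $a$, not $\nu$-a.e.\ pointwise convergence of integrals over dilates $tM$.

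A minor but telling secondary issue: you assert that for $a\in E$ the adjoint action of $\exp(ta)$ on $\Lie(U_{m,n})$ has eigenvalues $e^{t(a_i-a_{m+j})}$ with $a_i-a_{m+j}>0$ ``giving genuine contraction as $t\to\infty$.'' Positive exponents mean these eigenvalues blow up, i.e.\ $E$ is the \emph{expanding} cone for $U_{m,n}$. That expansion is indeed the mechanism behind equidistribution of translates, but the rest of your heuristic (``conjugates $U_{m,n}$ into a highly contracted neighborhood'') is backwards. This does not by itself sink the approach, but it signals that the change-of-variables step would need to be redone carefully — and in any case, it does not repair the missing $\nu$-mixing input.
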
 

In the next subsection we present an application of Theorem \ref{cor:diophantine} to Diophantine approximation.
 
\subsection{Uniform approximation in the multiplicative set-up}
 For positive integers $m,n$,
 a function $\psi:{[1,\infty)\to (0,\infty)}$, and 
 $T> 1$ consider the following system of inequalities: \begin{equation}
\label{eq:systempsi}
\begin{cases}
\max_{i=1}^{m}|Y_{i}\bs{q}-p_{i}|^{m} < \psi(T) \\
\max_{j=1}^{n}|q_j|^{n} < T
\end{cases},
\end{equation}
where $Y_{i}$ for $i=1,\dotsc,m$ denote the rows of the matrix $Y\in\R^{m\times n}$. Let
\begin{equation}
\label{eq:adds}
{\mc{S}_{m,n}(
\psi,T):= \\
\left\{Y\in \mb{R}^{m\times n}:\exists\,\bs{p}\in\mb{Z}^{m},\, \bs{q}\in\mb{Z}^{n}\setminus\{\bs{0}\}
\text{ such that }\eqref{eq:systempsi}\text{ holds}
\right\}\nonumber}
\end{equation}
and let $\psi_1$ denote the function $x\mapsto 1/x$ for $x\in(1,\infty)$. 
Dirichlet's Theorem in Diophantine approximation states that
\begin{equation}
\label{eq:Dirt}
\mc{S}_{m,n}\left(c\psi_1,T\right)=\mb R^{m\times n}\quad\mbox{for all }c>1\mbox{ and }T>1;
\end{equation}
in other words, the system 
\begin{equation}
\label{eq:system}\tag{$c$-DI}
{\begin{cases}
\max_{i}|Y_{i}\bs{q}-p_{i}|^{m} < c/T\\
\max_{j}|q_j|^{n} < T
\end{cases}}
\end{equation}
has a non-trivial integer solution for any $Y\in \mb{R}^{m\times n}$ and any $T > 1$, as long as $c > 1$.
This result is known to be optimal in the following sense: if $c<1$,
the set of $c$-\textsl{Dirichlet-Improvable} matrices
\begin{equation*}
\label{eq:dimpmeas0}
\begin{aligned}
\liminf_{T\to\infty} \mc{S}_{m,n}\left(c\psi_1,T\right) &=      \bigcup_{{T_0 > 1}}\bigcap_{T\geq T_0}\mc{S}_{m,n}\left(c
\psi_1,T\right)    \\ 
& = \left\{Y\in \mb{R}^{m\times n} \left|\begin{aligned} \eqref{eq:system} \text{ has a non-trivial integer }\\\text{solution for all \textsl{large enough} }T\ \end{aligned}\right.\right\}
\end{aligned}
\end{equation*}
has Lebesgue measure zero \cite{DS2, KW08}. 
More generally, for a function $\psi:(1,\infty)\to[0,\infty)$ one may introduce the notion of a \textsl{$\psi$-Dirichlet-Improvable} or \textsl{$\psi$-Dirichlet} matrix $Y\in \operatorname{D}_{m,n}(\psi)$ (see \cite[\S 4]{KW18}), where
$$
\operatorname{D}_{m,n}
(\psi):=\liminf_{T\to\infty} \mc{S}_{m,n}\left(\psi,T\right)=      \bigcup_{{T_0 > 1}}\bigcap_{T\geq T_0}\mc{S}_{m,n}\left(\psi,T\right).$$
In \cite{KW18, KSY22}, {under some additional monotonicity assumptions on $\psi$,} it was shown that, depending on the convergence or the divergence of a certain sum defined in terms of the function $\psi$, the set of $\psi$-Dirichlet matrices has either full or zero Lebesgue measure in $\mb R^{m\times n}$ (although for $\max\{m,n\}>1$ the dichotomy is not complete). {Such statements are known in the literature as "Khintchine-type theorems"}. A related notion is that of singular {matrices}. $Y\in \mb R^{m\times n}$ is said to be \textsl{singular} (notation: $Y\in\operatorname{Sing} _{m,n}$) if it is {$c\psi_1$}-Dirichlet-Improvable for any $c > 0$. In other words, one has that
$$\operatorname{Sing} _{m,n} := \bigcap_{c > 0}\UA_{m,n}
(c\psi_1) =\bigcap_{c > 0} \left\{Y\in \mb{R}^{m\times n} \left|\begin{aligned} \eqref{eq:system} \text{ has a non-trivial integer }\\\text{solution for all {large enough} }T\ \end{aligned}\right.\right\}.$$
The set $\operatorname{Sing}_{m,n}$ has been the object of extensive work. {It coincides with $\mb Q$ when both $m$ and $n$ are equal  to $1$}, and is known to have Hausdorff dimension equal to $mn-\frac{mn}{m+n}$ {when $\max\{m,n\} > 1$} \cite{Che11,CC16,KKLM17,DFSU}.  

The notions of Dirichlet-improvability and singularity belong to the area of \textsl{uniform Diophantine approximation}. The word "uniform" here signifies that the sets under consideration are defined by "liminf conditions". The counterpart area of \textsl{asymptotic approximation} concerns instead "limsup sets", such as
\begin{equation}\label{eq:asymp}
\nonumber
\begin{aligned}
\operatorname{W}_{m,n}(\psi)&:=\limsup_{T\to\infty} \mc{S}_{m,n}\left(\psi,T\right) \\
&= \left\{Y\in \mb{R}^{m\times n}\ \left|\begin{aligned}
\ \eqref{eq:systempsi}\text{ has a nontrivial integer solution}\\ \text{for an \textsl{unbounded} set of }T>1\quad\end{aligned}\right.
\right\}.
\end{aligned}
\end{equation} 
{Note that, under the assumption that \eq{monotonicity}{\psi\text{ is continuous, non-increasing and satisfies }\lim_{T\to\infty}\psi(T) =0,} one can equivalently write 
\begin{equation*}
\operatorname{W}_{m,n}(\psi)=\left\{Y\in \mb{R}^{m\times n}\ \left|\begin{aligned}
\max_{i}|Y_{i}\bs{q}-p_{i}|^{m} &< \psi\left(
\max_{j}|q_j|^{n}\right)\\ \text{ for infinitely many }\bs{q}&\in\Z^n\text{ and some }\bs{p}\in\Z^m\end{aligned}\right.\right\}.
\end{equation*}}
These sets are part of the classical theory of Diophantine approximation, {with definitive results on Lebesgue measure established by Khintchine, Groshev, and Schmidt 
(the Khintchine--Groshev Theorem, see \cite{Gr38, schmidt})}.

While \eqref{eq:systempsi} and the related
sets $\UA_{m,n}(\psi)$ and $\operatorname{W}_{m,n}(\psi)$ are at present relatively well-understood, it appears substantially more difficult to 
study systems of inequalities defined instead in terms of products. {This gives rise to what is known in the literature as "multiplicative Diophantine approximation".}
Namely, for ${T> 1}$ and $\psi$ as above, one can define
\begin{equation}
{\mc{S}_{m,n}^{\times}(
\psi,T):= \\
\left\{Y\in \mb{R}^{m\times n}:\exists\,\bs{p}\in\mb{Z}^{m},\, \bs{q}\in\mb{Z}^{n}\setminus\{\bs{0}\}\mbox{ s.t.}\begin{cases}
\prod_{i=1}^{m}|Y_{i}\bs{q}-p_{i}|< \psi(T) \\
\Pi_{+}(\bs{q})< T
\end{cases}
\right\}\nonumber},
\end{equation}
where $\Pi_{+}(\bs x):=\prod_{i=1}^{n}\max\{1,|x_i|\}$ for any $\bs x\in\mb R^n$, and consider the set of  \textsl{multiplicatively $\psi$-approximable} matrices
\eq{psima}{\operatorname{W}^{\times}_{m,n}(\psi):=\limsup_{T\to\infty} \mc{S}^{\times}_{m,n}\left(\psi,T\right).}
{Observe that, under Assumption \equ{monotonicity}, these can be equivalently defined as}
\begin{equation*}
{\operatorname{W}^\times_{m,n}(\psi)=\left\{Y\in \mb{R}^{m\times n}\ \left|\begin{aligned}
\prod_{i=1}^{m}|Y_{i}\bs{q}-p_{i}| &< \psi\big(
\Pi_{+}(\bs{q})\big)\\ \text{ for infinitely many }\bs{q}&\in\Z^n\text{ and some }\bs{p}\in\Z^m\end{aligned}\right.\right\}.}
\end{equation*}
Clearly, the sets $\operatorname{W}_{m,n}(\psi)$ and $\operatorname{W}^{\times}_{m,n}(\psi)$ are the same when $m=n=1$; thus, in what follows, we shall assume that $\max\{m,n\} > 1$. 
This way, instead of \eqref{eq:system} one is led to consider the following system of inequalities:
\begin{equation}
\label{eq:systemmult}\tag{$c$-DI${}^\times$}
\begin{cases}
\prod_{i=1}^{m}|Y_{i}\bs{q}-p_{i}| < c/T,\\
\Pi_{+}(\bs{q})< T.
\end{cases}.
\end{equation}
Such a modification has its origin in a question posed by J.E.\ Littlewood around the third decade of the last century, on whether the set of \textsl{multiplicatively well-approximable} matrices
\begin{equation}
\nonumber
\label{eq:littlewood}
\begin{aligned}
\operatorname{WA}^{\times}_{m,n}&:=\bigcap_{c>0}\operatorname{W}_{m,n}^{\times}(c\psi_1)
\\    
& = \bigcap_{c>0}\left\{Y\in \mb{R}^{m\times n}\  \left|\begin{aligned} \eqref{eq:systemmult} \text{ has a non-trivial integer }\quad\\\text{ solution for an unbounded set of }{T>1} \end{aligned}\right.\right\}\\
&     = \left\{Y\in \mb{R}^{m\times n}:\inf_{\bs{q}\in\Z^n\setminus\{\bs{0}\}}\left( \prod_{i=1}^{m} \dist(Y_{i}\bs{q},\Z)\cdot \Pi_{+}(\bs{q}) \right)= 0
\right\}
\end{aligned}
\end{equation}
coincides with the whole space $\mb R^{m\times n}$. It should be remarked that the original conjecture refers 
to the case $m=2$, $n=1$. 

{Multiplicative Diophantine approximation}
has seen renewed interest in recent years (see, e.g., \cite{Bug09,BV11,Bad13,BB22}), due especially to the significant breakthrough {concerning the set of exceptions to Littlewood's Conjecture} achieved in \cite{EKL06}. One of the main goals of this paper is to 
develop a theory of uniform multiplicative approximation,
which appears not to have been treated in the literature. To this end, we let $\UA^{\times}_{m,n}(\psi)$ denote the set of  $\psi$-\textsl{multiplicatively-Dirichlet} matrices, defined as
\eq{psidir}{\UA^{\times}_{m,n}(\psi):=\liminf_{T\to\infty} \mc{S}^{\times}_{m,n}\left(\psi,T\right) = \bigcup_{{T_0> 1}}\bigcap_{T\geq T_0}\mc{S}^{\times}_{m,n}(\psi,T).}
Observe that, trivially,
\begin{equation*}\label{containment}\mc{S}_{m,n}(\psi,T)\subset \mc{S}_{m,n}^{\times}(\psi,T)\text{ for all }T
.\end{equation*}
{Hence, by \eqref{eq:Dirt},} we have that
\begin{equation}
\label{eq:1imsees}
c>1\;\Longrightarrow\;\mc S^{\times}_{m,n}(c\psi_1,T)=\mb R^{m\times n}\text{ for all }T>1\;\Longrightarrow\;\UA^{\times}_{m,n}(c\psi_1)=\mb R^{m\times n}.
\end{equation}
{In other words}, an analog of Dirichlet's Theorem holds in the multiplicative set-up. Based on this observation, it is natural to ask whether \eqref{eq:1imsees} is optimal, both in the set-theoretic and in the measure-theoretic sense. More precisely:  

\begin{quest}
\label{quest:1}
Does the set $\UA^{\times}_{m,n}(c\psi_1)$ have a non-empty complement for any $c<1$?
\end{quest}

\begin{quest}
\label{quest:2}
Does the set $\UA^{\times}_{m,n}(c\psi_1)$ have Lebesgue measure zero for any $c < 1$?
\end{quest}

In this paper we answer both questions in the negative. Our first result in this sense reads as follows.

\begin{prop}
\label{prop:dno}
For all integers $m,n\geq 1$, 
$$\UA^{\times}_{m,n}(c\psi_1)=\mb R^{m\times n} \;\text{ for }\;c>\frac{m! n!}{m^m n^n}.$$
\end{prop}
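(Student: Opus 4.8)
The plan is to prove the statement directly by producing, for every $Y \in \mathbb{R}^{m\times n}$ and every $T>1$, an explicit non-trivial integer solution $(\boldsymbol{p},\boldsymbol{q})$ of \eqref{eq:systemmult} with $c = \frac{m!n!}{m^mn^n}$; in fact, by \eqref{eq:1imsees}-style reasoning, it suffices to find one valid $(\boldsymbol{p},\boldsymbol{q})$ for \emph{each} $T>1$ with $c/T$ in the product bound, which immediately gives $\mc{S}^\times_{m,n}(c\psi_1,T) = \mathbb{R}^{m\times n}$ for all $T$ and hence $\UA^\times_{m,n}(c\psi_1) = \mathbb{R}^{m\times n}$. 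The natural tool is Minkowski's theorem on convex bodies (or equivalently Minkowski's linear forms theorem): given the $m$ linear forms $L_i(\boldsymbol{q}) = Y_i\boldsymbol{q} - p_i$ in the $d+1 = m+n$ integer variables $(\boldsymbol{p},\boldsymbol{q})$, one looks for a non-zero lattice point in a suitable convex symmetric region. The twist compared with the classical (max-norm) Dirichlet theorem is that the target region is defined by a product inequality $\prod_i |L_i| < c/T$ rather than $\max_i |L_i|^m < c/T$, so one should not use a box but rather the region $\{\prod_{i=1}^m |x_i| \le \rho^m\} \times \{\max_j |q_j| \le \sigma\}$ or, better, a region adapted to $\Pi_+(\boldsymbol{q})$.

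The key computation is the volume of the relevant body. First I would reduce to a clean statement: for the set $R = \{(\boldsymbol{x},\boldsymbol{y}) \in \mathbb{R}^m\times\mathbb{R}^n : \prod_{i=1}^m |x_i| \le a, \ \max_j |y_j| \le b\}$ one has $\vol_{m+n}(R) = \infty$, so that is the wrong body; instead the correct object (as in the standard proof that the "multiplicative" constant is $\frac{m!n!}{m^mn^n}$) is the region where $\prod |x_i|$ is controlled via an $\ell^1$-type constraint. Concretely, by the AM--GM inequality $\prod_{i=1}^m |x_i| \le \big(\tfrac1m\sum_i |x_i|\big)^m$, so the simplex-type body $S = \{\sum_{i=1}^m |x_i| \le m\rho\} \times \{\max_j |y_j| \le \sigma\}$ is contained in $\{\prod_i|x_i| \le \rho^m\}\times\{\max_j|y_j|\le\sigma\}$, and $S$ is convex, symmetric, with $\vol_m$ of the $\ell^1$-ball of radius $m\rho$ equal to $\frac{(2m\rho)^m}{m!}$ and $\vol_n$ of the cube equal to $(2\sigma)^n$. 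So $\vol_{m+n}(S) = \frac{(2m\rho)^m}{m!}(2\sigma)^n = 2^{m+n}\frac{m^m}{m!}\rho^m\sigma^n$. Minkowski's theorem (the lattice here is $\mathbb{Z}^{m+n}$, covolume $1$, and $S$ is a convex symmetric body) guarantees a non-zero integer point in $S$ as soon as $\vol_{m+n}(S) \ge 2^{m+n}$, i.e. as soon as $\frac{m^m}{m!}\rho^m\sigma^n \ge 1$. I would then need to handle the $\Pi_+(\boldsymbol{q})$ bound: if $\boldsymbol{q}\ne 0$ the point is non-trivial and $\Pi_+(\boldsymbol{q}) \le (\sigma+1)^n$ if all $|q_j|\le \sigma$, but more care is needed to pass from $\max_j|q_j|\le\sigma$ to $\Pi_+(\boldsymbol{q})<T$ — here one should instead set up the $\boldsymbol{q}$-side constraint as a simplex $\sum_j|q_j| \le n\tau$ so that $\Pi_+(\boldsymbol{q}) \le \prod_j\max\{1,|q_j|\}$ is bounded using AM--GM again, yielding the factor $\frac{n^n}{n!}$ and hence the product constant $\frac{m!n!}{m^mn^n}$. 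One also has to rule out the degenerate case $\boldsymbol{q} = \boldsymbol{0}$ (which would force some $p_i \ne 0$ but then $|L_i| = |p_i| \ge 1$, contradicting the product bound once $c/T < 1$, i.e. for $T$ large — and for small $T$ one argues separately or notes $\mc{S}^\times$ is monotone in $T$), so the solution produced is genuinely non-trivial.

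Carrying this out precisely, I would: (1) fix $Y$ and $T$; (2) choose $\rho,\tau>0$ with $\rho^m = \frac{c}{T}\cdot(\text{appropriate normalization})$ and $\tau^n$ proportional to $T$, arranged so that both the product-bound $\prod_i|L_i| < c/T$ and the size-bound $\Pi_+(\boldsymbol{q}) < T$ are strict once we are inside the open simplex; (3) apply Minkowski's convex body theorem to the convex symmetric body $S = \{(\boldsymbol{x},\boldsymbol{q}): \sum_i|x_i|\le m\rho,\ \sum_j|q_j|\le n\tau\}$ in $\mathbb{R}^{m+n}$, whose volume is $2^{m+n}\frac{m^mn^n}{m!n!}\rho^m\tau^n$, checking that with $c = \frac{m!n!}{m^mn^n}$ and the above choice of $\rho,\tau$ we get $\vol_{m+n}(S) \ge 2^{m+n}$ (or slightly exceed it, to get strictness / a non-zero point, using the standard closed-body version of Minkowski for the boundary case); (4) translate back: the lattice point gives $(\boldsymbol{p},\boldsymbol{q})$ with $\prod_i|Y_i\boldsymbol{q}-p_i| \le \prod_i |x_i| \le (\frac1m\sum|x_i|)^m \le \rho^m$ and $\Pi_+(\boldsymbol{q}) \le \rho$-type bound $\le \tau^n$-derived quantity $< T$; (5) verify non-triviality as above. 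The main obstacle — and the only genuinely delicate point — is the precise bookkeeping at the boundary of Minkowski's theorem and the passage between $\max$-norm, $\ell^1$-norm (via AM--GM), and the $\Pi_+$ functional, together with confirming that the constant that comes out is exactly $\frac{m!n!}{m^mn^n}$ and not something larger; the topological/measure-theoretic content is otherwise entirely classical.
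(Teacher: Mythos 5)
Your plan is essentially the paper's (which, as it notes, follows an argument of Schmidt): apply Minkowski's convex body theorem to the double-simplex $\{\sum_i|x_i|\le m\rho\}\times\{\sum_j|q_j|\le n\tau\}$ with $\rho^m\tau^n = c$, compute its volume as $2^{m+n}\tfrac{m^m n^n}{m!n!}c\ge 2^{m+n}$, and recover the product bounds via AM--GM on each factor. The key idea and the resulting constant are both right.

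However, three details in your sketch are genuinely off and would need repair along the lines the paper actually follows. First, $\mc{S}^\times_{m,n}(\psi,T)$ is \emph{not} monotone in $T$ (larger $T$ relaxes the $\Pi_+$ constraint but tightens the product constraint $\prod_i|Y_i\bs q-p_i|<c/T$), so your fallback for small $T$ fails; one must instead use that $\UA^\times_{m,n}$ is a $\liminf$ set and restrict to $T\ge T_c:=\max\{cm^m,\,n^{n(n-1)}\}$, which is what the paper does, so aiming at all $T>1$ is both unnecessary and unachievable. Second, your argument ruling out $\bs q=\bs 0$ (``some $|p_i|\ge 1$ contradicts the product bound'') breaks when some other $p_j=0$, since then $\prod_i|p_i|=0<c/T$ and there is no contradiction; the correct argument uses the $\ell^1$ constraint directly: if $\bs q=\bs 0$ then $1\le\sum_i|p_i|\le m(c/T)^{1/m}$, forcing $T\le cm^m$. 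Third, and most substantively, converting $\sum_j|q_j|\le n\tau$ into $\Pi_+(\bs q)<T$ via AM--GM is only immediate when all $q_j\ne 0$. If exactly $k<n$ entries of $\bs q$ are nonzero, one must apply AM--GM to those $k$ entries alone, obtaining $\Pi_+(\bs q)\le(n/k)^k T^{k/n}$, and ensuring this is $\le T$ for every $1\le k\le n-1$ is precisely what forces the threshold $T\ge n^{n(n-1)}$. Your sketch (``$\tau^n$-derived quantity $<T$'') passes over this, and the naive estimate $\Pi_+(\bs q)\le(\tau+1)^n$ is useless since it always exceeds $T=\tau^n$.
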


We are unable to further improve upon the constant $(m!n!)/(m^m n^n)$ for arbitrary matrices $Y$ {(note that the bound is optimal when $m=n=1$)}. However, we can show that, {unless $m=n=1$,}  \eqref{eq:1imsees} is very far from optimal in the measure-theoretic sense. Indeed, the following theorem follows as an application of Corollary \ref{cor:diophantine}.
\begin{theorem}\label{thm:special}
Let $\max\{m,n\} > 1$. Then for any $c>0$
$$\operatorname{Leb}\big(\UA^{\times}_{m,n}(c\psi_1)\big)=1.$$ 
\end{theorem}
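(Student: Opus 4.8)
The plan is to derive Theorem~\ref{thm:special} from Theorem~\ref{cor:diophantine} through the Dani correspondence. Take $G=\SL_{d+1}(\R)$ and $\Gamma=\SL_{d+1}(\Z)$, so that $X=G/\Gamma$ is the space of unimodular lattices in $\R^{d+1}$ with $d+1=m+n$, and to $Y\in\R^{m\times n}$ attach the lattice $x_Y:=u_Y\Z^{d+1}$, where $u_Y\in U_{m,n}$ is as in \eqref{eq:umn}. Fix $c>0$; by \eqref{eq:1imsees} we may assume $c<1$, and put $r_0:=c^{1/m}<1$. Let $U_{r_0}:=\{\Lambda\in X:\Lambda\cap B_\infty(\bs{0},r_0)\ne\{\bs{0}\}\}$; this is an open subset of $X$ with compact complement (Mahler's criterion), hence of positive $\mu$-measure, so we may fix $0\le\varphi\in\mathscr{C}_{c}^{\infty}(X)$ with $\operatorname{supp}\varphi\subset U_{r_0}$ and $\int_X\varphi\,\dd\mu>0$. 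Finally, because $\max\{m,n\}>1$, the affine slice $\mathcal H:=\{a\in\mathfrak a:\textstyle\sum_{i=1}^m a_i=1\}$ meets $E$ in a nonempty open subset of $\mathcal H$ of dimension $d-1\ge1$; we fix a closed round ball $M\subset\mathcal H\cap E$ of dimension $k:=d-1$, which is a compact $\mathscr C^1$ submanifold (with boundary) of $\mathfrak a\cong\R^d$ contained in $E$.

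The crux is a reformulation of $\psi_1$-multiplicative-Dirichletness in terms of the action: \emph{if for every sufficiently large $t>0$ there is $a\in tM$ with $a.x_Y\in U_{r_0}$, then $Y\in\UA^\times_{m,n}(c\psi_1)$.} To see this, fix such a $t$ and $a$ and write a nonzero vector of $a.x_Y=\exp(a)u_Y\Z^{d+1}$ of sup-norm $<r_0$ as $\exp(a)u_Y(-\bs{p},\bs{q})^{\mathsf{T}}$ with $\bs{p}\in\Z^m$, $\bs{q}\in\Z^n$. Since $a\in tM\subset t\mathcal H$ and $\operatorname{tr}(a)=0$, we have $\sum_{i\le m}a_i=t$ and $\sum_{j\le n}a_{m+j}=-t$; comparing coordinates, $|Y_i\bs{q}-p_i|<e^{-a_i}r_0$ for $i\le m$ and $|q_j|<e^{-a_{m+j}}r_0$ for $j\le n$. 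Multiplying out, $\prod_{i\le m}|Y_i\bs{q}-p_i|<r_0^m e^{-t}=ce^{-t}$; and since each $q_j$ is an integer and $r_0<1$, we get $\max\{1,|q_j|\}<e^{-a_{m+j}}$ for all $j$, whence $\Pi_{+}(\bs{q})<e^{t}$. Moreover $\bs{q}\ne\bs{0}$: otherwise $\bs{p}\ne\bs{0}$, so some $|p_i|\ge1$, forcing the $i$-th coordinate of the vector to have absolute value $e^{a_i}|p_i|\ge e^{a_i}>r_0$, a contradiction. Hence $Y\in\mc{S}^{\times}_{m,n}(c\psi_1,e^t)$; as $t$ ranges over a half-line, so does $e^t$, and therefore $Y\in\UA^\times_{m,n}(c\psi_1)$.

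It remains to check the hypothesis of this reformulation for Lebesgue-almost every $Y$, and this is where Theorem~\ref{cor:diophantine} enters. The orbit $\mathcal O:=U_{m,n}\cdot\Z^{d+1}$ is compact, since $U_{m,n}\cap\SL_{d+1}(\Z)=\{u_N:N\in\Z^{m\times n}\}$ is a cocompact lattice in $U_{m,n}\cong\R^{mn}$, and its invariant probability measure $\nu$ is the pushforward of Lebesgue measure under $Y\mapsto x_Y$, $Y\in\R^{m\times n}/\Z^{m\times n}$. By Theorem~\ref{cor:diophantine} (only the qualitative $\big(E,k,\mathscr{C}_{c}^{\infty}(X)\big)$-genericity of $\nu$ is needed, not the effective rate), for $\nu$-a.e.\ $x\in\mathcal O$,
\[
\frac1{t^{k}\vol_k(M)}\int_{tM}\varphi(a.x)\,\dd\vol_k(a)\;\longrightarrow\;\int_X\varphi\,\dd\mu>0\qquad\text{as }t\to\infty.
\]
Thus for all large $t$ (depending on $x$) the left-hand side is positive, so $a.x\in\operatorname{supp}\varphi\subset U_{r_0}$ for some $a\in tM$. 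Applying the reformulation with $x=x_Y$, we conclude that $Y\in\UA^\times_{m,n}(c\psi_1)$ for $\nu$-a.e.\ such $Y$, i.e.\ for Lebesgue-a.e.\ $Y$ modulo $\Z^{m\times n}$; since $\UA^\times_{m,n}(c\psi_1)$ is invariant under $Y\mapsto Y+\Z^{m\times n}$, it has full Lebesgue measure.

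The substantive work is entirely contained in Theorem~\ref{cor:diophantine}; the one point in this deduction requiring genuine care is placing $M$ inside the hyperplane slice $\mathcal H\cap E$. This pins $\sum_{i\le m}a_i$ to the dilation parameter $t$ for every $a\in tM$, which is exactly what promotes a statement valid ``for all large $t$'' to one valid ``for all large $T$'' — that is, to a truly \emph{uniform} Diophantine conclusion. (Were $M$ merely contained in $E$, the value $e^{\sum_{i\le m}a_i}$ realized at the point of $tM$ entering $U_{r_0}$ could vary erratically with $t$ and fail to sweep out a half-line.) The remaining ingredients — ensuring $\bs{q}\ne\bs{0}$ and the strictness of the bound on $\Pi_{+}(\bs{q})$ — are routine bookkeeping; and the existence of a positive-dimensional $M\subset\mathcal H\cap E$, equivalently $d\ge2$, is precisely the hypothesis $\max\{m,n\}>1$.
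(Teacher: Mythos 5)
Your proof is correct, and it follows the same overall strategy as the paper: take a compact piece of the slice $E_1 = \{a \in E : \sum_{i\le m} a_i = 1\}$, invoke the qualitative genericity from Theorem~\ref{cor:diophantine} with a test function $\varphi\geq 0$, $\int\varphi\,\dd\mu>0$, supported in a cuspidal neighborhood, and conclude that for a.e.\ $x_Y$ the dilated slice $tE_1.x_Y$ meets that neighborhood for all large $t$. The one genuine difference is how you convert ``$\delta(a.x_Y)$ small for some $a\in tE_1$'' back into a $c\psi_1$-Dirichlet statement. The paper routes this step through the multiplicative Dani correspondence (Proposition~\ref{prop:C2}) together with the Transference Lemma~\ref{lem:transference} and Corollary~\ref{cor:sing}(a), thereby proving at once that $\operatorname{Sing}^\times_{m,n}$ has full measure and setting up machinery it reuses to restate Question~\ref{quest:3} dynamically. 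You instead bypass that apparatus and unwind the condition $\delta(a.x_Y)<c^{1/m}$, $a\in tE_1$, directly into an integer solution of \eqref{eq:systemmult} with parameter $T=e^t$; this one-directional implication is weaker than the sharp Transference Lemma (your threshold $c^{1/m}$ is smaller than the $c^{(m+n-1)/(m(m+n))}$ from Lemma~\ref{lem:transference}(b)), but since $r_0$ is freely adjustable this loss is harmless, and the resulting argument is shorter and entirely self-contained for this particular theorem. Your observation that pinning $M$ to the affine slice $\mathcal{H}\cap E = E_1$ is exactly what upgrades the conclusion from ``unbounded set of $T$'' to ``all large $T$'' is the same structural point the paper makes by working with $C_0(t)$.
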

We also establish a 
{Khintchine-type theorem}
for special functions $\psi_{1,\lambda}$ as follows. 
\begin{theorem}
\label{thm:main}
Let $\max\{m,n\} > 1$, and let $\psi_{1,\lambda}(x):=x^{-1}\cdot(\log x)^{-\lambda}$ for $\lambda>0$. Then
$$\operatorname{Leb}\left(\UA^{\times}_{m,n}(\psi_{1,\lambda})\cap[0,1)^{mn}\right)=\begin{cases}
0 & \mbox{for }\lambda>m+n-2, \\
1 & \mbox{for }\lambda<m+n-2.
\end{cases}$$
\end{theorem}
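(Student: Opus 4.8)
The plan is to treat the two halves separately, the convergence half by a Borel–Cantelli argument and the divergence half by reducing to Theorem \ref{thm:special} / Corollary \ref{cor:diophantine}. For the convergence part ($\lambda > m+n-2$), I would bound the Lebesgue measure of a single set $\mc S^\times_{m,n}(\psi_{1,\lambda},T)\cap[0,1)^{mn}$ by a volume computation: for a fixed $\bs q\in\Z^n\setminus\{0\}$, the set of $Y\in[0,1)^{mn}$ with $\prod_{i=1}^m\dist(Y_i\bs q,\Z)<\psi_{1,\lambda}(\Pi_+(\bs q))$ has measure $\ll \psi_{1,\lambda}(\Pi_+(\bs q))\cdot(\log \Pi_+(\bs q))^{m-1}$ (the logarithmic loss coming from the standard "$\prod x_i < \varepsilon$ has measure $\varepsilon(\log 1/\varepsilon)^{m-1}$" estimate). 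Summing over $\bs q$ with $\Pi_+(\bs q)$ in dyadic ranges, and using that the number of $\bs q\in\Z^n$ with $\Pi_+(\bs q)\le N$ is $\ll N(\log N)^{n-1}$, one gets that $\sum_T \operatorname{Leb}\big(\mc S^\times_{m,n}(\psi_{1,\lambda},T)\big)$ converges precisely when $\lambda > m+n-2$; since $\UA^\times_{m,n}(\psi_{1,\lambda})\subset \operatorname{W}^\times_{m,n}(\psi_{1,\lambda})$ and the latter is a limsup set, Borel–Cantelli gives measure zero. (One must be slightly careful that $\UA$ is a liminf of the $\mc S^\times$, hence contained in any of them for $T$ large, but the cleanest route is the containment in the limsup set $\operatorname{W}^\times$.)

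For the divergence part ($\lambda < m+n-2$) I would \emph{not} try a direct Borel–Cantelli-type lower bound, but instead observe that $\psi_{1,\lambda}(x) = x^{-1}(\log x)^{-\lambda} \ge c\psi_1(x)$ fails for every fixed $c$, so Theorem \ref{thm:special} does not apply verbatim; rather, one should rerun the dynamical argument behind Theorem \ref{thm:special} with the sharper rate. Concretely, $Y\notin\UA^\times_{m,n}(\psi_{1,\lambda})$ translates, via the Dani correspondence, into the statement that for the orbit of the lattice $u_Y\Z^{d+1}$ under the diagonal flow, a certain family of shrinking (as $t\to\infty$) target neighborhoods of the cusp is entered along an unbounded set of times $t$ in a prescribed cone of directions; the shrinking rate of the targets is governed by $\lambda$. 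The point is that Corollary \ref{cor:diophantine} — genericity of the $U_{m,n}$-invariant measure $\nu$ along submanifolds of the cone $E$ — combined with the effective error estimate \eqref{eq:Birk-S-eff}, lets one show that for $\nu$-a.e.\ starting point the averaged time spent near the cusp along dilated submanifolds matches the invariant-measure prediction with a power-saving error, and one then integrates out $Y$ (using that the relevant measure class on $[0,1)^{mn}$ is absolutely continuous with respect to $\nu$-translates, as in the derivation of Theorem \ref{thm:special} from Corollary \ref{cor:diophantine}) to conclude full measure in the divergence range. The exponent $m+n-2$ appears because the cusp excursion cross-section has the same dimension budget that produced the logarithmic exponents in the convergence computation, so the two halves meet at $\lambda = m+n-2$.

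The main obstacle, I expect, is the divergence half: unlike the convergence half, which is a purely classical volume-plus-Borel–Cantelli estimate, the divergence half needs a genuinely quantitative input to beat the borderline function $x^{-1}(\log x)^{-\lambda}$, since a crude ergodic-theoretic argument would only handle $c\psi_1$ (i.e.\ $\lambda=0$). One has to make the logarithmic gain explicit: the iterated-logarithm-type savings in the cusp excursions must be shown to accumulate at the rate $(\log x)^{-(m+n-2)}$, which is exactly where the effective error term $t^{-k/2+\delta}$ of \eqref{eq:Birk-S-eff} (and the freedom to choose the submanifold $M\subset E$ of appropriate dimension $k$) is used to absorb the loss. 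Matching this against the sharp exponent, rather than merely "some exponent", is the delicate part; everything else is bookkeeping with the Dani correspondence and the absolute-continuity argument already implicit in Theorem \ref{thm:special}.
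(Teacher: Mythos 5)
Your proposal for the convergence half contains a genuine error. You propose to prove $\operatorname{Leb}\big(\UA^{\times}_{m,n}(\psi_{1,\lambda})\big)=0$ by showing $\sum_T\operatorname{Leb}\big(\mc S^\times_{m,n}(\psi_{1,\lambda},T)\big)<\infty$ and invoking Borel--Cantelli through the containment $\UA^\times\subset\operatorname{W}^\times$. But this does not give the exponent $m+n-2$: the single-$T$ bound is $\operatorname{Leb}\big(\mc S^\times_{m,n}(\psi_{1,\lambda},T)\big)\ll(\log T)^{m+n-2-\lambda}$, so summing over dyadic $T=2^k$ yields $\sum_k k^{m+n-2-\lambda}$, which converges only when $\lambda>m+n-1$. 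That is precisely the Gallagher--Sprind\v{z}uk threshold for $\operatorname{W}^\times$, and it is strictly weaker than what you need. The route you dismiss in your parenthetical as the one that merely requires "slight care" is in fact essential: since $\UA^\times_{m,n}(\psi_{1,\lambda})=\bigcup_{T_0}\bigcap_{T\geq T_0}\mc S^\times_{m,n}(\psi_{1,\lambda},T)$, each inner intersection is contained in $\mc S^\times_{m,n}(\psi_{1,\lambda},T)$ for every $T\geq T_0$, and therefore has measure bounded by $\inf_{T\geq T_0}\operatorname{Leb}\big(\mc S^\times_{m,n}(\psi_{1,\lambda},T)\big)$. Thus one needs only that $\operatorname{Leb}\big(\mc S^\times_{m,n}(\psi_{1,\lambda},T)\big)\to0$ as $T\to\infty$, not summability, and this decay already holds for $\lambda>m+n-2$. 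This is exactly what the paper does; using $\operatorname{W}^\times$ loses a logarithm.

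Your divergence half is morally on track but glosses over the crucial technical step. You cannot simply apply Theorem \ref{cor:diophantine} (or the effective bound \eqref{eq:Birk-S-eff}) with a fixed compactly supported test function, because the cuspidal targets must shrink at rate governed by $\lambda$ as $t\to\infty$. The paper instead constructs $t$-dependent mollified indicator functions $\varphi_{t,r}$ of shrinking cusp annuli $\Omega_t$ (Subsections 5.1--5.2), controls their Sobolev and $\operatorname{L}^p$ norms explicitly (Lemma \ref{lem:phitrproperties}), and then re-runs the moment estimate of Lemma \ref{lem:4mom} for this $t$-dependent family (Lemma \ref{lem:intersection}), with the moment order $h$ chosen large enough that Condition \eqref{eq:measure} on $\liminf_t\log\mu(\Omega_t)/\log t$ is satisfied. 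The parameter you should be tuning is the moment order $h$, not the dimension $k$ of the submanifold (the paper fixes $k=m+n-2$, a section of the cone $E$). Also, your remark that the relevant measure on $[0,1)^{mn}$ is "absolutely continuous with respect to $\nu$-translates" is not quite the mechanism: the measure $\nu$ in Corollary \ref{cor:diophantine} is literally the $U_{m,n}$-invariant measure on the compact orbit $\{x_Y:Y\in\mb R^{m\times n}\}$, which pushes forward to Lebesgue on $[0,1)^{mn}$ under $Y\mapsto x_Y$; no further absolute continuity argument is needed.
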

\pagebreak

\begin{rmk} A few comments are in order:
\begin{itemize}
\item[(i)]
    Gallagher \cite{Gallagher} (for $n=1$) and later Sprind\v{z}uk \cite[Ch.\ 1, Th.\ 13]{Spr79} (for $n>1$) established a complete analog of {the Khintchine--Groshev Theorem} 
    for the sets $\operatorname{W}^{\times}_{m,n}(\psi)$.
    In particular, for the function $\psi_{1,\lambda}$ their results imply that the critical exponent $\lambda$ where the Lebesgue measure changes from 1 to 0 is $m+n-1$. However, in the uniform framework, Theorem \ref{thm:main} shows that this critical exponent is $m+n-2$. \vspace{2mm} 
    \item[(ii)] Our argument yields a result that slightly improves upon Theorem \ref{thm:main} in the case where the measure is zero. In particular, it shows that the set $\UA ^{\times}_{m,n}(\psi)$ has zero measure whenever $\psi(T)=o\left(T^{-1}(\log T)^{-(m+n-2)}\right)$.
\end{itemize}
\end{rmk}

In view of Theorem \ref{thm:special},
the multiplicative analog of the set of singular matrices, i.e., the set
\begin{equation*}
\operatorname{Sing}^\times_{m,n}:= \bigcap_{c > 0}\UA_{m,n}^{\times}
(c\psi_1)
=\bigcap_{c>0}\left\{Y\in \mb{R}^{m\times n} \left|\begin{aligned} \eqref{eq:systemmult} \text{ has a non-trivial integer}\\\text{solution for all large enough }T \ \end{aligned}\right.\right\}
\end{equation*}
has full Lebesgue measure. Note that $\operatorname{Sing}^\times_{m,n}$
is a subset (and  a uniform analog) of
 the set $\operatorname{WA}^\times_{m,n}$ of matrices satisfying 
the Littlewood Conjecture. This motivates the following 
\begin{quest}
\label{quest:3} Is it true that (a) $\operatorname{Sing}^\times_{m,n}=\mb{R}^{m\times n} $? (b) $\operatorname{Sing}^\times_{m,n}=\operatorname{WA}^\times_{m,n}$?
\end{quest}

\begin{figure}[!h]
\centering

\tikzstyle{relation} = [text centered]

\tikzstyle{box} = [rectangle, minimum width=6.75cm, minimum height=3.5cm, text centered, draw=black!70]

\tikzstyle{boxx} = [rectangle, minimum width=6.75cm, minimum height=3cm, text centered, draw=black!70]

\begin{tikzpicture}

\node (box1) [boxx, align=center] {
\textbf{Well-Approximable Matrices} \\[2mm]
$\textup{WA}_{m,n}=\bigcap_{c>0}\textup{W}_{m,n}(c\psi_1)$, where \\[2mm]
$\textup{W}_{m,n}(\psi):=\limsup_{T\to\infty} \mc{S}_{m,n}\left(\psi,T\right)$; \\[2mm]
full Lebesgue measure.
};

\node [relation, right of=box1, xshift=2.75cm] {\scalebox{1.75}{$\supset$}};

\node (box2) [boxx, align=center, right of=box1, xshift=6.5cm] {
\textbf{Singular Matrices} \\[2mm]
$\textup{Sing}_{m,n}=\bigcap_{c>0}\textup{D}_{m,n}(c\psi_1)$, where \\[2mm]
$\textup{D}_{m,n}(\psi):=\liminf_{T\to\infty} \mc{S}_{m,n}\left(\psi,T\right)$; \\[2mm]
Hausdorff dim.\ $mn-\frac{mn}{m+n}$.
};

\node [relation, below of=box1, yshift=-1cm] {\scalebox{1.75}{\rotatebox{90}{$\supset$}}};

\node (box3) [box, align=center, below of=box1, yshift=-3.5cm] {
\textbf{Multiplicatively} \\
\textbf{Well-Approximable Matrices} \\[2mm]
$\textup{WA}_{m,n}^{\times}=\bigcap_{c>0}\textup{W}_{m,n}^{\times}(c\psi_1)$, where \\[2mm]
$\textup{W}^{\times}_{m,n}(\psi):=\limsup_{T\to\infty} \mc{S}^{\times}_{m,n}\left(\psi,T\right)$; \\[2mm]
conjecturally $\mb R^{m\times n}$ (Littlewood),\\
complement of Hausdorff dim.\ zero \\
(Einsiedler--Katok--Lindenstrauss).
};

\node [relation, below of=box2, yshift=-1cm] {\scalebox{1.75}{\rotatebox{90}{$\supset$}}};

\node (box4) [box, align=center, below of=box2, yshift=-3.5cm] {
\textbf{Multiplicatively} \\
\textbf{Singular Matrices} \\[2mm]
$\textup{Sing}_{m,n}^{\times}=\bigcap_{c>0}\textup{D}_{m,n}^{\times}(c\psi_1)$, where \\[2mm]
$\textup{D}^{\times}_{m,n}(\psi):=\liminf_{T\to\infty} \mc{S}^{\times}_{m,n}\left(\psi,T\right)$; \\[2mm]
full Lebesgue measure.
};

\node [relation, right of=box3, xshift=2.75cm] {\scalebox{1.75}{$\supset$}};

\end{tikzpicture}

\caption{Sets of interest in Diophantine approximation and their relation.}

\end{figure}
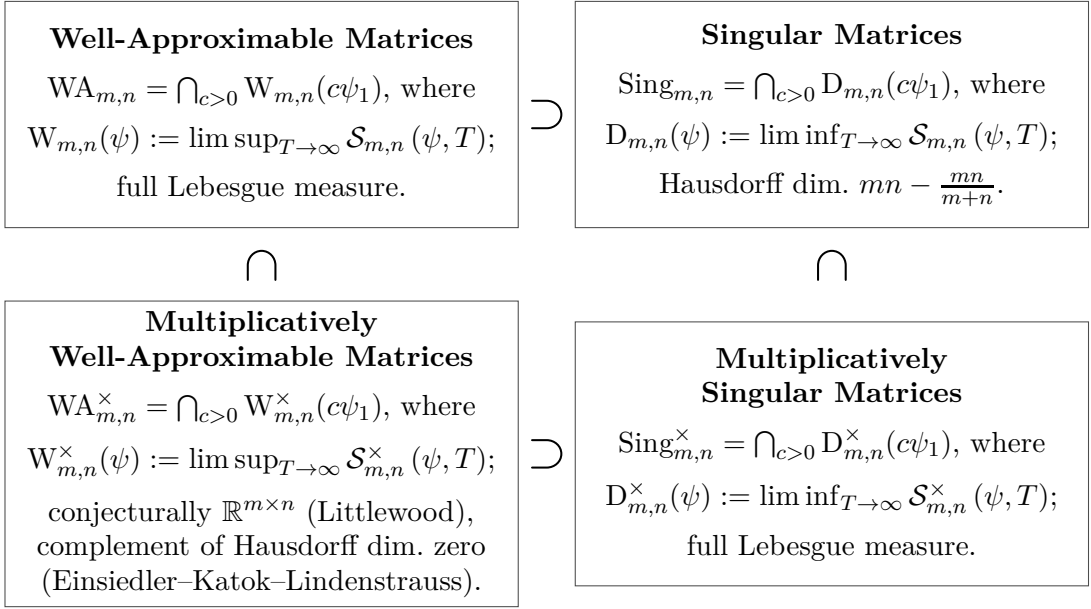

An affirmative answer to (a) would prove Littlewood's Conjecture in a (supposedly) stronger uniform form, while an affirmative answer to (b) would imply that the original and uniform versions of Littlewood's Conjecture are equivalent. It is quite likely though that, {at least in the case $\{m,n\} = \{1,2\}$,} the answers to both questions are negative; that is, there exist matrices $Y$ satisfying Littlewood's Conjecture but not its far-reaching uniform analog. 
Figure 1 summarizes all the various sets mentioned in the above discussion. The focus of this paper is on the lower-right box.\vspace{2mm}

\subsection{Outline of the paper}

Theorems \ref{thm:gmodgamma} and \ref{cor:diophantine}
{are} derived from an abstract result involving general measure-preserving actions, stated in Section \ref{sec:abstract}. The proof of this result can be found in Section \ref{sec:proofs}. 
{In Section \ref{sec:Diophcorr} we discuss in detail the correspondence between Diophantine approximation and dynamics on the space of lattices, and prove Theorem \ref{thm:special}.} The proof of Theorem \ref{thm:main} is presented in Section \ref{sec:Diophproof}, while Proposition \ref{prop:dno} is proved in Section~\ref{sec:dnoproof}. 

\noindent {\bf Acknowledgements.} The authors are grateful to Elon Lindenstrauss, Ralf Spatzier, and  Chengyang Wu for helpful discussions. 

\section{An Abstract Theorem and Proofs of Theorems \ref{thm:gmodgamma} and \ref{cor:diophantine}}
\label{sec:abstract}

\subsection{Two more results} Fix an integer $d\geq 2$ and let $(X,\mu)$ be a probability space equipped with a measure-preserving action $(a,x)\mapsto a.x$ of $\R^d$ on $X$. For any $\varphi\in\Lone(X,\mu)$ and any $a\in \mb R^d$ let us denote by $\varphi\circ a$ the function $\varphi\circ a(x):=\varphi(a.x)$, where $x\in X$. Let us also denote by $\|\cdot\|$ a fixed norm on $\mb R^d$. The following theorem will be used to derive Theorems \ref{thm:gmodgamma} and \ref{cor:diophantine}.

\begin{theorem}
\label{thm:genericgeneral} Let $\mathcal F \subset \textup{L}^{\infty}(X,\mu)$ be a sub-algebra of functions that is closed under the action of $\mb R^d$ and 
contains constant functions. Suppose that $\mathcal F$ is endowed with a norm $S$ with the following property: there exist constants $\rho,c>0$ such that for any $\phi,\psi\in \mc F$ and any $a\in \mb R^d$ 
one has that
\begin{equation}
\label{eq:norm1}
S(\phi\circ a)\leq S(\phi)\cdot e^{\rho \|a\|}    
\end{equation}
and
\begin{equation}
\label{eq:norm2}
S(\phi\cdot \psi)\leq c S(\phi)\cdot S(\psi).    
\end{equation}
Fix an integer $k$ with $1\le k \le d-1$ and let $M$ be a {compact} $k$-dimensional $\mathscr{C}^1$ submanifold
in $\mb R^d$. Let $\nu$ be a measure on $X$ with following property: there exists an integer $h\geq 2$ such that for any choice of $l$ functions $\varphi_1,\dots, \varphi_{l} \in \mc F$,with $1\leq l\leq h$, {any $t>0$}, and any choice of $l$ elements $a_1,\dots,a_{l}\in \{t\cdot g: g\in M\}$, the inequality
\begin{equation}
\label{eq:4mix}
\left|\nu\big((\varphi_1\circ a_1)\dotsm (\varphi_{l}\circ a_{l})\big)-\mu(\varphi_1)\dots \mu(\varphi_l)\right|\leq c'S(\varphi_1)\dotsm S(\varphi_{l})\cdot e^{-\sigma \Delta(a_1,\dotsc,a_{l})}\vspace{1.5mm}    
\end{equation}
holds, where  $\sigma,c'>0$ are fixed constants (potentially depending on $h$ and $d$) and 
$$\Delta(a_1,\dotsc,a_{l}):=\min\{\|a_i-a_j\|:i\neq j,\ i,j=1,\dotsc,l\}.$$
Further, let $(t_n)_{n\geq 1}$ be a sequence of positive real numbers satisfying
\begin{equation}\label{tn}
    t_{n+1}-t_n\geq \zeta>0
\end{equation}
for some constant $\zeta>0$. Then for any $\delta>0$ there exist a set $Y\subset X$, with $\nu(Y)=1$, and an integer $N$ (which may depend on the sequence $t_n$, the function $\varphi$, and $\delta$) such that for every $y\in Y$ and $n\geq N$ one has that
\begin{equation*}
\left|\frac1{t^{k}_n\vol_k(M)}\int_{t_nM}\varphi(a.y)\,\dd\vol_k(a)-\int_X\varphi\,\dd\mu\right|\leq t_n^{-k/2+1/h+\delta}.
\end{equation*}
\end{theorem}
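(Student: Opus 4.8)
The plan is to control a high moment of the averaging operator with respect to $\nu$ and then conclude by Borel--Cantelli along the sequence $(t_n)$. Since $\mathcal F$ is an algebra containing the constants and $S$ is a norm, we may subtract $\int_X\varphi\,\dd\mu$ from $\varphi$ and so assume $\int_X\varphi\,\dd\mu=0$, at the cost only of replacing $S(\varphi),\|\varphi\|_\infty$ by slightly larger quantities. Write
$$A_n(y):=\frac1{t_n^{k}\vol_k(M)}\int_{t_nM}\varphi(a.y)\,\dd\vol_k(a),$$
with $h\ge2$ the integer furnished by \eqref{eq:4mix}. The key estimate to establish is the moment bound $\int_X A_n(y)^{h}\,\dd\nu(y)\ \ll_{\varphi,\varepsilon}\ t_n^{-hk/2+\varepsilon}$ for every $\varepsilon>0$ (when $h$ is odd one needs a routine modification, e.g.\ one works with the even exponent, or splits according to the sign of $A_n$; I will not dwell on this). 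Expanding the power and using Fubini,
$$\int_X A_n^{h}\,\dd\nu=\frac1{(t_n^{k}\vol_k(M))^{h}}\int_{(t_nM)^{h}}\nu\!\big((\varphi\circ a_1)\cdots(\varphi\circ a_h)\big)\,\dd\vol_k(a_1)\cdots\dd\vol_k(a_h).$$

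\textbf{The clustering dissection.} The heart of the matter is to dissect $(t_nM)^{h}$ according to the ``clustering pattern'' of $(a_1,\dots,a_h)$ at a scale $R_n$ taken to be a large multiple of $\log t_n$: let $\mathcal P=\mathcal P(a_1,\dots,a_h)$ be the partition of $\{1,\dots,h\}$ into connected components of the graph with an edge $i\sim j$ whenever $\|a_i-a_j\|\le R_n$. There are two regimes. If $|\mathcal P|\le h/2$ (which in particular covers every $\mathcal P$ all of whose blocks have at least two elements), then a spanning-tree count bounds the $k$-volume of the corresponding region by $\ll(t_n^{k}R_n^{k})^{h/2}$: each block of size $b$ is determined by one free point of $t_nM$ together with $b-1$ points lying in radius-$R_n$ balls, whose $k$-volume on the dilated submanifold is $\ll R_n^{k}$ uniformly (here the compactness and $\mathscr C^1$ hypotheses on $M$ enter). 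On this region I simply use $|\nu(\prod_i\varphi\circ a_i)|\le\|\varphi\|_\infty^{h}$, so its contribution is $\ll\|\varphi\|_\infty^{h}(R_n/t_n)^{hk/2}\ll_\varepsilon t_n^{-hk/2+\varepsilon}$. If instead $|\mathcal P|>h/2$, then $\mathcal P$ contains a singleton $\{i_0\}$, and I use \eqref{eq:4mix}: for each block $B\in\mathcal P$ fix a base point $a_{i_B}\in\{a_i:i\in B\}$ and set $\psi_B:=\prod_{i\in B}\varphi\circ(a_i-a_{i_B})\in\mathcal F$, so that $\prod_i\varphi\circ a_i=\prod_{B}\psi_B\circ a_{i_B}$. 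By \eqref{eq:norm2} and \eqref{eq:norm1}, $S(\psi_B)\le c^{|B|-1}S(\varphi)^{|B|}e^{\rho\sum_{i\in B}\|a_i-a_{i_B}\|}$, the exponents being bounded by the diameter of the block, which is $\ll hR_n$. Since base points of distinct blocks are $>R_n$ apart, \eqref{eq:4mix} (with $l=|\mathcal P|\le h$) gives
$$\Big|\nu\!\big(\textstyle\prod_i\varphi\circ a_i\big)-\textstyle\prod_{B\in\mathcal P}\mu(\psi_B)\Big|\ \ll_h\ S(\varphi)^{h}\,e^{O_h(\rho R_n)}\,e^{-\sigma R_n},$$
and $\prod_B\mu(\psi_B)$ vanishes because the singleton block contributes the factor $\mu(\varphi)=0$. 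Taking $R_n$ a large enough multiple of $\log t_n$ (so that the mixing rate $\sigma$ dominates the growth coming from $\rho$) makes the right-hand side $\ll t_n^{-hk}$, whence these regions contribute negligibly; in borderline partitions one also invokes the volume bound from the first regime to absorb part of the error. Summing over the finitely many partitions $\mathcal P$ yields the claimed moment estimate.

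\textbf{Chebyshev and Borel--Cantelli.} Fix $\delta>0$, choose $\varepsilon<h\delta$, and apply Chebyshev's inequality:
$$\nu\!\Big(\big\{y:|A_n(y)|>t_n^{-k/2+1/h+\delta}\big\}\Big)\ \le\ t_n^{\,h(k/2-1/h-\delta)}\int_X A_n^{h}\,\dd\nu\ \ll_{\varphi,\varepsilon}\ t_n^{-1-h\delta+\varepsilon}.$$
The gap hypothesis \eqref{tn} gives $t_n\ge\zeta(n-1)$, so $\sum_n t_n^{-1-h\delta+\varepsilon}<\infty$; by the Borel--Cantelli lemma the set $Y$ of $y$ lying in only finitely many of the above sets is $\nu$-conull, and for $y\in Y$ one has $|A_n(y)|\le t_n^{-k/2+1/h+\delta}$ for all $n\ge N=N((t_n),\varphi,\delta,y)$, which is the assertion.

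\textbf{Main obstacle.} I expect the clustering step in the regime $|\mathcal P|>h/2$ to be the crux: one must carefully reconcile the exponential growth rate $\rho$ of the norm $S$ along the action — which feeds into the $S$-norms of the composite functions $\psi_B$ through block diameters of size $\asymp\log t_n$ — with the mixing rate $\sigma$, and keep the resulting bookkeeping uniform over all admissible partitions, possibly splitting further by block diameter and trading volume against the decorrelation error in the delicate cases.
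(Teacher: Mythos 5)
Your overall architecture (reduce to $\mu(\varphi)=0$, expand the $h$-th moment via Fubini, dissect the domain by clustering pattern, kill singleton-containing partitions via mixing, bound the rest by volume, then Chebyshev and Borel--Cantelli along $t_n$) is exactly the paper's strategy. However, the clustering dissection you propose is genuinely insufficient, and the issue you flag at the end as the ``main obstacle'' is precisely where the argument breaks.

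The problem is that you cluster at a \emph{single} scale $R_n$: blocks are connected components of the $R_n$-proximity graph, so within-block distances can be as large as $\Theta(hR_n)$, while between-block base points are only guaranteed to be $>R_n$ apart. When you apply \eqref{eq:norm1}--\eqref{eq:norm2} to $\psi_B$, the factor $e^{O_h(\rho R_n)}$ and the mixing factor $e^{-\sigma R_n}$ involve the \emph{same} scale $R_n$. If $\rho$ is large relative to $\sigma$ (which is entirely permitted, as both are just given constants), the product $e^{(O_h(\rho)-\sigma)R_n}$ \emph{grows} in $R_n$, and no choice of $R_n\asymp\log t_n$ (nor any other monotone choice) can simultaneously make the all-singleton contribution decay like $t_n^{-kh/2}$ and control the mixed partitions with $h/2<|\mathcal P|<h$. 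Concretely, the all-singleton term forces $R_n\gtrsim (kh/2\sigma)\log t_n$, while partitions with $h-|\mathcal P|\ge 1$ and $|\mathcal P|>h/2$ force $R_n\lesssim (k/\rho(h-1))\log t_n$; these are incompatible unless $\sigma\gtrsim \rho h(h-1)$.

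The paper resolves this by replacing your single-scale connected-component dissection with a \emph{multi-scale} covering lemma (their Lemma 3.4, from Bj\"orklund--Gorodnik): a nested chain of scales $0=\alpha_0<\beta_1<\alpha_1=3\beta_1<\beta_2<\cdots<\beta_{h+1}$, where the ratio $\beta_{l+1}/\alpha_l$ is taken to be a fixed large constant $\asymp h\rho\lambda_{\sup}/(\sigma\lambda_{\inf})$. The covering lemma guarantees that every $h$-tuple either clusters entirely at the top scale, or admits a partition $\mathcal Q$ and level $l$ with within-block diameters $\le\alpha_l$ and between-block separations $>\beta_{l+1}$; the geometric gap $\beta_{l+1}\gg\alpha_l$ is precisely what makes $h\rho\lambda_{\sup}\alpha_l-\sigma\lambda_{\inf}\beta_{l+1}\le -\sigma\lambda_{\inf}\beta_{l+1}/2<0$ regardless of the ratio $\rho/\sigma$. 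This is the idea missing from your proposal: the scale that controls the growth of $S(\psi_B)$ must be decoupled from, and much smaller than, the scale that feeds into the mixing decay, and the covering lemma is the device that lets you do this while still exhausting all of $(t_nM)^h$ by finitely many such regimes. Once you have that, the remaining bookkeeping (choice of base scale $b\asymp t_n^{-1+\delta/k}$ in parameter coordinates, volume bounds, moment estimate, Chebyshev, Borel--Cantelli) matches what you wrote.
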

Under the assumption that $X$ is a metric space and that the functions in $\mc F$ are regular enough, we may also obtain a continuous version of Theorem \ref{thm:genericgeneral}, which we now state.

\begin{theorem}
\label{thm:2}
Assume that the measure space {$(X,\nu)$} is endowed with a distance function $\dd$ and that all functions in the family $\mc F$ are Lipschitz with respect to such distance. Assume also that the action of $\R^d$ on $X$ is uniformly Lipschitz with respect to $\dd$. In other words, assume that there exists a constant $C>0$ such that for any $a_1,a_2\in \mb R^d$ and any $x\in X$
\begin{equation}
\label{eq:Lip}
\dd(a_1.x,a_2.x)\leq C\|a_1-a_2\|.    
\end{equation}
Then, under Assumptions {\eqref{eq:norm1}--\eqref{eq:4mix}}
 of Theorem \ref{thm:genericgeneral}, we have that the measure $\nu$ is {$(k,\mc F)$-generic for any $k = 1,\dots,d-1$}. Moreover, {for any {compact} $\mathscr{C}^1$ submanifold  
 $M\subset \mb R^d$ of dimension $1\leq k\leq d-1$} the rate of convergence in \eqref{eq:Birk-S} is given by $$O\left(t^{\left(-\frac{kh}{2}+\delta h+1\right)\frac{1}{1+h}}\right)$$
for any $\delta>0.$
\end{theorem}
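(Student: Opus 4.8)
The plan is to deduce Theorem~\ref{thm:2} from Theorem~\ref{thm:genericgeneral}, together with the moment estimate on which its proof rests, a dyadic decomposition of the dilation parameter, and an elementary interpolation. First I would record that, after the change of variables $a=tg$,
\begin{equation*}
A_x(t):=\frac{1}{t^{k}\vol_k(M)}\int_{tM}\varphi(a.x)\,\dd\vol_k(a)=\frac{1}{\vol_k(M)}\int_{M}\varphi(tg.x)\,\dd\vol_k(g),
\end{equation*}
so that, combining the Lipschitz regularity of $\varphi$, Assumption~\eqref{eq:Lip}, and the compactness of $M$, one gets the elementary bound $|A_x(t)-A_x(t')|\le\Lambda\,|t-t'|$ for all $x\in X$ and $t,t'\ge 1$, where $\Lambda=\Lambda(\varphi,M,C)$ does not depend on $x$. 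Replacing $\varphi$ by $\varphi-\mu(\varphi)\in\mc F$ we may assume $\mu(\varphi)=0$, and we put $F_t(x):=A_x(t)$. For the genericity statement this already suffices: applying Theorem~\ref{thm:genericgeneral} to each of the sequences $t_n:=n/m$, $m\in\N$ (which have gaps $\ge 1/m>0$), gives $\nu$-conull sets $Y_m$ on which $F_{n/m}(y)\to 0$, and the Lipschitz bound then forces $\limsup_{t\to\infty}|F_t(y)|\le\Lambda/m$ on $Y_m$; intersecting over $m$ yields $F_t(y)\to0$ for $\nu$-a.e.\ $y$, i.e.\ $\nu$ is $(k,\mc F)$-generic.

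For the rate one cannot simply feed a slowly growing sequence into Theorem~\ref{thm:genericgeneral}, since the gap hypothesis $t_{n+1}-t_n\ge\zeta>0$ keeps the interpolation error $\ge\zeta\Lambda$ bounded away from $0$. Instead I would use a moment estimate of the form: for every $\delta>0$ there is $C=C(\varphi,M,h,\delta)$ such that
\begin{equation*}
\int_X|F_t(x)|^{h}\,\dd\nu(x)\ \le\ C\,t^{-kh/2+\delta h}\qquad\text{for all }t\ge 1,
\end{equation*}
which is what the proof of Theorem~\ref{thm:genericgeneral} establishes out of \eqref{eq:4mix}. (Expanding the $h$-th moment over $M^{h}$ reduces this to an estimate for the $h$-fold integral of $e^{-\sigma\Delta(tg_1,\dots,tg_h)}$ over $M^{h}$; obtaining the exponent $kh/2$ rather than the naive $k$ is the real substance of Theorem~\ref{thm:genericgeneral}, the heuristic being that $F_t$ behaves like a normalised sum of $\asymp t^{k}$ nearly independent, mean-zero, uniformly bounded cell-averages over $\tfrac1t$-cells of $M$, whence $\|F_t\|_{h}\ll t^{-k/2+\delta}$.)

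Now fix $\delta>0$, set $s:=\frac{kh/2-\delta h-1}{1+h}$, decompose $[1,\infty)=\bigcup_{j\ge 0}[2^{j},2^{j+1}]$, and on the $j$-th block take a grid of mesh $\eta_j:=\frac{2^{-js}}{2\Lambda}$, so that the interpolation error across the grid is at most $\tfrac12\,2^{-js}$. A union bound over the $\asymp 2^{j(1+s)}$ grid points, Chebyshev's inequality with the $h$-th moment, and the estimate $t\asymp 2^{j}$ on the block give
\begin{equation*}
\nu\Bigl(\sup_{t\in[2^{j},2^{j+1}]}|F_t|>2^{-js}\Bigr)\ \ll_{\varphi,M}\ 2^{j(1+s)}\cdot 2^{jsh}\cdot 2^{-j(kh/2-\delta h)}\ =\ 2^{\,j\,(1+s(1+h)-kh/2+\delta h)}.
\end{equation*}
With this choice of $s$ the exponent equals $0$; applying the moment estimate with $\delta/2$ in place of $\delta$ makes it $-\delta h/2<0$, so the series over $j$ converges and Borel--Cantelli gives, for $\nu$-a.e.\ $y$, an index $J(y)$ with $\sup_{t\in[2^{j},2^{j+1}]}|F_t(y)|\le 2^{-js}$ for all $j\ge J(y)$. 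Since $2^{-js}\ll t^{-s}$ whenever $t\in[2^{j},2^{j+1}]$, this yields $|A_y(t)-\mu(\varphi)|=O\bigl(t^{-s}\bigr)=O\bigl(t^{(-kh/2+\delta h+1)/(1+h)}\bigr)$ as $t\to\infty$; since $\varphi\in\mc F$ and $\delta>0$ were arbitrary, Theorem~\ref{thm:2} follows.

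The main obstacle is precisely the point made at the start of the second paragraph: the continuous statement is not a formal consequence of the discrete one, because no admissible sequence makes the dilation-interpolation error tend to $0$, and so one must open up the proof of Theorem~\ref{thm:genericgeneral} and use its $h$-th moment bound directly. Everything after that is routine bookkeeping; the single genuine choice is to match the discretisation mesh $\eta_j$ to the target decay $2^{-js}$, and it is exactly this balance --- one factor of $2^{js}$ coming from the number of grid points on a dyadic block and a further $h$ factors from the $h$-th power in Chebyshev's inequality --- that converts the discrete exponent into its continuous analogue, with the extra factor $\tfrac1{1+h}$.
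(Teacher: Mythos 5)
Your proposal is correct and rests on exactly the same idea as the paper's proof: one must use the $h$-th moment estimate from Lemma~\ref{lem:4mom} directly (rather than invoke Theorem~\ref{thm:genericgeneral} as a black box), discretize the dilation parameter finely enough, apply Chebyshev and Borel--Cantelli, and then interpolate via the Lipschitz hypothesis \eqref{eq:Lip}, optimizing so that the moment-error and interpolation-error terms balance. The only cosmetic difference is the discretization scheme --- you chain over dyadic blocks $[2^j,2^{j+1}]$ with block-dependent grids of mesh $\asymp 2^{-js}$, while the paper uses a single polynomially spaced sequence $t_n = n^{1-A}$ with $A$ optimized at the end --- and both yield the same rate $O\bigl(t^{(-kh/2+\delta h+1)/(1+h)}\bigr)$.
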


\subsection{Proof of Theorem \ref{thm:gmodgamma} assuming Theorem \ref{thm:2}}
Let us begin by recalling the following result.

\begin{theorem}\cite[Theorem 1.1]{BEG20}
    \label{BEG}
    Let $G,\Gamma, X, \text{ and } \mu$ be as in Theorem \ref{thm:gmodgamma}.\\
    Then for every $h\geq 2$ and all sufficiently large $d\in \N$ there exists a constant $\sigma=\sigma(h,d)>0$ such that, for any $h$ functions $\varphi_1,\dots, \varphi_h \in \mathscr{C}_{c}^{\infty}(X)$ and any $h$ elements $g_1,\dots,g_h \in G$, we have that
    \begin{multline*}
        \left|\mu{\big((\varphi_1 \circ g_1) \dotsm (\varphi_h \circ g_h)\big)}-\mu(\varphi_1)\dotsm \mu(\varphi_h)\right| \\
        \ll_{h,d} S_d(\varphi_1)\dotsm S_d(\varphi_h) e^{-\sigma \min \{\dist(g_i,g_j)\,: \,1\leq i\neq j\leq h\}},
    \end{multline*}
    where $S_d$ denotes the $(2,d)$-Sobolev norm on $X$, and $\operatorname{dist}$ is a fixed left-$G$-invariant metric on $G$. {Here $\varphi \circ g$ denotes the function on $X$ defined by $(\varphi \circ g)(x):=\varphi(g.x)$}
    \end{theorem}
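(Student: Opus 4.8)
\textbf{Proof proposal for Theorem~\ref{thm:gmodgamma} assuming Theorems~\ref{thm:2} and~\ref{BEG}.}

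The plan is to check that the hypotheses of Theorem~\ref{thm:2} are met by the data $(X,\mu)=(G/\Gamma,\mu)$, the $\mathfrak a\cong\R^d$-action $a.x=\exp(a)x$, the family $\mc F=\mathscr C_c^\infty(X)$, and the norm $S=S_d$ (a fixed $(2,d)$-Sobolev norm, with $d$ taken large enough that Theorem~\ref{BEG} applies for the chosen mixing order $h$). First I would record the structural properties of $\mc F$: it is a subalgebra of $\textup{L}^\infty(X,\mu)$, contains the constants, and is closed under the $\R^d$-action since each $\exp(a)$ is a diffeomorphism of $X$. Next I would verify the two norm inequalities. For \eqref{eq:norm2}, the submultiplicativity $S_d(\phi\cdot\psi)\le c\,S_d(\phi)S_d(\psi)$ is the standard Leibniz/Sobolev product estimate (valid because $d$ exceeds half the dimension of $X$, so $\mathscr C^d$-type Sobolev spaces embed in an algebra; one uses that the functions are compactly supported). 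For \eqref{eq:norm1}, one needs $S_d(\phi\circ a)\le S_d(\phi)e^{\rho\|a\|}$: the right-translation by $\exp(a)$ distorts the invariant vector fields defining $S_d$ by the adjoint action, and for a Cartan subgroup $A$ the operator norm of $\mathrm{Ad}(\exp a)$ on $\mathfrak g$ grows at most exponentially in $\|a\|$ with respect to any fixed norm on $\mathfrak a$; iterating $d$ derivatives gives the claimed bound with $\rho$ depending on $d$ and the root system.

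Then I would translate the mixing input. Theorem~\ref{BEG} gives, for every $h\ge 2$ and all sufficiently large $d$, an effective $h$-fold mixing estimate with rate $e^{-\sigma\min_{i\neq j}\dist(g_i,g_j)}$ for a left-$G$-invariant metric $\dist$ on $G$. Apply this with $\nu=\mu$ and with the group elements $g_i=\exp(a_i)$, where $a_1,\dots,a_l$ ($l\le h$) lie in $tM$; note that for $l<h$ one may pad with constant functions, or simply invoke the statement for fewer functions, which follows from the $h$-fold case. The point is that, since $\dist$ is left-$G$-invariant, $\dist(\exp a_i,\exp a_j)=\dist(e,\exp(-a_i)\exp(a_j))$, and because $A$ is abelian this equals $\dist(e,\exp(a_j-a_i))$, which is comparable (bounded below by a constant times, for $a_j-a_i$ in a fixed bounded region, and in general at least a linear function of) $\|a_i-a_j\|$. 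Hence $e^{-\sigma\min_{i\neq j}\dist(\exp a_i,\exp a_j)}\le c' e^{-\sigma'\Delta(a_1,\dots,a_l)}$ for a suitable $\sigma'>0$, which is exactly hypothesis~\eqref{eq:4mix}. This fixes some $h\ge 2$ (hence a value of $d$ large enough for Theorem~\ref{BEG}); the conclusion of Theorem~\ref{thm:gmodgamma} about $k\le d-1$ then holds for that $d$, and one checks it holds for all admissible $d$ by the same argument.

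I would also check the Lipschitz hypotheses of Theorem~\ref{thm:2}: $X=G/\Gamma$ carries a $G$-invariant Riemannian metric $\dd$, smooth compactly supported functions are Lipschitz for it, and the action is uniformly Lipschitz because $\dd(\exp(a_1)x,\exp(a_2)x)\le \dd(\exp(a_1)x,\exp(a_2)x)$ is controlled by the length of the path $s\mapsto\exp(a_1+s(a_2-a_1))x$, whose speed is bounded by $C\|a_1-a_2\|$ with $C$ depending only on the metric and the generating vector fields on a neighborhood of the image — here one uses cocompactness of the relevant piece or properness, restricting to the compact support of $\varphi$ where only a neighborhood matters; more simply, since $\exp(a)$ acts by isometries one can use $\dd(\exp(a_1)x,\exp(a_2)x)=\dd(x,\exp(a_1-a_2)x)$ and bound the displacement of a point under $\exp(b)$ by $\|b\|$ up to a constant. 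With all hypotheses verified, Theorem~\ref{thm:2} gives that $\mu$ is $(k,\mc F)$-generic for every $1\le k\le d-1$, which is the first assertion.

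Finally, for the quantitative bound \eqref{eq:Birk-S-eff}: Theorem~\ref{thm:2} yields a convergence rate $O\big(t^{(-\frac{kh}{2}+\delta h+1)/(1+h)}\big)$ for each fixed $h$ (equivalently, the discrete Theorem~\ref{thm:genericgeneral} gives the exponent $-k/2+1/h+\delta$ along lacunary-gap sequences $t_n$, and one upgrades to all $t$ by the Lipschitz interpolation of Theorem~\ref{thm:2}). Since Theorem~\ref{BEG} supplies the $h$-fold estimate for every $h\ge 2$ (at the cost of enlarging $d$, but $d$ is fixed here and large enough once, since $G$ has rank $d$ — so one should invoke Theorem~\ref{BEG} for the specific $d=\operatorname{rank}G$, which is ``sufficiently large'' precisely when $d$ is large; for small $d$ the statement should be read as holding for whatever $h$ the hypothesis of Theorem~\ref{BEG} permits, but the paper's phrasing indicates $d$ is in the valid range), letting $h\to\infty$ drives the exponent to $-k/2+\delta$; absorbing the residual $1/h$ into a slightly larger $\delta$ gives the bound $t^{-k/2+\delta}$ for all $t\ge T_0(\delta,\varphi,M,x)$, which is exactly \eqref{eq:Birk-S-eff}. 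The main obstacle I anticipate is the bookkeeping around the order of quantifiers between $h$ and $d$: Theorem~\ref{BEG} requires $d$ large depending on $h$, while in Theorem~\ref{thm:gmodgamma} $d=\operatorname{rank}G$ is fixed; one must argue that for the fixed $d$ at hand the estimate is available for all $h$ up to some $h_0(d)\to\infty$ as $d\to\infty$, or — more cleanly — observe that the rate $t^{-k/2+\delta}$ only needs $1/h<\delta$, i.e. a single sufficiently large $h$, for which the corresponding hypothesis on $d$ is assumed to hold. Getting the constants $\rho$ in \eqref{eq:norm1} and the comparison $\dist(\exp a,\exp b)\asymp\|a-b\|$ uniform (rather than only for bounded arguments) is the other technical point, handled via the fact that a word metric on the abelian group $\exp(\mathfrak a)$ is bi-Lipschitz to the norm metric on $\mathfrak a$.
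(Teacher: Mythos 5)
The statement you were asked about is Theorem~\ref{BEG}, which the paper does not prove at all: it is a black-box citation of \cite[Theorem 1.1]{BEG20}, an effective higher-order mixing estimate on higher-rank semi-simple homogeneous spaces. The paper simply imports it and uses it as an input to Theorem~\ref{thm:2}. Your proposal does not attempt to prove Theorem~\ref{BEG}; it instead proves Theorem~\ref{thm:gmodgamma} assuming Theorems~\ref{thm:2} and~\ref{BEG}, which is a different (subsequent) step. So, strictly, there is nothing to compare between your attempt and the paper's handling of Theorem~\ref{BEG} --- the paper's ``proof'' of it is a bibliography entry, and reproducing the \cite{BEG20} argument would be a substantial independent project.

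If your intent was to prove Theorem~\ref{thm:gmodgamma}, then your argument is in substance correct and matches the short sketch the paper gives in Subsection~2.2, just with the hypothesis-checking spelled out: $\mathscr C_c^\infty(X)$ is a subalgebra of $\textup{L}^\infty$ closed under the action, $S_d$ satisfies \eqref{eq:norm1} and \eqref{eq:norm2} via the adjoint-action estimate and the Sobolev product rule, the left-invariance of $\dist$ plus $A$ abelian gives $\dist(\exp a_i,\exp a_j)\asymp\|a_i-a_j\|$, and the $G$-action is $1$-Lipschitz. One genuine confusion appears at the end, though, and it undercuts your closing ``main obstacle'' paragraph: the letter $d$ means two different things in the two theorems. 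In Theorem~\ref{thm:gmodgamma}, $d=\operatorname{rank}G$ and is the dimension of the Cartan action. In Theorem~\ref{BEG}, $d$ is the Sobolev \emph{order} in $S_d$, a regularity parameter one is free to increase at will; the phrase ``for all sufficiently large $d$'' there is about Sobolev regularity, not about $\operatorname{rank} G$. There is therefore no quantifier tension to resolve: the rank is fixed once and for all, and for each desired multiple-mixing order $h$ you simply invoke Theorem~\ref{BEG} with a large enough Sobolev order $d(h)$ and use $S=S_{d(h)}$ as the norm in Theorem~\ref{thm:2}. Your concluding observation --- that obtaining $t^{-k/2+\delta}$ only requires one $h$ with $1/h<\delta$ --- is the right mechanism, and the preceding worry about ``for small $d$\ldots'' can be deleted.
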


\begin{proof}[Proof of Theorem \ref{thm:gmodgamma}] With 
{Theorem \ref{BEG} at hand, the proof} 
follows readily from Theorem \ref{thm:2} with $\nu=\mu$ (the {$G$-invariant} probability measure on $G/\Gamma$). Note that the action of $G$ on $G/\Gamma$ is $1$-Lipschitz with respect to the induced distance function, and that the mixing assumption \eqref{eq:4mix} for functions in $\mathcal{F}=\mathscr{C}^{\infty}_c(X)$ and $h\geq 2$ is ensured by Theorem \ref{BEG}. The norm $S$ here is chosen to be the $(2,d)$-Sobolev norm $S_d$. Assumptions \eqref{eq:norm1} and \eqref{eq:norm2} are easily verified for this norm {and the conclusion follows.}
\end{proof}

\subsection{Proof of Theorem \ref{cor:diophantine} assuming Theorem \ref{thm:2}}
\label{subsec:2.2imp1.4}

{In order to prove Theorem \ref{cor:diophantine}, we require the following result on effective multiple mixing with respect to the measure $\nu$ introduced in Theorem \ref{cor:diophantine}. Note that $\nu$ is a Wiener measure
as defined in \cite{BG23}, hence the following theorem holds for $\nu$.}
\begin{theorem}\cite[Theorem 1.1]{BG23}
    \label{BG23}
    Let $G, \Gamma, X, \text{and } \nu$ be as in Theorem \ref{cor:diophantine}, {and let $E$ be as in \eqref{eq:E}}. Then
    for every $h\geq 1$ there exist $d=d(h)\in \N$ and $\sigma=\sigma(h)>0$ such that for any $h$ functions $\varphi_1,\dots, \varphi_h \in \mathscr{C}_c^{\infty}(X)$ and any $h$ elements $a_1,\dots,a_h\in E$ 
    we have that
    \begin{multline*}
        \left|\nu\Big(\varphi_1 \circ \exp(a_1) \dotsm \varphi_h \circ \exp(a_h)\Big)-\mu(\varphi_1)\dotsm \mu(\varphi_h)\right| \\
        \ll_{h,\nu} S_d(\varphi_1)\dotsm S_d(\varphi_h)\cdot e^{-\sigma \overline{\Delta}(a_1,\dots,a_h)},
    \end{multline*}
    where 
    $\overline{\Delta}(a_1,\dots,a_h):=\min \{\|a_i-a_j\|, \lfloor{a_i\rfloor}\,: \,1\leq i\neq j\leq h\}$. Here, $\|\cdot\|$ stands for a fixed norm on $\mf a\cong \mb R^{m+n-1}$, and $\lfloor a\rfloor$ denotes the quantity $\min_{i}|a_i|$ for any $a\in
    {E}$. The norm $S_d$ is once again assumed to be the $(2,d)$-Sobolev norm on the space $X$.
\end{theorem}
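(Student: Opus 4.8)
The plan is to derive the bound from two effective inputs — one about the $G$-invariant measure $\mu$ and one about $\nu$ — and to combine them by induction on $h$; the shape of $\overline{\Delta}$ reflects exactly this split, since the pairwise separations $\|a_i-a_j\|$ will be controlled by mixing of $\mu$ and the quantities $\lfloor a_i\rfloor$ by the effective genericity of $\nu$. The first input is effective exponential mixing of \emph{all orders} for the $\mathfrak a$-action with respect to $\mu$: applying Theorem~\ref{BEG} to the elements $g_i=\exp(a_i)$ and using $\dist(\exp(a_i),\exp(a_j))\asymp\|a_i-a_j\|$ one obtains, for $d$ large in terms of the number of factors,
\[
\Bigl|\mu\Bigl(\prod_{i=1}^{l}\varphi_i\circ\exp(a_i)\Bigr)-\prod_{i=1}^{l}\mu(\varphi_i)\Bigr|\ll S_d(\varphi_1)\dotsm S_d(\varphi_l)\,e^{-\sigma\min_{i\ne j}\|a_i-a_j\|},
\]
which ultimately rests on the spectral gap of $G=\SL_{m+n}(\R)$ on $\Ltwo_{0}(X,\mu)$ (property~(T), since $m+n\ge 3$). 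The second input is the \emph{Wiener property} of $\nu$ — effective equidistribution of the $\exp(a)$-translates of the orbit carrying $\nu$:
\[
\Bigl|\nu\bigl(\varphi\circ\exp(a)\bigr)-\mu(\varphi)\Bigr|\ll S_d(\varphi)\,e^{-\sigma\lfloor a\rfloor}\qquad(a\in E).
\]
This last estimate I would establish by the classical unfolding/thickening argument: since $\nu$ is $U_{m,n}$-invariant on a compact orbit and $\exp(a)$ with $a\in E$ expands $U_{m,n}$ under conjugation with every eigenvalue at least $e^{2\lfloor a\rfloor}$, one writes $\int\varphi(\exp(a)ux_0)\,\dd u$ as a matrix coefficient, smears the base point by a transversal bump of width $\asymp e^{-\lfloor a\rfloor}$, and compares with $\mu(\varphi)$ via effective decay of matrix coefficients. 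Throughout one uses that the Sobolev norm $S_d$ dominates the sup-norm and satisfies the cocycle-type estimates $S_d(\phi\circ a)\le S_d(\phi)e^{\rho\|a\|}$ and $S_d(\phi\psi)\le c\,S_d(\phi)S_d(\psi)$.

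To combine the two, set $F:=\prod_{i=1}^{h}\varphi_i\circ\exp(a_i)$ and, after relabeling, assume $\lfloor a_1\rfloor=\min_i\lfloor a_i\rfloor$. Since $\mathfrak a$ is abelian, $\exp(a_i)=\exp(a_i-a_1)\exp(a_1)$, whence $F=\Psi\circ\exp(a_1)$ with $\Psi:=\varphi_1\cdot\prod_{i>1}\bigl(\varphi_i\circ\exp(a_i-a_1)\bigr)\in\mathscr{C}^\infty_c(X)$. The Wiener property gives $\nu(F)=\mu(\Psi)+O\bigl(S_d(\Psi)e^{-\sigma\lfloor a_1\rfloor}\bigr)=\mu(F)+O\bigl(S_d(\Psi)e^{-\sigma\lfloor a_1\rfloor}\bigr)$, and the first input gives $\mu(F)=\prod_i\mu(\varphi_i)+O\bigl(\prod_iS_d(\varphi_i)\,e^{-\sigma\min_{i\ne j}\|a_i-a_j\|}\bigr)$. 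Since $\min_{i\ne j}\|a_i-a_j\|\ge\overline{\Delta}(a_1,\dots,a_h)$, the second error already has the required form; the difficulty lies with the first, because the cocycle estimates only yield $S_d(\Psi)\ll\prod_iS_d(\varphi_i)\,\exp\bigl(\rho\sum_{i>1}\|a_i-a_1\|\bigr)$, which can be much larger than $e^{\sigma\lfloor a_1\rfloor}$ whenever the configuration $\{a_i\}$ spreads over incommensurate scales.

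Resolving this is the crux. The remedy is not to push $\exp(a_1)_*\nu$ all the way to $\mu$ in one step, but to peel off one factor at a time at matched scales and induct on $h$: (a) one first reduces to $\overline{\Delta}$ large, the complementary range being trivial since $S_d$ dominates the sup-norm; (b) one performs a dyadic decomposition of $\{a_i\}$ into clusters separated at the current scale, smoothly partitioning each $\varphi_i$, so that at each stage exactly the factor realizing the current minimum of the $\lfloor a_i\rfloor$ (or of the cluster separations) is \emph{promoted}: one inserts a $U_{m,n}$-invariance average of $\nu$ against an \emph{anisotropic} approximate-identity bump on $U_{m,n}$ whose widths along the various root coordinates are tuned so that, simultaneously, the promoted factor is forced to equidistribute — gaining $e^{-c\,\overline{\Delta}}$ via the Wiener property applied on a $U_{m,n}$-segment — while each remaining factor is perturbed by at most $e^{-c\,\overline{\Delta}}S_d(\cdot)$; between-cluster separations are absorbed by invoking the first input with fewer factors. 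Each such step replaces an $h$-fold $\nu$-correlation by an $(h-1)$-fold one up to an error of the desired size, and the recursion bottoms out at a single factor, which is handled directly by the Wiener property. Tracking the constants — which degrade with $h$ and thereby force $d=d(h)$ to be chosen large, exactly as in the statement — produces $\bigl|\nu(F)-\prod_i\mu(\varphi_i)\bigr|\ll_{h,\nu}\prod_iS_d(\varphi_i)\,e^{-\sigma\overline{\Delta}(a_1,\dots,a_h)}$. The principal obstacle is precisely the scale-matching in step~(b): designing the anisotropic bump so as to gain on one factor while losing essentially nothing on all the others across a configuration with incommensurate scales, which is what forces both the cluster decomposition and the inductive structure.
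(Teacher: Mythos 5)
The paper does not prove this statement: Theorem~\ref{BG23} is quoted verbatim from \cite[Theorem~1.1]{BG23}, and the body of the manuscript neither reproduces nor sketches an argument for it, so there is no in-paper proof to compare your attempt against. Evaluated on its own terms, your reconstruction correctly identifies the two ingredients one would expect: effective higher-order mixing for the $G$-invariant measure $\mu$ (Theorem~\ref{BEG}, responsible for the $\|a_i-a_j\|$ terms in $\overline{\Delta}$) and an effective Wiener/equidistribution estimate for the translates $\exp(a)_*\nu$ with $a\in E$ obtained by thickening along the expanded $U_{m,n}$ (responsible for the $\lfloor a_i\rfloor$ terms). Your diagnosis of the central obstruction is also correct: grouping $\prod_i\varphi_i\circ\exp(a_i)=\Psi\circ\exp(a_1)$ and applying the cocycle bound $S_d(\phi\circ a)\le S_d(\phi)e^{\rho\|a\|}$ to $\Psi$ loses control precisely when the configuration $\{a_i\}$ spreads over several incommensurate scales.

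The genuine gap is that step~(b), which you flag as the crux, is described but not executed. You assert that a dyadic cluster decomposition together with anisotropic $U_{m,n}$-bumps, with widths in each root coordinate tuned to the current minimal scale, allows one to promote one factor per inductive step at cost $O(e^{-c\,\overline{\Delta}})$ each, but you never choose those widths, never verify that the perturbation incurred on the non-promoted factors is also $O(e^{-c\,\overline{\Delta}})$ rather than merely $O(1)$, and never make precise when the passage from a $\nu$-correlation to a $\mu$-correlation occurs so that Theorem~\ref{BEG} becomes applicable. In particular the sentence that between-cluster separations are ``absorbed by invoking the first input with fewer factors'' is not an argument: after one peeling step one is still integrating against $\nu$, not $\mu$, so effective $\mu$-mixing cannot be invoked directly at that stage. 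As written this is a credible plan pointing at the right lemmas and the right difficulty, but filling in the quantitative content of step~(b) --- the matched bump widths, the bookkeeping of Sobolev losses across the induction, and the uniformity in $h$ forcing $d=d(h)$ --- is precisely what \cite{BG23} carries out and what is missing here.
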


\begin{proof}[Proof of Theorem \ref{cor:diophantine}]
{Again we apply} Theorem \ref{thm:2}, {this time} with $\nu$ {as in Theorem \ref{cor:diophantine}}.
In this case, the mixing assumptions \eqref{eq:4mix} for functions in $\mathcal{F}=\mathscr{C}^{\infty}_c(X)$ and $h\geq 2$ follow from Theorem \ref{BG23}. However, a priori, the error term in Theorem \ref{BG23} depends on the function $\bar\Delta\leq \Delta$. To conclude, it suffices to show that there exists an absolute constant $\sigma'>0$ {(depending only on $M$)} such that for any $t>0$ and any choice of $h$ elements $a_1,\dotsc,a_h\in M\subset E$ it holds that
\begin{equation}
\label{eq:deltadeltabar}
\Delta(ta_1,\dotsc,ta_h)\leq \sigma' \bar\Delta(ta_1,\dotsc, ta_h).    
\end{equation}
To see this, observe that for any $t>0$ and any $a,b\in M$ we have that
$$\frac{\|ta-tb\|}{\lfloor ta\rfloor}=\frac{\|a-b\|}{\lfloor a\rfloor}.$$
Now, recall that $M$ {is assumed to be} compact. Therefore the function
$$(a,b)\mapsto \frac{\|a-b\|}{\lfloor a\rfloor}$$
is well-defined for any $a,b\in M$ and must admit a maximum
${m_0}$. 
Thus 
for any \linebreak $a,b\in M$ and any $t>0$ it holds that $\|ta-tb\|\leq {m_0}\lfloor ta\rfloor$. Hence \eqref{eq:deltadeltabar} holds with $\sigma'=\max\{1,{m_0}\}$.
\end{proof}

\section{Proof of Theorems \ref{thm:genericgeneral} and \ref{thm:2}}
\label{sec:proofs}

To prove Theorem \ref{thm:genericgeneral} we begin by estimating the $h$-th moment of the Birkhoff averages via effective mixing. With this estimate at hand, Theorem \ref{thm:genericgeneral} is obtained as a consequence of the Borel–Cantelli lemma. We now establish the required moment estimate. Let us denote by $H_k$ the set
$$H_{k}:=\{(s_1,\dotsc,s_k)\in\mb R^k:s_k\geq 0\}.$$

\begin{lem}
\label{lem:4mom}
Let $J\subset \mb R^{k}$ be a {bounded open subset of the upper-half space $H_k$}, and let $\gamma:J\to {M}$ be the parametrization of a $k$-dimensional manifold of class $\mathscr{C}^1$ such that 
\begin{equation}
\label{eq:lambdas}
\lambda_{\sup}:=\sup_{\substack{\bs s,\bs s'\in J \\ \bs s\neq\bs s'}}\frac{\|\gamma(\bs s)-\gamma(\bs s')\|}{\|\bs s-\bs s'\|}<\infty\quad\mbox{and}\quad\lambda_{\inf}:=\inf_{\substack{\bs s,\bs s'\in J \\ \bs s\neq\bs s'}}\frac{\|\gamma(\bs s)-\gamma(\bs s')\|}{\|\bs s-\bs s'\|}>0.
\end{equation}
Let $f:J\to (0,\infty)$ be a bounded Borel function, and let $\mc F$, $S$, and $\nu$ be as in Theorem \ref{thm:genericgeneral}, {with Assumptions \eqref{eq:norm1}--\eqref{eq:4mix} satisfied}. 
Then there exists a constant $\eta>0$ (depending on $h,\rho,\sigma$, and the parametrization $\gamma$) such that for any function $\varphi\in \mc F$ with $\mu(\varphi)=0$ and any $0<{b}<1$ we have that
{\begin{multline*}
\int_{X}\left(\int_{J}\varphi\left(t\gamma(\bs s).x\right)f(\bs s)\,\dd \bs s\right)^h\,\dd\nu(x)
 \\
=\|f\|_{\infty}^h\cdot O\left(A_{\varphi,h}^{\lfloor h/2\rfloor}\cdot {b}^{k(h-\lfloor h/2\rfloor)}+B_{\varphi,h}\cdot {b}^{k(h-1)}+C_{\varphi,h}\cdot e^{-{b} t}\right),
\end{multline*}
where
$$A_{\varphi,h}:=\max\left\{\mu(|\varphi|),\mu(|\varphi|^h)\right\},\quad B_{\varphi,h}:=\max\left\{\|\varphi\|^h_{\infty},A_{\varphi,h}\right\},$$
and $C_{\varphi,h}:=S(\varphi)^h$.}
\end{lem}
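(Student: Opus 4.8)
The plan is to expand the $h$-th power of the integral as an $h$-fold integral over $J^h$ and then control it using the effective multiple mixing estimate \eqref{eq:4mix}. Concretely, I would write
\[
\int_X\left(\int_J \varphi(t\gamma(\bs s).x)f(\bs s)\,\dd\bs s\right)^h\dd\nu(x)=\int_{J^h}\left(\int_X\prod_{i=1}^h\varphi\big(t\gamma(\bs s_i).x\big)\dd\nu(x)\right)\prod_{i=1}^h f(\bs s_i)\,\dd\bs s_1\cdots\dd\bs s_h,
\]
by Fubini, which is legitimate since $\varphi\in\mathrm L^\infty$ and everything is bounded. Since $\mu(\varphi)=0$, the main term $\mu(\varphi)^h$ in \eqref{eq:4mix} vanishes, so the inner $X$-integral is bounded by $c'S(\varphi)^h e^{-\sigma\Delta(t\gamma(\bs s_1),\dots,t\gamma(\bs s_h))}$ plus the contribution of the ``diagonal'' terms. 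The key point is that using the inequalities \eqref{eq:lambdas} relating $\|\gamma(\bs s)-\gamma(\bs s')\|$ to $\|\bs s-\bs s'\|$ (with both constants finite and positive), one has $\Delta(t\gamma(\bs s_1),\dots,t\gamma(\bs s_h))\asymp t\cdot\min_{i\ne j}\|\bs s_i-\bs s_j\|$, so the relevant quantity is how clustered the points $\bs s_1,\dots,\bs s_h$ are in $J\subset\R^k$.

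Next I would introduce a scale parameter $b\in(0,1)$ and partition $J^h$ according to the coarsest clustering structure of the tuple $(\bs s_1,\dots,\bs s_h)$ at scale $b$: say the points fall into $r$ clusters, each of diameter $\lesssim b$ and with distinct clusters separated by $\gtrsim b$ (this can be made precise via a stopping-scale/chaining argument, or by the standard combinatorial device of summing over set partitions of $\{1,\dots,h\}$ and over which pairs of distances are below versus above $b$). On such a piece, applying \eqref{eq:4mix} \emph{within} each cluster — grouping $\varphi\circ t\gamma(\bs s_i)$ for $i$ in one cluster and treating the product of these groups as $r$ "super-functions" whose mutual $\Delta$ is $\gtrsim bt$ — one factors the $X$-integral. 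The within-cluster integral of a product of up to $h$ copies of $\varphi$ is bounded, using \eqref{eq:4mix} again or crudely by $\|\varphi\|_\infty^{(\text{cluster size})-1}\mu(|\varphi|)$ when the cluster has $\ge2$ points and by $\mu(\varphi)=0$ when it is a singleton (this is where $\mu(\varphi)=0$ kills many terms). The volume of the piece with clustering pattern having clusters of sizes $h_1,\dots,h_r$ (so $\sum h_j=h$) is $\lesssim \prod_j b^{k(h_j-1)}=b^{k(h-r)}$. Summing over patterns, the dominant contributions are: $r=\lfloor h/2\rfloor$ clusters of size $2$ (or sizes $2$ with one of size $3$ when $h$ is odd) giving the $A_{\varphi,h}^{\lfloor h/2\rfloor}b^{k(h-\lfloor h/2\rfloor)}$ term; the single cluster $r=1$ giving $B_{\varphi,h}b^{k(h-1)}$; and the "fully separated" regime $r=h$ where $\Delta\gtrsim bt$ so \eqref{eq:4mix} gives the exponentially small $C_{\varphi,h}e^{-\sigma' bt}\asymp S(\varphi)^h e^{-bt}$ term (after absorbing constants and rescaling $b$). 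Since there are only finitely many (depending on $h,k$) such patterns, and each $\|f(\bs s_i)\|$ contributes $\|f\|_\infty$, one gets the claimed bound.

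I would carry this out in the following order: (1) Fubini to reduce to the $J^h$ integral; (2) record the comparison $\Delta(t\gamma(\bs s_i))\asymp t\min\|\bs s_i-\bs s_j\|$ from \eqref{eq:lambdas}; (3) set up the partition of $J^h$ by clustering patterns at scale $b$, carefully handling the boundary case $J\subset H_k$ so that volume estimates $b^{k(h_j-1)}$ per cluster survive (a cluster meeting $\partial H_k$ still has the same volume bound, so this is harmless); (4) on each pattern, apply \eqref{eq:4mix} with the functions grouped by cluster to factor the $X$-integral, using $\mu(\varphi)=0$ to discard singleton clusters and bounding within-cluster products by $A_{\varphi,h}$, $B_{\varphi,h}$, or the exponential term as appropriate; (5) tally the contributions and identify the three dominant terms. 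The main obstacle I expect is step (3)–(4): making the clustering decomposition clean enough that one may legitimately invoke \eqref{eq:4mix} with grouped functions — one needs that the grouped "super-functions" $\prod_{i\in\text{cluster }j}\varphi\circ t\gamma(\bs s_i)$ still lie in $\mc F$ with controlled $S$-norm (this is exactly why $\mc F$ is assumed to be a sub-algebra and why \eqref{eq:norm1}, \eqref{eq:norm2} are imposed: $S$ of a product of $\le h$ shifted copies of $\varphi$ is $\le c^{h-1}S(\varphi)^h e^{\rho h t\,\mathrm{diam}(M)}$, and the exponential factor in $t$ must be shown to be dominated by $e^{-\sigma bt}$ for the separated regime, which forces the final exponent to be $e^{-bt}$ after renaming constants) — and that the combinatorial bookkeeping over partitions does not lose powers of $b$ beyond what is claimed.
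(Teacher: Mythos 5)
Your architecture matches the paper's proof almost step for step: Fubini to reduce to a $J^h$-integral, the comparison $\Delta(t\gamma(\bs s_1),\dots,t\gamma(\bs s_h))\asymp t\min_{i\neq j}\|\bs s_i-\bs s_j\|$ via \eqref{eq:lambdas}, a multi-scale clustering decomposition of $J^h$, application of \eqref{eq:4mix} to cluster-grouped products (relying on the algebra/Sobolev hypotheses \eqref{eq:norm1}–\eqref{eq:norm2} exactly as you say), vanishing of singleton contributions because $\mu(\varphi)=0$, and the volume count $b^{k(h-r)}$ for a pattern with $r$ clusters. The paper's precise version of your ``stopping-scale/chaining argument'' is the covering lemma of Bj\"orklund--Gorodnik (Lemma~\ref{lem:covering}), which partitions $J^h$ via a ladder of $h+1$ scales $\beta_1<\alpha_1=3\beta_1<\beta_2<\dots<\beta_{h+1}$ all proportional to $b$, with the geometric spacing $6h\rho\lambda_{\sup}/(\sigma\lambda_{\inf})$ chosen exactly so that the $S$-norm growth $e^{h\rho\lambda_{\sup}\alpha_l t}$ from \eqref{eq:norm1} is dominated by the mixing decay $e^{-\sigma\lambda_{\inf}\beta_{l+1}t}$ from \eqref{eq:4mix} on each piece, yielding the $e^{-bt}$ term; you identify this tension but leave the scale calibration unresolved.

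There is one genuine gap in your within-cluster estimate. You propose bounding $\mu(\varphi_{I,t,\ul{\bs s}})$ either by invoking \eqref{eq:4mix} ``again'' or crudely by $\|\varphi\|_\infty^{\#I-1}\mu(|\varphi|)$. The first does not apply: \eqref{eq:4mix} controls $\nu$-correlations, whereas here one needs to estimate the $\mu$-integral $\mu(\varphi_{I,t,\ul{\bs s}})=\prod_{I\in\mc Q}\mu(\varphi_{I,t,\ul{\bs s}})$ that arises as the main term after the mixing step, and inside a cluster the separation is small so \eqref{eq:4mix} gives nothing. The crude bound is too weak: for a partition with clusters of sizes $h_1,\dots,h_r$, it yields $\prod_j\|\varphi\|_\infty^{h_j-1}\mu(|\varphi|)=\|\varphi\|_\infty^{h-r}\mu(|\varphi|)^r$, which is controlled by $B_{\varphi,h}$ but not by $A_{\varphi,h}^{\lfloor h/2\rfloor}$; the latter is strictly smaller in the regime relevant to the application (where $A_{\varphi,h}\sim\mu(\Omega_t)\to 0$ but $\|\varphi\|_\infty\asymp 1$), and the factor $A_{\varphi,h}^{\lfloor h/2\rfloor}$ is precisely what powers the Borel--Cantelli step in Lemma~\ref{lem:intersection}. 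The paper instead applies the generalized H\"older inequality together with $\mu$-invariance of the action to obtain the sharp bound
\begin{equation*}
\mu(\varphi_{I,t,\ul{\bs s}})\leq \prod_{i\in I}\left(\int_X\left|\varphi\left(t\big(\gamma(\bs s_i)-\gamma(\bs s_{i(I)})\big).x\right)\right|^{\#I}\dd\mu(x)\right)^{1/\#I}=\mu\big(|\varphi|^{\#I}\big)\leq A_{\varphi,h},
\end{equation*}
where the last inequality follows from log-convexity of $L^p$-norms. Replacing your crude within-cluster bound with this one closes the gap, and the rest of your tally over partition patterns then matches the paper exactly.
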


{We remark that the norms $\|\cdot\|$ on $\R^k$ and $\R^d$ appearing in \eqref{eq:lambdas} can be arbitrary: it is clear that the validity of \eqref{eq:lambdas} does not depend on the choice of norms.} The following lemma shows that the hypotheses in Lemma \ref{lem:4mom} are always satisfied for local parametrizations $\gamma$ of $\mathscr{C}^1$ submanifolds.

\begin{lem}
\label{lem:lambdainfsup}
{Let $J$ be a bounded convex open subset of the upper-half space $H_k$, and let $\gamma: J\to M$ be the restriction of a $\mathscr{C}^1$ map defined on a neighborhood of the set $\bar J$ in $H_k$, which has the form
$$\gamma(\bs s)=\big(\bs s,\gamma_{k+1}(\bs s),\dotsc,\gamma_{d}(\bs s)\big)$$
for $\bs s\in J$, perhaps after a permutation of the coordinates. Then the quantities $\lambda_{\sup}$ and $\lambda_{\inf}$ 
in \eqref{eq:lambdas} are positive and finite for the map $\gamma$.}
\end{lem}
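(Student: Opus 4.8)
The plan is to prove Lemma \ref{lem:lambdainfsup} by a direct application of the mean value inequality for $\mathscr{C}^1$ maps on a convex domain, together with the elementary observation that the first $k$ coordinates of $\gamma$ form the identity. For the finiteness of $\lambda_{\sup}$: since $\gamma$ extends to a $\mathscr{C}^1$ map on a neighborhood of the compact set $\bar J$, its total derivative $D\gamma$ is continuous and hence bounded in operator norm on $\bar J$, say by a constant $L$. Because $J$ is convex, for any $\bs s,\bs s'\in J$ the segment $[\bs s,\bs s']$ lies in $J$, and the mean value inequality gives $\|\gamma(\bs s)-\gamma(\bs s')\|\le L\|\bs s-\bs s'\|$; thus $\lambda_{\sup}\le L<\infty$. (As remarked after Lemma \ref{lem:4mom}, the choice of norms on $\R^k$ and $\R^d$ is immaterial, so we may use the Euclidean norms for convenience.)

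For the positivity of $\lambda_{\inf}$, the key point is that $\gamma(\bs s)=(\bs s,\gamma_{k+1}(\bs s),\dotsc,\gamma_d(\bs s))$, so the first $k$ coordinates of $\gamma(\bs s)-\gamma(\bs s')$ are exactly $\bs s-\bs s'$. Hence, using the Euclidean norm,
\begin{equation*}
\|\gamma(\bs s)-\gamma(\bs s')\|\ge \|\bs s - \bs s'\|
\end{equation*}
for all $\bs s,\bs s'\in J$, which immediately yields $\lambda_{\inf}\ge 1>0$. (If one insists on working with an arbitrary norm $\|\cdot\|_*$ on $\R^d$ and $\|\cdot\|_\star$ on $\R^k$, one passes through the Euclidean estimate and absorbs the finitely many norm-equivalence constants; by the remark following Lemma \ref{lem:4mom} this changes nothing.)

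There is essentially no obstacle here: the statement is a soft consequence of convexity of $J$ plus the normal-form assumption on $\gamma$. The only mild subtlety is making sure the mean value inequality applies — this is exactly why convexity of $J$ is assumed, so that segments between points of $J$ stay inside the domain on which $\gamma$ is differentiable — and why $\gamma$ is assumed to extend to a neighborhood of $\bar J$, so that $D\gamma$ is bounded up to the boundary. Combining the two displayed inequalities, $1\le \lambda_{\inf}\le\lambda_{\sup}\le L<\infty$, completes the proof.
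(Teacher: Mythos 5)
Your proof is correct and follows essentially the same approach as the paper: the lower bound comes from the graph form of $\gamma$ (first $k$ coordinates are the identity), and the upper bound from a mean value argument on the convex set $J$ using boundedness of the derivative on $\bar J$. The only cosmetic differences are that you use the vector-valued mean value inequality with the operator norm of $D\gamma$ and Euclidean norms, while the paper applies the one-variable Mean Value Theorem componentwise with supremum norms; both variants are standard and equivalent here.
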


\begin{proof}
 Without loss of generality, let us assume that
$$\gamma(\bs s)=\big(\bs s,\gamma_{k+1}(\bs s),\dotsc,\gamma_{d}(\bs s)\big)$$
for $\bs s\in J$. Then, {if  the norms $\|\cdot \|$ on $\mb R^k$ and $\mb R^d$ are both supremum norms}, it is obvious that
$$\|\gamma(\bs s)-\gamma(\bs s')\|\geq \|\bs s-\bs s'\|$$
for any $\bs s,\bs s'\in J$. This shows that $\lambda_{\inf}\geq 1$.

Now, denote by $\gamma_j$ for $j=1,\dotsc,d$ the components of $\gamma$. Then, by the Mean Value Theorem applied to the function $t\mapsto\gamma\big(t\bs s+(1-t)\bs s'\big)$ for fixed $\bs s\neq\bs s'$ in $J$, we have that
$$\gamma(\bs s')-\gamma(\bs s)=\left(\sum_{i=1}^k\frac{\partial\gamma_j}{\partial s_i}(\theta_j\bs s-(1-\theta_j)\bs s')(s_i-s_i')\right)_{j=1,\dotsc,d}$$
where $\theta_1,\dotsc,\theta_d\in[0,1]$. This implies that
$$\left\|\gamma(\bs s)-\gamma(\bs s')\right\|\leq k\sup_{
\bs \xi\in \bar J,\, 
j=1,\dotsc,d}\|\nabla\gamma_j(\xi)\|\|\bs s-\bs s'\|,$$
showing that $\lambda_{\sup}$ is bounded.
\end{proof}

{\begin{rmk}\label{rem:reduction}
Since $M$ is a compact $\mathscr{C}^1$ submanifold, 
it admits a \emph{finite} atlas where each chart is of the form described in Lemma \ref{lem:lambdainfsup}. Thus, working locally, we may assume from now on that $M$ is globally parametrized by a map $\gamma:J\to M$ satisfying the hypotheses of Lemma \ref{lem:4mom}.     
\end{rmk}}

\subsection{A covering lemma}
To prove Lemma \ref{lem:4mom}, we use a method introduced by Bj\"orklund and Gorodnik which allows us to analyze multiple correlations. Let us start by introducing some notation.

Let $(X,d)$ be any metric space. Fix $h\in \mb N$ and let $I,I'$ be disjoint subsets of $\{1,\dotsc, h\}$. For $x=(x_1,\dots,x_h)\in X^h$ we define 
$$\rho^{I}(x):=\max \{d(x_i,x_j):i,j\in I\}\quad\mbox{and}\quad\rho_{I,I'}(x):=\min\{d(x_i,x_j):i\in I,j\in I'\}.$$
Moreover, for a given partition  $\mc Q$ of the set $\{1,\dotsc,h\}$ we set
$$\rho^{\mc Q}(x):=\max\left\{\rho^{I}(x):I\in \mc Q\right\}\quad\mbox{and}\quad\rho_{\mc Q}(x):=\min\left\{\rho_{I,I'}(x):I,I'\in \mc Q,\ I\neq I'\right\}.$$
Finally, for $\mc Q$ as above, with $\#\mc Q\geq 2$, and $0\leq \alpha<\beta$ we let
$$\Delta_{\mc Q}(\alpha,\beta):=\left\{x\in X^{h}:\rho^{\mc Q}(x)\leq \alpha\ \mbox{and }\rho_{\mc Q}(x)>\beta\right\}$$
and
$$\Delta(\alpha):=\left\{x\in X^{h}:\rho^{\{1,\dotsc, h\}}(x)\leq \alpha\right\}.$$
{We now state the required covering lemma \cite[Proposition 6.2]{BG20new}. While \cite[Proposition 6.2]{BG20new} is stated under the assumption that $X$ is a locally compact group, the argument presented there applies equally well to any metric space.}
\begin{lem}
\label{lem:covering}
Let $h\in \mb N$ and let $\alpha_l,\beta_l$ for $l=0,\dotsc,h+1$ be real numbers such that
$$0=\alpha_0<\beta_1<\alpha_1=3\beta_1<\beta_2<\dotsb<\beta_h<\alpha_h=3\beta_h<\beta_{h+1}.$$
Then
$$X^{h}=\Delta(\beta_{h+1})\cup\bigcup_{l=0}^h\bigcup_{\mc Q:\# \mc Q\geq 2}\Delta_{\mc Q}(\alpha_l,\beta_{l+1}).$$
\end{lem}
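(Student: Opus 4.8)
The plan is to proceed by a direct combinatorial argument, fixing an arbitrary point $x=(x_1,\dots,x_h)\in X^{h}$ and exhibiting a member of the claimed union that contains it. First I would dispose of the trivial case: if $\rho^{\{1,\dots,h\}}(x)\le\beta_{h+1}$, then $x\in\Delta(\beta_{h+1})$ and we are done. So assume $\rho^{\{1,\dots,h\}}(x)>\beta_{h+1}$; in particular the $h$ points are not all within distance $\beta_{h+1}$ of each other, and we must locate $x$ inside some $\Delta_{\mc Q}(\alpha_l,\beta_{l+1})$ with $\#\mc Q\ge 2$.

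The key device is a ``scale-selection'' step exploiting that there are $h+1$ available gaps $\beta_1<\alpha_1=3\beta_1<\beta_2<\dots<\beta_h<\alpha_h=3\beta_h<\beta_{h+1}$ but only $h$ points. For each $l=0,\dots,h$ consider the equivalence relation $\sim_l$ on $\{1,\dots,h\}$ generated by declaring $i$ and $j$ related whenever $d(x_i,x_j)\le\beta_{l+1}$ (i.e.\ the connected components of the graph on $\{1,\dots,h\}$ with an edge between $i,j$ when $d(x_i,x_j)\le\beta_{l+1}$); let $\mc Q_l$ be the resulting partition. As $l$ increases the threshold $\beta_{l+1}$ grows, so the partitions only get coarser: $\mc Q_0$ refines $\mc Q_1$ refines $\dots$ refines $\mc Q_h$. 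Now $\mc Q_0$ has at most $h$ blocks (possibly all singletons) and, since $\rho^{\{1,\dots,h\}}(x)>\beta_{h+1}>\beta_{l+1}$ for every $l\le h$, $\mc Q_h$ has at least $2$ blocks. By the pigeonhole principle applied to the chain of $h+1$ nested partitions of an $h$-element set, there is an index $l\in\{0,\dots,h\}$ with $\mc Q_l=\mc Q_{l+1}$ — here one has to be slightly careful about the endpoints, but since the number of blocks is a non-increasing integer-valued function of $l$ that starts $\le h$, and a strictly decreasing chain of length $h+1$ is impossible, two consecutive partitions coincide; one checks this can be arranged with the relevant index in range. Set $\mc Q:=\mc Q_l=\mc Q_{l+1}$, which has $\#\mc Q\ge 2$ (it is coarser than $\mc Q_0$ only in the sense of the chain; more precisely one takes the \emph{largest} such $l$ so that $\mc Q$ is as coarse as possible, guaranteeing $\#\mc Q\ge 2$ via the $\mc Q_h$ bound — I would organize the pigeonhole so that the selected $\mc Q$ sits between the trivial partition into singletons and $\mc Q_h$).

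It remains to verify $x\in\Delta_{\mc Q}(\alpha_l,\beta_{l+1})$, i.e.\ $\rho^{\mc Q}(x)\le\alpha_l=3\beta_l$ and $\rho_{\mc Q}(x)>\beta_{l+1}$. The second inequality is immediate from the definition of $\mc Q=\mc Q_{l+1}$: distinct blocks of $\mc Q_{l+1}$ are exactly the pieces that cannot be joined by an edge of length $\le\beta_{l+1}$, so $d(x_i,x_j)>\beta_{l+1}$ whenever $i,j$ lie in different blocks. For the first inequality, fix a block $I\in\mc Q=\mc Q_l$ and two indices $i,j\in I$; by definition of $\mc Q_l$ they are connected in the $\beta_{l+1}$-graph, but in fact I want a bound in terms of $\beta_l$, so here is where the factor-$3$ spacing $\alpha_{l-1}=3\beta_{l-1}<\beta_l$ enters: a path in the $\beta_{l+1}$-graph inside a single $\mc Q_l$-block can be controlled because consecutive thresholds differ by enough that any such path has a uniformly bounded ``diameter cost'' — the standard Björklund–Gorodnik estimate gives $\rho^I(x)\le 3\beta_l$ from the interleaving $\alpha_{l}=3\beta_{l}$ together with $\alpha_{l-1}<\beta_{l}$. (This is the one spot where the specific numerology $\alpha_l=3\beta_l$ is used and not merely $\alpha_l>\beta_l$.)

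The main obstacle is precisely this last diameter estimate: getting from ``$i$ and $j$ are in the same $\beta_{l+1}$-connected component'' to the clean bound $\rho^{I}(x)\le 3\beta_l$, rather than something that degrades with the number of points in the block. The trick is to argue that within a $\mc Q_l$-block all points are already pairwise within $\beta_l$ — because if two of them were more than $\beta_l$ apart, the block would have split at an earlier scale — so the component structure at scale $\beta_{l+1}$ forces a diameter bound at the coarser scale $3\beta_l$. Once this ``no intermediate-scale clustering'' observation is made precise (which is exactly \cite[Proposition 6.2]{BG20new} and requires only the metric triangle inequality, hence goes through verbatim for an arbitrary metric space $X$), the covering is complete.
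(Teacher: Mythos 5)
The paper does not prove this lemma; it simply cites \cite[Proposition 6.2]{BG20new} and observes that the argument there carries over to arbitrary metric spaces, so there is no in-paper proof to compare against. Your proposal therefore has to stand on its own, and unfortunately it has a genuine gap at exactly the point you yourself flag as ``the main obstacle.''

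Your scale-selection step (take the connected-component partitions $\mc Q_0\preceq\mc Q_1\preceq\dots\preceq\mc Q_h$ at the thresholds $\beta_1,\dots,\beta_{h+1}$, and pigeonhole a non-increasing integer sequence of length $h+1$ starting at $\leq h$ to find $\mc Q_l=\mc Q_{l+1}$) is a reasonable start, and the separation estimate $\rho_{\mc Q}(x)>\beta_{l+1}$ is immediate from the component structure. But two things go wrong. First, $\rho^{\{1,\dots,h\}}(x)>\beta_{h+1}$ does \emph{not} force $\mc Q_h$ to have at least two blocks: a chain $x_1,\dots,x_h$ with consecutive gaps $\beta_{h+1}/2$ has diameter $(h-1)\beta_{h+1}/2>\beta_{h+1}$ yet is connected at threshold $\beta_{h+1}$, so $\mc Q_h$ is the trivial partition. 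Since your choice of $l$ is justified by sandwiching between ``$\mc Q_0$ fine'' and ``$\#\mc Q_h\geq 2$,'' this part of the selection argument is not valid as written. Second, and more seriously, the diameter estimate is unjustified: you assert that within a block of $\mc Q_l$ ``all points are already pairwise within $\beta_l$ --- because if two of them were more than $\beta_l$ apart, the block would have split at an earlier scale.'' This is false. A block of $\mc Q_l$ is a connected component of the $\beta_{l+1}$-graph; two points at distance anywhere in $(\beta_l,\beta_{l+1}]$ can be directly joined by an edge of that graph and hence lie in the same block, and a chain of such edges can produce a block of diameter as large as $(h-1)\beta_{l+1}$, far exceeding $\alpha_l=3\beta_l$. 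Nothing about $\mc Q_l=\mc Q_{l+1}$ (which only says components are stable between thresholds $\beta_{l+1}$ and $\beta_{l+2}$) constrains pairwise distances inside a block at the much smaller scale $\beta_l$. So the needed bound $\rho^I(x)\leq\alpha_l$ is simply not established, and the ``no intermediate-scale clustering'' observation you appeal to is not a consequence of the triangle inequality alone --- if it were, the factor $3$ and the rapidly interlacing sequence $\alpha_l=3\beta_l<\beta_{l+1}$ would be irrelevant. A correct proof has to choose $l$ (and possibly a finer partition than the raw connected components) so that the diameter bound actually holds; that is the substantive content of \cite[Proposition 6.2]{BG20new}, and it is precisely what is missing here.
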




We will use Lemma \ref{lem:covering} to partition the set $J^h$, after possibly removing any overlaps. For the sake of simplicity, we will continue using the symbols $\Delta(\beta_{h+1})$ and $\Delta_{\mc Q}(\alpha_k,\beta_{k+1})$ to denote the smaller sets in this partition, but with the understanding that they do not overlap and are contained in $J^h$. Here, $J$ is equipped with the metric induced by the norm $\|\cdot\|$ on $\R^k$.

\subsection{Proof of Lemma \ref{lem:4mom}} 
By multiplying out the $h$-th power, we obtain that
\begin{equation*}
\int_{X}\left(\int_{J}\varphi\left(t\gamma(\bs s).x\right)f(\bs s)\,\dd \bs s\right)^h\,\dd\nu(x)=\int_{J^h}\int_{X}\prod_{i=1}^h\varphi\big(t\gamma(\bs s_i).x\big)\,\dd\nu(x) \prod_{i=1}^{h}f(\bs s_i)\,\dd\ul{\bs s},
\end{equation*}
where $\ul {\bs s}=(\bs s_1,\dotsc,\bs s_h)\in J^h$. We use the partition induced by Lemma \ref{lem:covering} to subdivide the integration domain $J^h$. In particular, for some choice of the constants $\alpha_l,\beta_l$ (which will be made in the sequel of the proof), we estimate separately the integrals
\begin{equation*}
\int_{\Delta_{\mc Q}(\alpha_l,\beta_{l+1})}\int_{X}\prod_{i=1}^h\varphi\big(t\gamma(\bs s_i).x\big)\,\dd\nu(x) \prod_{i=1}^{h}f(\bs s_i)\,\dd\ul{\bs s}
\end{equation*}
for $l=0,\dotsc,h$ and
\begin{equation}
\label{eq:center}
\int_{\Delta(\beta_{h+1})}\int_{X}\prod_{i=1}^h\varphi\big(t\gamma(\bs s_i).x\big)\,\dd\nu(x) \prod_{i=1}^{h}f(\bs s_i)\,\dd\ul{\bs s}.    
\end{equation}
For each $\emptyset\neq I\subseteq \{1,\dotsc,h\}$ let $i(I)\in I$ be a fixed index, and, for $x\in X$, put
$$\varphi_{I,t,\ul{\bs s}}(x):=\prod_{i\in I}\varphi\left(t\left(\gamma(\bs s_i)-\gamma\left(\bs s_{i(I)}\right)\right).x\right).$$
Then for any partition $\mc Q$ of the set $\{1,\dotsc,h\}$ with $\#\mc Q\geq 2$ and any $0\leq l\leq h$ we have that
\begin{multline}
\label{eq:grouping4}
\int_{\Delta_{\mc Q}(\alpha_l,\beta_{l+1})}\int_{X}\prod_{i=1}^{h}\varphi(t\gamma(\bs s_i).x)\,\dd\nu(x)\prod_{i=1}^{h}f(\bs s_i)\,\dd\ul{\bs s} \\
=\int_{\Delta_{\mc Q}(\alpha_l,\beta_{l+1})}\int_{X}\prod_{I\in\mc Q}\varphi_{I,t,\ul{\bs s}}\left(t\gamma_{\bs{s}_{i(I)}}.x\right)\,\dd\nu(x)\prod_{i=1}^{h}f(\bs s_i)\,\dd\ul{\bs s}.
\end{multline}

Let us start by analyzing \eqref{eq:grouping4}. If $\ul{\bs s}\in \Delta_{\mc Q}(\alpha_l,\beta_{l+1})$, it follows from \eqref{eq:norm1}, \eqref{eq:norm2}, and the definition of $\varphi_{I,t,\ul{\bs s}}$, that for any subset $I\in\mc Q$
$$S(\varphi_{I,t,\ul{\bs s}})\ll S(\varphi)^{\#I}\cdot \exp\left(\# I\rho \max_{i\in I }\left\|\gamma(\bs s_i)-\gamma\left(\bs s_{i(I)}\right)\right\|t\right)\leq S(\varphi)^{\# I}\cdot e^{\# I\rho\lambda_{\sup}\alpha_{l}t}.$$
We now choose the constants $\alpha_l$ and $\beta_l$ to satisfy the relations
$$\begin{cases}
2h\rho\lambda_{\sup}\cdot \alpha_{l}\leq \sigma\lambda_{\inf}\cdot \beta_{l+1} & \mbox{for } l=1,\dotsc,h  \\[2mm]
2b\leq \sigma\lambda_{\inf}\cdot\beta_1\leq \sigma\lambda_{\inf}\cdot \beta_{h+1}\ll {b}
\end{cases},$$
where $0<{b}<1$ is fixed.
In particular, we may set
\begin{equation*}
\begin{cases}
\beta_l:=\dfrac{2b}{\sigma\lambda_{\inf}}\left(\dfrac{6h\rho\lambda_{\sup}}{\sigma\lambda_{\inf}}\right)^{l-1} & \mbox{for }l=1,\dotsc,h+1\\[3mm]
\alpha_{l}:=3\beta_l & \mbox{for }l=1,\dotsc,h
\end{cases},    
\end{equation*}
and $\alpha_0:=0$, where we assume, without loss of generality that, $\sigma<1$ and $\rho>1$.

With this choice of the constants $\alpha_l$ and $\beta_l$, we have that
\begin{equation}
\label{alphabetadiff}
h\rho\lambda_{\sup}\cdot \alpha_{l}-\sigma\lambda_{\inf}\cdot \beta_{l+1}\leq -\sigma\lambda_{\inf}\cdot \beta_{l+1}/2    
\end{equation}
for $l=0,\dotsc,h$. Then, on applying \eqref{eq:4mix} and \eqref{alphabetadiff}, we deduce that for $\ul{\bs s}\in \Delta_{\mc Q}(\alpha_l,\beta_{l+1})$ it holds that
\begin{multline*}
\left|\int_{X}\prod_{I\in\mc Q}\varphi_{I,t,\ul{\bs s}}\left(t\gamma\left(\bs s_{i(I)}\right).x\right)\,\dd\nu(x)-\prod_{I\in\mc Q}\mu(\varphi_{I,t,\ul{\bs s}})\right| \\\ll_{c,c'} S(\varphi)^{h}\cdot e^{(h\rho\lambda_{\sup}\alpha_l-\sigma\lambda_{\inf}\beta_{l+1}) t}
\leq S(\varphi)^{h} \cdot e^{-\sigma\lambda_{\inf}\beta_{l+1} t/2} \leq S(\varphi)^{h}\cdot e^{-bt}, 
\end{multline*}
where $c$ and $c'$ are the constants appearing in Conditions \eqref{eq:norm2} and \eqref{eq:4mix}. In view of this, we conclude that
\begin{multline}
\label{eq:Qalphabeta}    
\int_{\Delta_{\mc Q}(\alpha_l,\beta_{l+1})}\int_{X}\prod_{i=1}^h\varphi\big(t\gamma(\bs s_i).x\big)\,\dd\nu(x) \prod_{i=1}^{h}f(\bs s_i)\,\dd\ul{\bs s} \\
= \int_{\Delta_{\mc Q}(\alpha_l,\beta_{l+1})}\prod_{I\in\mc Q}\mu\left(\varphi_{I,t,\ul{\bs s}}\right)\prod_{i=1}^hf(\bs s_i)\,\dd\ul{\bs s} +O\left(\|f\|_{\infty}^h S(\varphi)^h\cdot e^{-{b}t}\right).
\end{multline}

{Combining \eqref{eq:Qalphabeta} and \eqref{eq:center}, we obtain
\begin{align}
& \int_{X}\left(\int_{J}\varphi\left(t\gamma(\bs s).x\right)f(\bs s)\,\dd \bs s\right)^h\,\dd\nu\nonumber \\[2mm]
& =\sum_{\mc Q:\#\mc Q\geq 2}\sum_{l=0}^{h}\int_{\Delta_{\mc Q}(\alpha_l,\beta_{l+1})}\prod_{I\in\mc Q}\mu\left(\varphi_{I,t,\ul{\bs s}}\right)\prod_{i=1}^hf(\bs s_i)\,\dd\ul{\bs s}\nonumber \\[2mm]
& +\int_{\Delta(\beta_{h+1})}\int_{X}\prod_{i=1}^h\varphi\big(t\gamma(\bs s_i).x\big)\,\dd\nu(x)\prod_{i=1}^hf(\bs s_i)\,\dd\ul{\bs s}+O_h\left(\|f\|_{\infty}^hS(\varphi)^h\cdot  e^{-{b}t}\right).\label{eq:est}
\end{align}
Further, we note that, by the generalized H\"{o}lder Inequality, for any partition $\mc Q$ of $\{1,\dotsc,h\}$ and any subset $I$ of $\mc Q$ of cardinality at least $2$, it holds that
\begin{multline*}
  \mu\left(\varphi_{I,t,\ul{\bs s}}\right)\leq \int_{X} \prod_{i\in I}\left|\varphi\left(t\left(\gamma(\bs s_i)-\gamma\left(\bs s_{i(I)}\right)\right).x\right)\right|\dd \mu(x) \\
  \leq \prod_{i\in I}\left(\int_X\left|\varphi\left(t\left(\gamma(s_i)-\gamma\left(s_{i(I)}\right)\right).x\right)\right|^{\# I}\dd \mu(x)\right)^{\frac{1}{\# I}}
  =\mu \left(|\varphi|^{\#I}\right),
\end{multline*}
where we used invariance of the measure $\mu$ in the last equality.\\
Moreover, we make the crucial observation that whenever $\mc Q$ contains a singleton, it holds that
$$\prod_{I\in\mc Q}\mu\left(\varphi_{I,t,\ul{\bs s}}\right)=0,$$
since if $I$ is a singleton, $\mu\left(\varphi_{I,t,\ul{\bs s}}\right)=\mu(\varphi)=0$.
In view of this, the main term in \eqref{eq:est} is bounded above by
\begin{multline*}
\sum_{\substack{\mc Q:\#\mc Q\geq 2 \\ \# I\geq 2\ \forall\,I\in\mc Q}}\sum_{l=0}^{h}\operatorname{Leb}\big(\Delta_{\mc Q}(\alpha_l,\beta_{l+1})\big)\cdot \|f\|_{\infty}^h\prod_{I\in\mc Q}\mu\left(|\varphi|^{\# I}\right)+\operatorname{Leb}\big(\Delta(\beta_{h+1})\big)\cdot \|f\|_{\infty}^h\|\varphi\|_{\infty}^{h} \\
\ll_{h,\rho,\sigma,\lambda_{\sup},\lambda_{\inf}} \|f\|_{\infty}^h\sup_{\substack{\mc Q:\#\mc Q\geq 2 \\ \# I\geq 2\ \forall\,I\in\mc Q}}\prod_{I\in\mc Q}\mu\left(|\varphi|^{\# I}\right)\cdot {b}^{k(\# I-1)}+\|f\|_{\infty}^{h}\|\varphi\|_{\infty}^h\cdot {b}^{k(h-1)} \\[2mm]
\ll_h\|f\|_{\infty}^h\cdot \left(\sup_{\substack{\mc Q:\#\mc Q\geq 2 \\ \# I\geq 2\ \forall\,I\in\mc Q}}\left(\sup_{i=1}^h\mu\left(|\varphi|^i\right)\right)^{\# \mc Q}\cdot {b}^{k(h-\#\mc Q)}+\|\varphi\|_{\infty}^h\cdot {b}^{k(h-1)}\right) \\[2mm]
\ll_h \|f\|_{\infty}^h\cdot\left(A_{\varphi,h}^{\lfloor h/2\rfloor}\cdot {b}^{k(h-\lfloor h/2\rfloor)}+B_{\varphi,h}\cdot {b}^{k(h-1)}\right),  
\end{multline*}
where
$$A_{\varphi,h}:=\max\left\{\mu(|\varphi|),\mu(|\varphi|^h)\right\}\quad\text{and}\quad B_{\varphi,h}:=\max\left\{\|\varphi\|^h_{\infty},A_{\varphi,h}\right\}.$$
The upshot is that, for any $0<{b}<1$ one has that
\begin{multline*}
\int_{X}\left(\int_{J}\varphi\left(t\gamma(\bs s).x\right)f(\bs s)\,\dd\bs s\right)^h\,\dd\nu \\
=\|f\|_{\infty}^h\cdot O\left(A_{\varphi,h}^{\lfloor h/2\rfloor}\cdot {b}^{k(h-\lfloor h/2\rfloor)}+B_{\varphi,h}\cdot {b}^{k(h-1)}+S(\varphi)^h\cdot e^{-{b}t}\right),  
\end{multline*}
where the implicit constant depends on $c,c',h,\rho,\sigma,\lambda_{\sup},$ and $\lambda_{\inf}$. This concludes the proof.}

\subsection{Completion of the proof {of Theorem \ref{thm:genericgeneral}}}
Building on the previous bounds, we now establish the following result, which immediately implies Theorem \ref{thm:genericgeneral}.

\begin{lem}
\label{lem:BC1}
Let $J\subset\mb R^k$ be a {bounded open subset of the upper-half space $H_k$}, let $\gamma:J\to {M}$ be as in Lemma~\ref{lem:4mom}, and let {$f
$} be the density of the pull-back measure induced by $\vol_k$ through $\gamma$ on the set $J$, namely:
$$f(\bs s):=\sqrt{\det(\nabla\gamma(\bs s)\nabla\gamma(\bs s)^{T})}.$$
Suppose that $t_n$ is a sequence satisfying \eqref{tn} and that $\varphi\in\mc F$ is a function with $\mu(\varphi)=0$. Then for any $\delta>0$ and $\nu$-a.e. $x\in X$ there exists $N\in \mb N$ such that for $n\geq N$ it holds that
$$\left|\int_{J}\varphi\left(t_n\gamma(\bs s).x\right)f(\bs s)\,\dd \bs s\right|\leq t_n^{-k/2+1/h+\delta}.$$    
\end{lem}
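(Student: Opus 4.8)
The plan is to apply the moment estimate from Lemma \ref{lem:4mom} together with the Borel--Cantelli lemma. First I would set $F_n(x):=\int_J\varphi(t_n\gamma(\bs s).x)\,f(\bs s)\,\dd\bs s$; note $f$ is bounded on $J$ since $\gamma$ is $\mathscr C^1$ on a neighborhood of $\bar J$, so the hypotheses of Lemma \ref{lem:4mom} are met. By Chebyshev's inequality applied to the (even) moment of order $h$, for any threshold $\varepsilon_n>0$ one has
\[
\nu\big(\{x: |F_n(x)|>\varepsilon_n\}\big)\leq \varepsilon_n^{-h}\int_X |F_n(x)|^h\,\dd\nu(x)\leq \varepsilon_n^{-h}\cdot \|f\|_\infty^h\cdot O\!\left(A_{\varphi,h}^{\lfloor h/2\rfloor} b^{k(h-\lfloor h/2\rfloor)}+B_{\varphi,h}\, b^{k(h-1)}+C_{\varphi,h}\,e^{-bt_n}\right),
\]
where the parameter $0<b<1$ is ours to choose and may depend on $n$. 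I would take $b=b_n$ of the form $b_n = t_n^{-1/2}$ (possibly times a logarithmic factor), so that the exponential term $C_{\varphi,h}e^{-b_nt_n}=S(\varphi)^h e^{-t_n^{1/2}}$ is summable over $n$ — here the lower bound $t_{n+1}-t_n\geq\zeta$ from \eqref{tn} guarantees $t_n\gtrsim \zeta n$, hence $\sum_n e^{-t_n^{1/2}}<\infty$, which handles the last term outright.

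The main work is balancing the two polynomial-in-$b$ terms against $\varepsilon_n$. With $b_n\asymp t_n^{-1/2}$ the dominant polynomial term is $b_n^{k(h-\lfloor h/2\rfloor)}\asymp t_n^{-k(h-\lfloor h/2\rfloor)/2}$ (the term $b_n^{k(h-1)}$ is no larger since $h-1\geq h-\lfloor h/2\rfloor$), so the $h$-th moment is $O\big(t_n^{-k(h-\lfloor h/2\rfloor)/2}\big)$ up to constants depending on $\varphi,h$. Choosing $\varepsilon_n := t_n^{-k/2+1/h+\delta}$, the summand becomes
\[
\varepsilon_n^{-h}\cdot t_n^{-k(h-\lfloor h/2\rfloor)/2}
= t_n^{\,kh/2-1-\delta h}\cdot t_n^{-k(h-\lfloor h/2\rfloor)/2}
= t_n^{\,k\lfloor h/2\rfloor/2-1-\delta h}.
\]
Since $\lfloor h/2\rfloor \le h/2$, the exponent is at most $kh/4-1-\delta h$; to force summability I would (as is standard in this type of argument) first pass to a subsequence, or more simply apply the estimate along the sequence $t_n$ and use $t_n\asymp n$ together with choosing $h$ large enough that $k\lfloor h/2\rfloor/2 - 1 - \delta h$ can be made $<-1$ — but that fails for large $h$. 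The correct fix is to not take $b_n$ as small as $t_n^{-1/2}$ uniformly: instead optimize $b_n$ in $n$ so that $\varepsilon_n^{-h}$ times the moment bound is summable while keeping $\varepsilon_n$ of the claimed size. Concretely, one checks that with $\varepsilon_n=t_n^{-k/2+1/h+\delta}$ and $b_n = t_n^{-1/2}$, the bound $\nu(|F_n|>\varepsilon_n)\ll t_n^{-(1+\delta h)+k\lfloor h/2\rfloor/2}$ may not be summable, so one instead applies Borel--Cantelli along a sufficiently sparse subsequence $t_{n_j}$ (e.g. $n_j$ chosen so $t_{n_j}\geq 2^j$), deduces the bound there, and then interpolates to all $n$ using that $\varphi$ is bounded and $t\mapsto t^{-k/2+1/h+\delta}$ varies slowly between consecutive terms of the sparse subsequence — this last interpolation step is where the rate $t_n^{-k/2+1/h+\delta}$ (rather than $t_n^{-k/2+1/h}$) is actually needed, absorbing the gap.

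I expect the main obstacle to be precisely this sparsification-and-interpolation bookkeeping: getting a Borel--Cantelli-summable series requires either $h$ fixed and a sparse subsequence, and then one must argue that control on the sparse subsequence upgrades to control on the full sequence $(t_n)$ without losing more than the $t_n^\delta$ slack. The hypothesis \eqref{tn} that the $t_n$ are $\zeta$-separated is exactly what makes $t_n\gtrsim n$ and hence makes the sparse subsequence extraction and the slow-variation interpolation work cleanly. Once the set $Y$ of full $\nu$-measure is produced from Borel--Cantelli, the conclusion for $n\geq N(x)$ is immediate, and Lemma \ref{lem:BC1} — applied to $\varphi-\mu(\varphi)$ in place of $\varphi$, chart by chart over the finite atlas of $M$ from Remark \ref{rem:reduction}, and summing the $O(t_n^{-k/2+1/h+\delta})$ contributions — yields Theorem \ref{thm:genericgeneral}.
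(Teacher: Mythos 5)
Your overall plan --- apply Markov's inequality to the $h$-th moment from Lemma~\ref{lem:4mom}, then Borel--Cantelli along $(t_n)$ --- is exactly the paper's strategy. However, your choice of the free parameter $b$ is off in a way you notice but do not repair correctly, and your proposed salvage (sparse subsequence plus interpolation) would actually break the lemma rather than fix it.

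The key step you miss is that $b$ should be taken \emph{much smaller} than $t_n^{-1/2}$: the paper sets $b = t_n^{-1+\delta/k}$. This makes the dominant polynomial term
\[
b^{\,k(h-\lfloor h/2\rfloor)} \asymp t_n^{(-k+\delta)(h-\lfloor h/2\rfloor)} \le t_n^{(-k+\delta)h/2},
\]
so that against $\varepsilon_n^{-h}=t_n^{kh/2-1-h\delta}$ one gets
\[
\nu(S_n)\ll t_n^{kh/2-1-h\delta}\cdot t_n^{-kh/2+\delta h/2}=t_n^{-1-h\delta/2},
\]
while the exponential term $\varepsilon_n^{-h}e^{-bt_n}=t_n^{kh/2-1-h\delta}e^{-t_n^{\delta/k}}$ decays super-polynomially. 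Since $t_{n+1}-t_n\ge\zeta$ forces $t_n\gtrsim n$, the series $\sum_n\nu(S_n)$ converges outright, and Borel--Cantelli finishes the argument without any subsequence device. (One also checks the subordinate term $b^{k(h-1)}$ gives exponent $-kh/2+k-1-\delta\le -1-\delta$, so it too is summable.)

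Your alternative --- pass to a dyadically sparse subsequence $t_{n_j}$ and then interpolate to all $n$ using boundedness of $\varphi$ and slow variation of $t\mapsto t^{-k/2+1/h+\delta}$ --- does not work at this stage. Slow variation of the target rate is irrelevant unless you can also control how $\int_J\varphi(t\gamma(\bs s).x)f(\bs s)\,\dd\bs s$ changes as $t$ varies, and that requires a Lipschitz assumption on the $\R^d$-action on $X$. Lemma~\ref{lem:BC1} (and Theorem~\ref{thm:genericgeneral}, for which it is used) is precisely the \emph{discrete-time} statement that makes no such assumption; the Lipschitz hypothesis appears only in Theorem~\ref{thm:2}, where the interpolation between nearby times is performed as a separate, later step with a different rate bookkeeping. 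Moreover, consecutive terms of a dyadic subsequence differ by a multiplicative factor of $2$, which is not ``slow'' variation in any sense that mere boundedness of $\varphi$ would allow you to exploit; and between $t_{n_j}$ and $t_{n_{j+1}}$ there may sit arbitrarily many terms $t_n$ of the original sequence that you would have no way to control. So the correct fix is the optimization of $b_n$ you alluded to but did not carry out, not a sparsification.
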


\begin{proof}
For $n\in\mb N$ let
$$S_{n}:=\left\{x\in X:\left(\int_{J}\varphi\left(t_n\gamma(\bs s)\cdot x\right)f(\bs s)\,\dd \bs s\right)^h> t_n^{-kh/2+1+h\delta}\right\}.$$
On applying Markov's Inequality and Lemma \ref{lem:4mom}, with ${b}=t_n^{-1+\delta/k}$, we find that
\begin{equation*}\nu(S_n)\leq \frac{1}{t_n^{-kh/2+1+h\delta}}\int_{X}\left(\int_{J}\varphi\left(t_n\gamma(\bs s)\cdot x\right)f(\bs s)\,\dd \bs s\right)^h\,\dd\nu\ll_{k,h,\gamma,\varphi,\delta} t_n^{-1-h\delta/2}. 
\end{equation*}
Since $t_n$ satisfies \eqref{tn}, it follows that $\sum_n \nu(S_n)<\infty$. 
Hence, by the Borel--Cantelli Lemma, we have that for $\nu$-almost every $x\in X$ there exists $N\in \N$ (depending on $x$) such that for every $n\geq N$ it holds that
$$\left|\int_{J}\varphi\left(t_n\gamma(\bs s).x\right)f(\bs s)\,\dd \bs s\right|\leq t_n^{-k/2+1/h+\delta}.$$
\end{proof}

\subsection{Proof of Theorem \ref{thm:2}}

The proof is analogous to that of Theorem \ref{thm:genericgeneral}, with the only difference that the sequence $t_n$ is chosen to have decreasing spacing. Let $J,\gamma,$ and $f$ be as in the previous subsection. Let also $t_n:=n^{1-A}$, where $0<A<1$ is a constant yet to be chosen. Consider the sets
$$S_{n}:=\left\{x\in X:\left(\int_{J}\varphi\left(t_n\gamma(\bs s)\cdot x\right)f(\bs s)\,\dd \bs s\right)^h> t_n^{-\frac{kh}{2}+\frac{1}{1-A}+h\delta}\right\}$$
for $n\geq 1$. Then, following the proof of Lemma \ref{lem:BC1}, we obtain that
\begin{multline*}
\nu(S_n)<\frac{1}{t_n^{-\frac{kh}{2}+\frac{1}{1-A}+h\delta}}\int_X\left(\int_{J}\varphi\left(t_n\gamma(\bs s)\cdot x\right)f(\bs s)\,\dd \bs s\right)^h\,\dd\nu \\
\ll_{k,h,\gamma,\varphi,\delta} t_n^{-\frac{1}{1-A}-\frac{h\delta}{2}}=n^{-1-\frac{h\delta}{2}(1-A)}.
\end{multline*}
Hence the series $\sum_n\nu(S_n)$ converges.
By the Borel--Cantelli Lemma, we deduce that for $\nu$-a.e. $x\in X$, and for all large enough $n$ (depending on $x$)
\begin{equation*}
    \int_{J}\varphi\left(n^{(1-A)}\gamma(\bs s)\cdot x\right)f(\bs s)\,\dd \bs s \leq  n^{(1-A)\left(-\frac{k}{2}+\frac{1}{(1-A)h}+\delta\right)}. 
\end{equation*}
Then for any $t\in \left[n^{1-A},(n+1)^{1-A}\right)$ it holds that
\begin{align*}
\int_{J}\varphi\left(t\gamma(\bs s)\cdot x\right)f(\bs s)\,\dd \bs s & \ll_{A,J} \int_{J}\varphi\left((n+1)^{1-A}\gamma(\bs s)\cdot x\right)f(\bs s)\,\dd \bs s +\|\varphi\|_{Lip}Cn^{-A} \\
&\ll_{A,C} t^{-\frac{k}{2}+\frac{1}{(1-A)h}+\delta} + \|\varphi\|_{\operatorname{Lip}}t^{-\frac{A}{1-A}}, \\
\end{align*}
where we used the fact that both the function $\varphi$ and the action of $\mb R^d$ on $X$ are Lipschitz, and $C$ is the constant appearing in \eqref{eq:Lip}. We now choose the parameter $A$ so the exponents in the above two terms coincide. Specifically, we set 
$$A=\frac{k/2-\delta-\frac{1}{h}}{k/2-\delta+1}.$$
With this choice of $A$, we deduce that
$$\int_{J}\varphi\left(t\gamma(\bs s)\cdot x\right)f(\bs s)\,\dd \bs s \ll_{k,h,\gamma,\varphi,\delta,C} t^{(-\frac{kh}{2}+\delta h+1)\frac{1}{1+h}}.$$
By increasing $\delta$, we may assume that the implicit constant is $1$, thus \eqref{eq:Birk-S-eff} follows and the proof is completed.
\qed

\section{Correspondence with Dynamics 
and Proof of   Theorem \ref{thm:special}}
\label{sec:Diophcorr}

\subsection{The multiplicative Dani Correspondence}
In this section  we explain how to derive 
Theorem \ref{thm:special} from 
Theorem \ref{cor:diophantine},
by rephrasing the problem in a dynamical language. It is well-known that Diophantine properties of a given matrix $Y\in\mb R^{m\times n}$ can be  expressed
in terms of the long-term behavior of a certain orbit in the space of $(m+n)$-dimensional unimodular lattices (this dates back to the work of Dani \cite{Dani}). 
A correspondence with dynamics in the multiplicative set-up was more recently developed in \cite{FK24}. We briefly recall it here for the reader's convenience. We begin by presenting a lemma that underpins this whole construction.

\begin{lem}\rm (\cite[Lemma 4.1]{FK24}, a special case of \cite[Lemma 8.3]{KM99})
\label{lem:C1}  
For any $m,n\in\N$ 
and  any  {continuous}
 non-increasing function $\psi:{(}0,\infty)\to(0,1]$ there exists a unique 
{continuous}
 function 
 $R:[0,\infty)\mapsto \R$  
such that %
\eq{incr}{\text{the map }t\mapsto t-nR(t) \text{ is strictly increasing and tends to $\infty$ as } t\to\infty,}
\eq{nondecr}{\text{the map }t\mapsto t+mR(t)\text{ is non-decreasing,}}
and 
\eq{corr}{\psi\left(e^{t-nR(t)}\right)=e^{-t-mR(t)}\quad\forall\,t\geq 0.}
Conversely, given 
a 
 {continuous}
function $R:[0,\infty)\to \mb{R}$ with $R(0) \ge 0$, satisfying 
\equ{incr} 
and 
\equ{nondecr},  
there exists a unique 
 {continuous}
    non-increasing function $\psi:[e^{- nR(0)},\infty)\to(0,1]$
    such that \equ{corr} holds.
\end{lem}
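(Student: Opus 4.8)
The plan is to linearize the problem by the substitution $u=t-nR(t)$ and $v=t+mR(t)$, which inverts to $t=\tfrac{mu+nv}{m+n}$ and $R=\tfrac{v-u}{m+n}$, so that the defining relation \eqref{eq:corr} becomes simply $v=-\log\psi(e^{u})$. The content of the lemma is then that the graph of the pair $(u,v)$ can be reparametrised by $t\ge 0$ in exactly one way compatible with the monotonicity constraints.

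For the forward direction I would start from $\psi$ and define $v(u):=-\log\psi(e^{u})$ for $u\in\R$. Since $\psi$ is continuous, non-increasing, and takes values in $(0,1]$, the function $v$ is continuous, non-decreasing, and non-negative. Hence $t(u):=\tfrac{m}{m+n}u+\tfrac{n}{m+n}v(u)$ is continuous and strictly increasing (the first summand is strictly increasing, the second non-decreasing); moreover $t(u)\to+\infty$ as $u\to+\infty$, and $t(u)\to-\infty$ as $u\to-\infty$ because $v$ is non-decreasing and bounded below by $0$, hence bounded as $u\to-\infty$. Thus $t(\cdot)$ is a homeomorphism of $\R$ onto $\R$; let $u(\cdot)$ be its inverse and set $R(t):=\tfrac{v(u(t))-u(t)}{m+n}$ for $t\ge 0$. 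A direct computation using $t=t(u(t))=\tfrac{mu(t)+nv(u(t))}{m+n}$ gives $t-nR(t)=u(t)$ and $t+mR(t)=v(u(t))$, from which \eqref{eq:incr} (strict monotonicity of $u(\cdot)$ and $u(t)\to\infty$), \eqref{eq:nondecr} (monotonicity of $v\circ u$), and \eqref{eq:corr} follow at once; one also reads off $R(0)\ge 0$. For uniqueness, any admissible $R$ satisfies, by \eqref{eq:corr}, $t\big(t-nR(t)\big)=t$, so $t-nR(t)=u(t)$ is forced and therefore so is $R$.

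For the converse I would start from $R$ with $R(0)\ge 0$ satisfying \eqref{eq:incr}--\eqref{eq:nondecr}, set $u(t):=t-nR(t)$ — a homeomorphism of $[0,\infty)$ onto $[-nR(0),\infty)$ by \eqref{eq:incr} — and $v(t):=t+mR(t)$, non-decreasing by \eqref{eq:nondecr} with $v(t)\ge v(0)=mR(0)\ge 0$. Then define $\psi(x):=\exp\!\big(-v(u^{-1}(\log x))\big)$ for $x\ge e^{-nR(0)}=e^{u(0)}$. Composition of monotone maps shows $\psi$ is non-increasing, $v\ge 0$ forces values in $(0,1]$, continuity is clear, and \eqref{eq:corr} holds by construction because $t\mapsto e^{u(t)}$ sweeps out precisely $[e^{-nR(0)},\infty)$; uniqueness of $\psi$ follows since \eqref{eq:corr} pins it down on exactly that range.

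I expect no real obstacle here: the only delicate points are bookkeeping — verifying that $t(\cdot)$ in the forward direction is onto all of $\R$ (which hinges on the remark that $v(u)$ stays bounded as $u\to-\infty$) and matching the domain of $\psi$ with $[e^{-nR(0)},\infty)$ on both sides. Since the statement is quoted from \cite[Lemma 8.3]{KM99}, I would expect the authors either to cite it or to reproduce essentially this short argument.
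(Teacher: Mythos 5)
Your proof is correct, and in fact the paper does not give a proof of this lemma at all: it is quoted verbatim from \cite[Lemma 4.1]{FK24} (a special case of \cite[Lemma 8.3]{KM99}) and used as a black box. Your linearization via $u=t-nR(t)$, $v=t+mR(t)$ (equivalently $t=\tfrac{mu+nv}{m+n}$, $R=\tfrac{v-u}{m+n}$) reduces \equ{corr} to $v=-\log\psi(e^u)$ and is exactly the expected route; the two bookkeeping points you flag as delicate — that $u\mapsto t(u)=\tfrac{m}{m+n}u+\tfrac{n}{m+n}v(u)$ is a homeomorphism of $\R$ onto $\R$ (using $v\ge 0$ bounded near $-\infty$), and that in the converse $\{e^{u(t)}:t\ge0\}=[e^{-nR(0)},\infty)$ — are indeed the only ones requiring care, and you handle both correctly. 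One small addendum: your uniqueness argument in the forward direction shows $R$ is determined by \equ{corr} alone (together with the fact that $t(\cdot)$ is a bijection), which is slightly stronger than the uniqueness claimed in the lemma and hence fine.
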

\noindent To state the precise correspondence {between multiplicative approximation and dynamics on the space of lattices}, we first need to introduce some notation. Let $G=:\operatorname{SL}_{m+n}(\R)$ and 
$\Gamma:=\operatorname{SL}_{m+n}(\Z)$. Denote by $A$ the subgroup of diagonal matrices in $G$ and let $${\mf a:=\operatorname{Lie}(A) = \big\{ \operatorname{diag}(a_1,\dots,a_{m+n}): a_1+\dotsb+a_{m+n} = 0\big\}}$$ be the Lie algebra of $A$. Now, given a function $R:[{0},\infty)\to \R$ and 
${t \ge 0}$, define the set
\begin{multline*}
\label{eq:CRt}
C_{R}(t):=  
\big\{a\in\mf a : a_1+\dotsb+a_m={-(a_{m+1}+\dotsb+a_{m+n})}= t,\\
    a_{i}> -R(t)\mbox{ for }i=1,\dotsc,m, 
    \mbox{ and }{a_{m+j}<-R(t)}\mbox{ for }j=1,\dotsc,n
\big\},
\end{multline*}
and put
$$C_R:=\bigcup_{t> 0}C_R(t).$$
{Note that when $R\equiv 0$, the set $C_R = C_0$ coincides with $E$ as defined in \eqref{eq:E}.}

Recall that the homogeneous space $X:=G/\Gamma$ can be 
identified with the space of unimodular lattices in $\R^{m+n}$
{by letting} 
$g\Gamma \in X$ correspond to the unimodular lattice $g\Z^{m+n}$. Given a lattice ${x}\in X$, we denote by $\delta({x})$ the length of the shortest non-zero vector of ${x}$ with respect to the supremum norm, i.e.,
\begin{equation*}
    \delta({x}):=\min\big\{\|\bs v\|_{\infty}: \bs v\in {x}\setminus \{\bs 0\} \big\}.
\end{equation*} 
For $Y\in \R^{m\times n}$  we put
    \begin{equation*}
    {x}_{Y}:=\begin{pmatrix}
        I_{m}& Y\\
        \bs 0 & I_{n}
    \end{pmatrix}
    \Z^{m+n}.
\end{equation*}
{Note that $\{{x}_{Y}: Y\in \R^{m\times n}\}$ is a   compact orbit of $U_{m,n}$ as in \eqref{eq:umn}. Finally, as in Theorem \ref{thm:gmodgamma}, we consider the action $(a,x)\mapsto a.x$ of $\mathfrak a$ on $X$ given by \eqref{eq:action}.}

The following result was proved in \cite{FK24} and represents the core of the correspondence in the multiplicative set-up.

\begin{prop}
\label{prop:C2} \cite[Proposition  4.4]{FK24}
Let $\psi:{(0},\infty)\to(0,1]$ be a  {continuous}
non-increasing function, and let $R$ be the function corresponding to $\psi$ via Lemma  \ref{lem:C1}.
Take $Y\in \mb{R}^{m\times n}$ and $T\geq 1$. Then 
$Y\in\mc{S}_{m,n}^{\times}(
\psi,T)$ if and only if 
{$\exists\,a\in C_{R}(t)$, with $t\ge 0$ defined by  $T=e^{t-nR(t)}$,} such that
$\delta ({a.x}_{Y}) < e^{-R(t)}$.
\end{prop}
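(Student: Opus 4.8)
The plan is to unwind the definitions on both sides and show they match term by term. Recall that $\mc{S}_{m,n}^{\times}(\psi,T)$ asks for the existence of $\bs p\in\Z^m$ and $\bs q\in\Z^n\setminus\{\bs 0\}$ with $\prod_{i=1}^m|Y_i\bs q - p_i| < \psi(T)$ and $\Pi_+(\bs q) < T$. The lattice $a.x_Y$ consists of vectors of the form
\[
\begin{pmatrix} e^{a_1}(Y_1\bs q - p_1) \\ \vdots \\ e^{a_m}(Y_m\bs q - p_m) \\ e^{a_{m+1}}q_1 \\ \vdots \\ e^{a_{m+n}}q_n \end{pmatrix}, \qquad (\bs p,\bs q)\in\Z^m\times\Z^n,
\]
so $\delta(a.x_Y) < e^{-R(t)}$ exactly means there is a \emph{nonzero} such vector with every coordinate bounded in absolute value by $e^{-R(t)}$, i.e.\ $|Y_i\bs q - p_i| < e^{-a_i - R(t)}$ for $i\le m$ and $|q_j| < e^{-a_{m+j} - R(t)}$ for $j\le n$.

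First I would fix $t\ge 0$ and $a\in C_R(t)$, and derive the consequences of the defining inequalities $a_i > -R(t)$ and $a_{m+j} < -R(t)$ together with $a_1+\dots+a_m = -(a_{m+1}+\dots+a_{m+n}) = t$. For the $\bs q$-side: multiplying the $n$ bounds $|q_j| < e^{-a_{m+j}-R(t)}$ over $j$ gives $\prod_j |q_j| < e^{-(a_{m+1}+\dots+a_{m+n}) - nR(t)} = e^{t - nR(t)} = T$; and since each $-a_{m+j} - R(t) > 0$, each individual bound forces $|q_j| < $ something $>1$, and a short argument replacing $|q_j|$ with $\max\{1,|q_j|\}$ upgrades this to $\Pi_+(\bs q) < e^{t-nR(t)} = T$ (one uses that if $|q_j|\le 1$ then $\max\{1,|q_j|\} = 1 \le e^{-a_{m+j}-R(t)}$ anyway). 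For the $\bs p$-side: multiplying the $m$ bounds gives $\prod_i |Y_i\bs q - p_i| < e^{-(a_1+\dots+a_m) - mR(t)} = e^{-t - mR(t)} = \psi(e^{t-nR(t)}) = \psi(T)$ by \equ{corr}. This shows $\delta(a.x_Y) < e^{-R(t)} \Rightarrow Y\in\mc{S}_{m,n}^{\times}(\psi,T)$, provided the witnessing vector is nonzero — but nonzero means $(\bs p,\bs q)\neq(\bs 0,\bs 0)$, and $\bs q = \bs 0$ would force $\bs p = \bs 0$ (as $|p_i| = |Y_i\bs 0 - p_i| < e^{-a_i-R(t)}$ need not exclude $p_i$; here one notes $\bs q=\bs 0$ makes the first block $e^{a_i}(-p_i)$, which vanishes only if $\bs p=\bs 0$, contradiction), so in fact $\bs q\neq\bs 0$, as required.

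Conversely, given $(\bs p,\bs q)$ with $\bs q\neq\bs 0$ witnessing $Y\in\mc{S}_{m,n}^{\times}(\psi,T)$, I would construct $a\in C_R(t)$ realizing $\delta(a.x_Y) < e^{-R(t)}$. The idea is to choose the $a_i$ so as to balance the coordinates of the vector against their respective thresholds. Concretely, with $t$ determined by $T = e^{t-nR(t)}$ (this is possible and unique since $t\mapsto t - nR(t)$ is strictly increasing to $\infty$ by \equ{incr}, and one checks $T\ge 1$ lands in the range), set $a_{m+j} := -R(t) - \log\big(\max\{1,|q_j|\}\big) - \epsilon_j$ for small $\epsilon_j\ge 0$ tuned so that $\sum_j a_{m+j} = -t$ (using $\Pi_+(\bs q) < T = e^{t-nR(t)}$ to guarantee room), which gives $a_{m+j} < -R(t)$ and $|q_j| < e^{-a_{m+j}-R(t)}$; then allocate the first block similarly, setting $a_i := -R(t) + \delta_i$ with $\delta_i\ge 0$ tuned so $\sum_i a_i = t$ and $|Y_i\bs q - p_i| < e^{-a_i - R(t)}$, which is feasible because $\prod_i |Y_i\bs q - p_i| < \psi(T) = e^{-t-mR(t)}$ leaves total slack $t + mR(t) - \log\prod_i|Y_i\bs q-p_i|>0$ to distribute while keeping each $\delta_i \ge 0$ so that $a_i > -R(t)$; one must also handle the degenerate case $Y_i\bs q - p_i = 0$ by taking the corresponding $a_i$ very large (this is fine since that coordinate is zero regardless). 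The main obstacle is precisely this balancing act: choosing the exponents to simultaneously satisfy the strict sign constraints $a_i > -R(t)$, $a_{m+j} < -R(t)$, the linear constraints $\sum_i a_i = t = -\sum_j a_{m+j}$, and the per-coordinate bounds — this is a straightforward but fiddly slack-distribution argument, and care is needed with the non-strict vs.\ strict inequalities and with $\Pi_+$ versus the bare product. Since the proposition is quoted from \cite{FK24}, I would in fact cite it directly and only sketch the above as a reminder.
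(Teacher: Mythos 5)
The paper does not prove this proposition but simply cites \cite{FK24}, and you also say you would cite it, so the overall approach coincides; the sketch you supply is mostly on the right track but contains one genuine error and two underdeveloped points.

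The error is in the forward direction, in your argument that $\bs q\neq\bs 0$. You write that the bound $|p_i|=|Y_i\bs 0 - p_i| < e^{-a_i-R(t)}$ ``need not exclude $p_i$,'' and then observe that the first block $e^{a_i}(-p_i)$ ``vanishes only if $\bs p=\bs 0$.'' That observation is tautological and actually works \emph{against} the desired conclusion: a nonzero $\bs p$ is precisely what would keep the witnessing vector nonzero, so nothing is forced. The correct argument is the opposite of what you claim: since $a\in C_R(t)$ gives $a_i>-R(t)$ for every $i\le m$, one has $e^{-a_i-R(t)}<1$, so the coordinate bound $|p_i|<e^{-a_i-R(t)}$ \emph{does} exclude $p_i\neq 0$. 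Hence $\bs q=\bs 0$ would force $\bs p=\bs 0$, contradicting nontriviality of the lattice vector, and therefore $\bs q\neq\bs 0$.

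In the converse direction your slack-distribution scheme is the right idea, but the two points you dismiss as ``fiddly'' are where the real work lies and should be made explicit. First, for the intervals $\bigl(-R(t),\ -R(t)-\log|Y_i\bs q-p_i|\bigr)$ to be nonempty you need $|Y_i\bs q - p_i|<1$ for each $i$; this is not guaranteed by the given witness $\bs p$, so you must first replace $\bs p$ by the vector minimizing each $|Y_i\bs q-p_i|$, noting that this only decreases the product and so preserves membership in $\mc S^\times_{m,n}(\psi,T)$. Second, all inequalities in the definition of $C_R(t)$ are strict, so you need strictly positive total slack to distribute, namely $\sum_i\bigl(-R(t)-\log|Y_i\bs q-p_i|\bigr)-t = -\log\prod_i|Y_i\bs q-p_i| - t - mR(t)>0$ and $\log T-\log\Pi_+(\bs q)>0$; both follow from the strict inequalities in the definition of $\mc S^\times_{m,n}(\psi,T)$ together with $\psi(T)=e^{-t-mR(t)}$. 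Spelling these out would turn your sketch into a complete argument.
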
 

From Proposition \ref{prop:C2}  one can {immediately} deduce a 
characterization of {sets $\operatorname{W}_{m,n}^{\times}(\psi)$ and $\UA _{m,n}^{\times}(\psi)$ (see \equ{psima} and \equ{psidir})} in terms of the long-term behavior of the orbits ${\{a.x_{Y}: a\in C_R\}}$ in 
$X$. 

\begin{cor}
\label{prop:corr}
Let $\psi:{(0},\infty)\to(0,1]$ be a  {continuous}
non-increasing function, and let $R$ be the function corresponding to $\psi$ via Lemma \ref{lem:C1}. {Then:
\begin{itemize}
    \item[{\rm (a)}] $Y\in \UA ^{\times}_{m,n}(\psi)$ if and only if there exists $t_0> 0$ such that whenever $t\geq t_0$ 
\begin{equation}
\label{eq:hyper}
\exists\, a\in C_R(t):\delta ({a.x}_{Y}) < e^{-R(t)};
\end{equation}
   \item[{\rm (b)}] (see \cite[Proposition {4.5}]{FK24}) $Y\in \operatorname{W} ^{\times}_{m,n}(\psi)$ 
   if and only if there exists an unbounded set of $t\ge 0$ such that \eqref{eq:hyper} holds.
\end{itemize}}
\end{cor}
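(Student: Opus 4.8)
The plan is to deduce both statements directly from Proposition \ref{prop:C2}, which already identifies membership in $\mc S_{m,n}^\times(\psi,T)$ with the dynamical condition \eqref{eq:hyper}, once one checks that the change of variables $T = e^{t-nR(t)}$ sets up a bijection between the relevant ranges of $T$ and $t$. First I would recall from \eqref{incr} that the map $t\mapsto t-nR(t)$ is strictly increasing on $[0,\infty)$ and tends to $\infty$; hence $t\mapsto T(t):=e^{t-nR(t)}$ is a strictly increasing continuous bijection from $[0,\infty)$ onto $[e^{-nR(0)},\infty)$. In particular ``$T\ge T_0$'' translates into ``$t\ge t_0$'' with $t_0$ the preimage of $T_0$ (and similarly for strict inequalities), and ``an unbounded set of $T$'' translates into ``an unbounded set of $t$''. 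This is the only genuinely new bookkeeping needed; everything else is formal manipulation of the $\liminf$/$\limsup$ definitions.

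For part (a): by definition $Y\in\UA^\times_{m,n}(\psi)=\bigcup_{T_0>1}\bigcap_{T\ge T_0}\mc S_{m,n}^\times(\psi,T)$, i.e.\ there is $T_0$ with $Y\in\mc S_{m,n}^\times(\psi,T)$ for all $T\ge T_0$. Applying Proposition \ref{prop:C2} to each such $T$ and using the bijection above, this is equivalent to the existence of $t_0>0$ such that for all $t\ge t_0$ there exists $a\in C_R(t)$ with $\delta(a.x_Y)<e^{-R(t)}$, which is exactly \eqref{eq:hyper} holding for all $t\ge t_0$. One should note that the values of $T$ below $e^{-nR(0)}$ (equivalently $t<0$, which does not occur) are irrelevant since we only care about large $T$, so no boundary issues arise; if one is worried about $T_0\le e^{-nR(0)}$ one simply enlarges $T_0$. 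Part (b) is the cited \cite[Proposition 4.5]{FK24} and follows by the identical argument with $\bigcup\bigcap$ replaced by $\bigcap\bigcup$: $Y\in\operatorname{W}^\times_{m,n}(\psi)=\limsup_{T\to\infty}\mc S_{m,n}^\times(\psi,T)$ means $Y\in\mc S_{m,n}^\times(\psi,T)$ for an unbounded set of $T$, which under the bijection becomes: \eqref{eq:hyper} holds for an unbounded set of $t$.

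I do not expect any serious obstacle here; the one point requiring a little care is making sure the monotone change of variables $T\leftrightarrow t$ is honestly a bijection on the relevant unbounded tails (so that ``for all large $T$'' and ``for all large $t$'' — and ``for unboundedly many'' — correspond), which is precisely what properties \eqref{incr} of the function $R$ from Lemma \ref{lem:C1} guarantee. The continuity of $\psi$ and of $R$ assumed in the statement is exactly what lets us invoke Proposition \ref{prop:C2} and Lemma \ref{lem:C1} without extra hypotheses.
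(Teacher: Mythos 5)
Your proposal is correct, and it takes the same route the paper intends: the paper explicitly says Corollary \ref{prop:corr} follows ``immediately'' from Proposition \ref{prop:C2} and gives no further proof. You have correctly identified (and verified) the only point that needs to be said — that $t\mapsto e^{t-nR(t)}$ is a strictly increasing continuous bijection onto a half-line $[e^{-nR(0)},\infty)\supseteq[1,\infty)$, so ``for all sufficiently large $T$'' and ``for an unbounded set of $T$'' transfer verbatim to the corresponding statements in $t$.
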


\subsection{A dynamical description of $\operatorname{Sing}_{m,n}^{\times}$ and $\operatorname{WA} _{m,n}^{\times}$} As an example, fix $0 < c \leq 1$ and let ${\psi=\min\{1,c\psi_1\}}$; then the function $R:[0,\infty)\mapsto \R$ corresponding to $\psi$ through Lemma \ref{lem:C1} is constant, namely
\begin{equation*}
\label{eq:R}
R(t) = R_c\text{ for all }t\ge 0, \text{ where }R_c  := \frac{1}{m+n}\log (1/c).    
\end{equation*}
Now, consider the set 
$C_{R_c} = \bigcup_{t>0}C_{R_c}(t),$ where 
\begin{multline*}
\label{eq:Cct}
C_{R_c}(t) =\left\{a\in \mf a:a_1+\dots+a_m={-(a_{m+1}+\dotsb+a_{m+n})}=t, \right.\\ \left.a_i> \frac{\log c}{m+n}\mbox{ for }i=1,\dotsc,m,
\mbox{ and }a_{m+j}<\frac{\log c}{m+n}\mbox{ for }j=1,\dotsc,n\right\}.
\end{multline*}
In view of Corollary \ref{prop:corr}, this cone appears in the dynamical description of the sets $\UA ^{\times}_{m,n}(c\psi_1)$ and  $\operatorname{W} ^{\times}_{m,n}(c\psi_1)$. Clearly $R_1 = 0$, so that $C_{R_1}$ coincides with the cone $C_0 = E$ as in \eqref{eq:E}. Furthermore, for $c<1$ the  sets $C_{R_c}$ are at a bounded Hausdorff distance from $E$. However, since $c$ is changing in the definitions of the sets $\textup{WA}_{m,n}^{\times}$ and $\textup{Sing}_{m,n}^{\times}$, one may not use the cones $C_{R_c}$ to dynamically describe these sets.

Our next observation is that, by a slight perturbation of the constant $c$, one can characterize both the sets $\UA ^{\times}_{m,n}(c\psi_1)$ and  $\operatorname{W} ^{\times}_{m,n}(c\psi_1)$ solely via the action of the cone $E$ on $X$. 
This is formalized in the following "Transference Lemma", which is a generalization of the argument from 
the proof of \cite[Proposition 11.1]{EKL06} to the matrix set-up.

\begin{lem}[Transference Lemma]
\label{lem:transference}
Let $Y\in\mb R^{m\times n}$, let $0<c<1$, and let $t>0$.
\begin{itemize}
    \item[{\rm (a)}] If $\delta(a.x_{Y})<c^{\frac{1}{m+n}}$ for some $a\in C_{R_c}(t)$, then  $$\exists\text{ $a'\in C_0(t)$}:\delta( a'.x_Y)<c^{\frac{1}{m(m+n)}}.$$  
   \item[{\rm (b)}] If $\delta(a.x_{Y})<{c}^{\frac{m+n-1}{m(m+n)}}$ for some  $a\in C_0(t)$, then
   $$\exists\text{ $a'\in C_{R_{c}}\left(t-\tfrac{n-1}{m+n} \log {c}\right)$}:\delta(a'.x_Y)<{c}^{\frac{1}{m+n}}.$$   
\end{itemize}
\end{lem}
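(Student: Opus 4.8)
The plan is to exploit the explicit shape of the cones $C_{R_c}(t)$ and $C_0(t)$ together with the fact that scaling a lattice vector by the diagonal flow multiplies its coordinates by prescribed exponential factors. Write $a = \operatorname{diag}(a_1,\dots,a_{m+n})$ with $a_i > -R_c$ for $i \le m$ and $a_{m+j} < -R_c$ for $j \le n$, where $R_c = \frac{1}{m+n}\log(1/c) > 0$. The key point is that $C_{R_c}(t)$ and $C_0(t)$ live in the same hyperplane $\{a_1 + \dots + a_m = -(a_{m+1}+\dots+a_{m+n}) = t\}$, but with shifted inequality constraints; moving from one to the other amounts to adding a bounded "correction vector" $w \in \mf a$ to $a$, and one must control how $\exp(w)$ distorts the shortest vector of $\exp(a).x_Y$. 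Since $\|\bs v\|_\infty$ under $\exp(w)$ changes by at most $e^{\max_i|w_i|}$ in either direction, it suffices to (i) produce the correction vector $w$ landing us in the target cone, keeping $\sum$ over the first $m$ coordinates (and minus the sum over the last $n$) equal to the prescribed value, and (ii) bound $\max_i |w_i|$ in terms of $R_c$, i.e. in terms of $\log(1/c)$.

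\textbf{Part (a).} Here we start with $a \in C_{R_c}(t)$, so $a_i > -R_c$ for all $i \le m$ and $a_{m+j} < -R_c$ for all $j \le n$, and we want $a' \in C_0(t)$, i.e.\ $a'_i > 0$ and $a'_{m+j} < 0$. First I would add $R_c$ to each of the first $m$ coordinates and subtract $R_c$ from each of the last $n$ coordinates; call the result $\tilde a$. Then $\tilde a_i > 0$ and $\tilde a_{m+j} < 0$, but the sum over the first block is now $t + mR_c$ rather than $t$, so $\tilde a$ sits in $C_0(t + mR_c)$, not $C_0(t)$. To land back at parameter $t$ I would then subtract a small positive amount, distributed evenly, from the first $m$ coordinates and add it back to the last $n$ coordinates; as long as $t$ is large enough relative to $R_c$ the first-block coordinates stay positive — but one must handle the general case, and the cleanest fix is instead to note that one is free to choose \emph{which} point of the cone to use. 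Concretely: the displacement needed has sup-norm $O(R_c)$, hence $\delta(a'.x_Y) \le e^{O(R_c)}\delta(a.x_Y) < e^{O(R_c)} c^{1/(m+n)}$, and one checks the exponent bookkeeping so that $e^{O(R_c)}c^{1/(m+n)} \le c^{1/(m(m+n))}$ with the constant the lemma requires. The arithmetic of matching exponents is routine; the main care is ensuring the corrected point still has all first-block (resp.\ second-block) coordinates strictly positive (resp.\ negative) so that it genuinely lies in $C_0(t)$.

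\textbf{Part (b).} This is the reverse direction and is the part I expect to be the main obstacle, because passing from $C_0$ into $C_{R_c}$ \emph{tightens} the constraints (we now need $a'_i > -R_c$ with $R_c < 0$ impossible — rather $a'_i > -R_c$ is weaker, but we need to land at a shifted parameter $t - \frac{n-1}{m+n}\log c$, and $\log c < 0$). Starting from $a \in C_0(t)$ with $\delta(a.x_Y) < c^{(m+n-1)/(m(m+n))}$, I would apply a diagonal correction $\exp(w)$ pushing the shortest vector's coordinates down; the choice of $w$ must simultaneously (1) keep $\|\exp(w)\exp(a)\bs v\|_\infty$ below $c^{1/(m+n)}$ for the shortest vector $\bs v$, (2) place $a + w$ in $C_{R_c}$ at the claimed parameter. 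Following the structure of the proof of \cite[Proposition 11.1]{EKL06}, the idea is that if the shortest vector is already very short then one has room to flow further without making any other vector short, by an application of Minkowski's theorem / a compactness argument on the space of lattices, or more directly by noting that the relevant coordinates of $\bs v$ are forced to be small so multiplying them by a bounded factor keeps them controlled. The shift $\frac{n-1}{m+n}\log c$ in the parameter and the precise exponent $\frac{m+n-1}{m(m+n)}$ on the hypothesis are exactly what make the bookkeeping close, so the delicate step is tracking these constants through the coordinate-wise estimates; I would organize this by writing $\bs v = (v_1,\dots,v_{m+n})$, using $\max_{i\le m}|(\exp a\,\bs v)_i| < c^{(m+n-1)/(m(m+n))}$ to bound the first $m$ coordinates of $\bs v$, then choosing $w$ to equalize and optimize, and finally verifying that the last $n$ coordinates of $\exp(a+w)\bs v$ remain below the threshold using that $\bs v$ is a lattice vector in a unimodular lattice (so its coordinates cannot all be tiny). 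The whole of part (b) is really one constrained optimization over diagonal shifts, and the constant $\frac{1}{m}$ appearing in the exponents reflects that the first block has $m$ coordinates to redistribute over.
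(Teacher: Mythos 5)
Your proposal for part (b) is roughly in the right spirit -- the paper's proof of (b) is indeed a pure diagonal-shift argument with carefully chosen shifts: the first $m$ coordinates are all shifted by $-\tfrac{n-1}{m(m+n)}\log c$, the ``large'' negative coordinates are clamped to $\tfrac{1}{m+n}\log c$, and the remaining ones are decreased as needed to restore the sum. You have not pinned down these shifts, but your outline (constrained optimization over a diagonal correction) is consistent with the actual argument.

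Your approach to part (a), however, has a genuine gap: the naive displacement arithmetic does not close. If the displacement $w = a'-a$ has sup-norm on the order of $R_c = \tfrac{1}{m+n}\log(1/c)$, then the shortest-vector bound degrades by a factor $e^{\|w\|_\infty}\approx e^{R_c}=c^{-1/(m+n)}$. Applied to the hypothesis $\delta(a.x_Y)<c^{1/(m+n)}$, this gives only $\delta(a'.x_Y)<1$, which is useless; to obtain $c^{1/(m(m+n))}$ you would need $\|w\|_\infty\lesssim\tfrac{m-1}{m}R_c$, but pushing a single coordinate from near $-R_c$ to above $0$ already costs $R_c$. This is not a matter of ``routine bookkeeping'' -- the displacement-only approach is simply insufficient, and the dependence on $t$ you flag (``as long as $t$ is large enough'') cannot be eliminated by it, since the lemma must hold for all $t>0$. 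The missing idea in the paper is to abandon the original lattice vector in the offending coordinates and manufacture a new one: for the indices $i\le m$ where $a_i\le 0$, set $a_i'=0$ and invoke Dirichlet's theorem in $\R^k$ to find an integer $1\le\ell\le\varepsilon^{-1}$ and integers $p_i'$ with $|Y_i\ell\bs q+p_i'|<\varepsilon^{1/k}$; in the remaining first-block coordinates one decreases $a_i$ (which only helps), and the scaling of $\bs q$ by $\ell$ costs only a factor $\varepsilon^{-1}$. Optimizing $\varepsilon=c^{k/((k+1)(m+n))}$ then yields $\delta(a'.x_Y)<c^{1/((k+1)(m+n))}\le c^{1/(m(m+n))}$. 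Without this switch to a new lattice vector, part (a) cannot be salvaged by a purely diagonal correction.
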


\begin{proof}
Let us start by proving (a). Fix $a\in C_{R_c}(t)$ and $\bs p\in\mb Z^{m},\,\bs q\in\mb Z^{n}$ (not both $\bs 0$) such that
\begin{equation}
\label{eq:condp}
e^{a_i}|Y_i\bs q+p_i|<c^{\frac{1}{m+n}}\quad\mbox{for }i=1,\dotsc,m    
\end{equation}
and
\begin{equation}
\label{eq:condq}
e^{a_{m+j}}|q_j|<c^{\frac{1}{m+n}}\quad\mbox{for }j=1,\dotsc,n.
\end{equation}
{Since $a_1+\dots+a_m=t>0$, at least one of the $a_i$'s with $i=1,\dotsc,m$ should be positive.} Assume now, without loss of generality, that $a_1,\dotsc,a_k\leq 0$ and that $a_{k+1},\dotsc,a_m>0$ for some $1\leq k\leq m-1$. Let $\varepsilon>0$ to be determined later. By using Dirichlet's Theorem {in $\R^k$}, we may pick an integer $1\leq \ell\leq \varepsilon^{-1}$ such that
$$|Y_i\ell\bs q+p_i'|<\varepsilon^{1/k}\quad\mbox{for }i=1,\dotsc,k,$$
for some $p_i'\in\mb Z$. Then, by \eqref{eq:condp} and \eqref{eq:condq}, we deduce that
\begin{equation}
\label{eq:condDir}
\begin{cases}
|Y_i\ell\bs q+p_i'|<\varepsilon^{1/k}\quad\mbox{for }i=1,\dotsc,k \\[2mm]
e^{a_i}|Y_i\ell\bs q+\ell p_i|<\varepsilon^{-1}c^{\frac{1}{m+n}}\quad\mbox{for }i=k+1,\dotsc,m \\[2mm]
e^{a_{m+j}}|\ell q_j|<\varepsilon^{-1}c^{\frac{1}{m+n}}\quad\mbox{for }j=1,\dotsc,n.
\end{cases}
\end{equation}
{Since $a_1+\dots+a_m=t$ and $a_1,\dots,a_k\leq 0$, we have $a_{k+1}+\dots+a_m\geq t$.} Now choose $0\leq a_{i}'\leq a_{i}$ for $i=k+1,\dotsc,m$ so that
$$a_{k+1}'+\dotsb +a_{m}'=t,$$
and define
$$a':=(0,\dotsc,0,a_{k+1}',\dotsc,a_{m}',a_{m+1},\dotsc a_{m+n})\in \overline{C_0(t)}.$$
{Observe that $\bs q\neq 0$, since otherwise \eqref{eq:condp} implies that there exists $1\leq i\leq m$ such that $a_i<\frac{\log c}{m+n}$, which cannot happen since $a\in C_{R_c}(t)$.}
Thus \eqref{eq:condDir} implies that
\begin{equation}
\label{eq:opencond}
\delta(a'.x_Y)<\max\left\{\varepsilon^{1/k},\varepsilon^{-1}c^{\frac{1}{m+n}}\right\}. \end{equation}
Optimizing, we find that the best possible value for $\varepsilon$ is
$\varepsilon=c^{\frac{k}{(k+1)(m+n)}}$. 
For this $\varepsilon$, the estimate \eqref{eq:opencond}, gives
\begin{equation}
\label{eq:opencond1}
\delta(a'.x_Y)< c^{\frac{1}{(k+1)(m+n)}}\leq c^{\frac{1}{m(m+n)}}.
\end{equation}
To conclude the proof of (a), we observe that, since \eqref{eq:opencond1} is an open condition, by perturbing $a'$ slightly we may assume that it does not lie on the boundary of 
$C_0(t)$.

Let us now move on to the proof of (b). Fix $a\in C_0(t)$ and $\bs p\in\mb Z^{m},\,\bs q\in\mb Z^{n}$ (not both $\bs 0$) such that 
\begin{equation}
\label{eq:condp2}
e^{a_i}|Y_i\bs q+p_i|<{c}^{\frac{m+n-1}{m(m+n)}}\quad\mbox{for }i=1,\dotsc,m    
\end{equation}
and
\begin{equation}
\label{eq:condq2}
e^{a_{m+j}}|q_j|<{c}^{\frac{m+n-1}{m(m+n)}}\quad\mbox{for }j=1,\dotsc,n.
\end{equation}
As before, since $a_i > 0$ for all $i=1,\dots,m$, \eqref{eq:condp2} implies that $\bs q\ne\bs 0$, hence \eq{atleast}{e^{a_{m+j}}<{c}^{\frac{m+n-1}{m(m+n)}}\text{ for at least one }j = 1,\dots,n.}
Now, assume without loss of generality that  for some $1\leq k\leq n-1$ we have that 
$$e^{a_{m+j}}\geq c^{\frac{1}{m+n}}\text{ for $j=1,\dotsc,k$, \quad while \quad $e^{a_{m+j}}<c^{\frac{1}{m+n}}$ for }j=k+1,\dotsc,n.$$ Note that  we cannot have $k=n$ in view of \equ{atleast}. Let us set
$$a_{i}':=a_i-\frac{n-1}{m(m+n)}\log c\ \mbox{ for }i=1,\dotsc,m, \text{ and $a_{m+j}':=\frac{1}{m+n}\log c$\  for }j=1,\dotsc,k.$$
Finally, let us choose $a_{m+j}'\leq a_{m+j}$ for $j=k+1,\dotsc,n$ in such a way that
$$-a_{m+1}'-\dotsb -a_{m+n}'=a_1'+\dotsb +a_m'=t-\frac{n-1}{m+n}\log c.$$
Then  $a'\in \overline{C_{R_{c}}\left(t-\frac{n-1}{m+n}\log c\right)}$. Moreover, \eqref{eq:condp2} and \eqref{eq:condq2} imply that
$$
\begin{aligned}e^{a_i'}|Y_i\bs q+p_i| &= c^{-\frac{n-1}{m(m+n)}}e^{a_i}|Y_i\bs q+p_i|\\ &<c^{-\frac{n-1}{m(m+n)}}{c}^{\frac{m+n-1}{m(m+n)}}= {c^{\frac{1}{m+n}}}\quad\mbox{for }i=1,\dotsc,m
\end{aligned}$$
and 
$$e^{a_{m+j}'}|q_j|\le e^{a_{m+j}}|q_j|<{c^{\frac{1}{m+n}}}\quad\mbox{for }j=1,\dotsc,n,$$
whence
\begin{equation*}
\delta(a'.x_Y)< c^{\frac{1}{(m+n)}}.
\end{equation*}
Once again, since 
both inequalities above are open conditions, we may perturb $a'$ slightly so that it  {lies 
inside 
${C_{R_{c}}\left(t-\frac{n-1}{m+n}\log{c}\right)}$.}
\end{proof}
Now, we can describe multiplicatively singular and multiplicatively well-approximable matrices using only the action of the cone $E$ on $X$. Let us define the following section of $E$: 
\eq{E1}{E_1 := C_0(1) = \{a\in E: a_1 + \cdots+a_m = 1\}.}
\begin{cor}
\label{cor:sing}
Let $Y\in\mb R^{m\times n}$. Then: 
\begin{itemize}
    \item[{\rm (a)}] $Y\in\operatorname{Sing}_{m,n}^{\times}$ 
    $\Longleftrightarrow$ for any $\varepsilon>0$ there exists $t_0>0$ such that,  whenever $t\geq t_0$,
    \begin{equation}
\label{eq:lessthanepsilon}
\begin{aligned}
\exists\, a\in C_0(t):\delta ({a.x}_{Y}) < \varepsilon,\\
\text{or, equivalently,}\ &(tE_1).x_{Y}\cap\{x\in X: \delta(x)<\varepsilon\}\neq\emptyset.
\end{aligned}
\end{equation}
   \item[{\rm (b)}]  $Y\in \operatorname{WA} ^{\times}_{m,n}$ $\Longleftrightarrow$ 
   for any $\varepsilon>0$ there exists an unbounded set of $t\ge 0$ such that \eqref{eq:lessthanepsilon} holds, or, equivalently, the orbit $E.x_Y$ is unbounded in $X$.
\end{itemize}
\end{cor}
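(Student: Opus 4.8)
The plan is to derive Corollary \ref{cor:sing} directly from Corollary \ref{prop:corr} and the Transference Lemma (Lemma \ref{lem:transference}), by specializing to $\psi = \min\{1,c\psi_1\}$ and then letting $c\to 0$. Recall that $\operatorname{Sing}_{m,n}^\times = \bigcap_{c>0}\UA_{m,n}^\times(c\psi_1)$ and $\operatorname{WA}_{m,n}^\times = \bigcap_{c>0}\operatorname{W}_{m,n}^\times(c\psi_1)$, and that for $0<c\le 1$ the function corresponding to $\min\{1,c\psi_1\}$ via Lemma \ref{lem:C1} is the constant $R\equiv R_c = \tfrac{1}{m+n}\log(1/c)$, so that Corollary \ref{prop:corr}(a) reads: $Y\in\UA_{m,n}^\times(c\psi_1)$ iff there is $t_0>0$ with $\exists\,a\in C_{R_c}(t):\delta(a.x_Y)<e^{-R_c} = c^{1/(m+n)}$ for all $t\ge t_0$ (and similarly (b) with an unbounded set of $t$).

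\medskip

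\noindent\textbf{Proof of (a).} For the forward implication, suppose $Y\in\operatorname{Sing}_{m,n}^\times$ and fix $\varepsilon>0$. Choose $c = c(\varepsilon)\in(0,1]$ small enough that $c^{1/(m(m+n))}\le\varepsilon$. Since $Y\in\UA_{m,n}^\times(c\psi_1)$, Corollary \ref{prop:corr}(a) gives $t_0>0$ such that for every $t\ge t_0$ there is $a\in C_{R_c}(t)$ with $\delta(a.x_Y)<c^{1/(m+n)}$. Applying Lemma \ref{lem:transference}(a), for each such $t$ we obtain $a'\in C_0(t)$ with $\delta(a'.x_Y)<c^{1/(m(m+n))}\le\varepsilon$, which is exactly \eqref{eq:lessthanepsilon}. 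The equivalent reformulation in terms of $(tE_1).x_Y$ is just the scaling identity $C_0(t) = tC_0(1) = tE_1$ together with the definition $\delta(x)<\varepsilon$. For the converse, suppose the displayed condition holds and fix $c\in(0,1]$; we must show $Y\in\UA_{m,n}^\times(c\psi_1)$. Apply the hypothesis with $\varepsilon := c^{(m+n-1)/(m(m+n))}$ to get $t_0$ such that for all $t\ge t_0$ there is $a\in C_0(t)$ with $\delta(a.x_Y)<c^{(m+n-1)/(m(m+n))}$; then Lemma \ref{lem:transference}(b) produces $a'\in C_{R_c}\!\big(t-\tfrac{n-1}{m+n}\log c\big)$ with $\delta(a'.x_Y)<c^{1/(m+n)} = e^{-R_c}$. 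Since $t\mapsto t-\tfrac{n-1}{m+n}\log c$ is a bijection of $[t_0,\infty)$ onto a half-line $[t_1,\infty)$, this says precisely that \eqref{eq:hyper} holds for all large $t$, so Corollary \ref{prop:corr}(a) gives $Y\in\UA_{m,n}^\times(c\psi_1)$. As $c\in(0,1]$ was arbitrary and $\UA_{m,n}^\times(c\psi_1)$ only shrinks as $c$ decreases, $Y\in\bigcap_{c>0}\UA_{m,n}^\times(c\psi_1) = \operatorname{Sing}_{m,n}^\times$.

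\medskip

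\noindent\textbf{Proof of (b).} The argument is verbatim the same as for (a), replacing ``there exists $t_0$ such that for all $t\ge t_0$'' by ``there exists an unbounded set of $t\ge 0$ such that'' throughout, and using Corollary \ref{prop:corr}(b) in place of (a); the Transference Lemma is applied pointwise in $t$, and an affine reparametrization of $t$ maps unbounded sets to unbounded sets, so all implications go through unchanged. This gives $Y\in\operatorname{WA}_{m,n}^\times$ iff for every $\varepsilon>0$ the set of $t$ with $(tE_1).x_Y\cap\{\delta<\varepsilon\}\ne\emptyset$ is unbounded. Finally, to identify this with unboundedness of the orbit $E.x_Y$: by Mahler's compactness criterion a subset of $X$ is precompact iff $\delta$ is bounded below on it, so $E.x_Y$ is unbounded iff $\inf_{a\in E}\delta(a.x_Y) = 0$; writing $a = s b$ with $b\in E_1$ and $s>0$ and noting $E = \bigcup_{s>0}sE_1$, this infimum being $0$ is exactly the condition that for every $\varepsilon>0$ there is some $s$ (which we may take arbitrarily large, since for bounded $s$ the set $\bigcup_{s\le s_0}(sE_1).x_Y$ is precompact) with $(sE_1).x_Y\cap\{\delta<\varepsilon\}\ne\emptyset$.

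\medskip

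\noindent I expect the only genuine subtlety to be the bookkeeping in the converse direction of (a): one must check that the exponents chosen for $\varepsilon$ in terms of $c$ are compatible with the exponents appearing in the two parts of Lemma \ref{lem:transference} — i.e.\ that feeding $\delta<c^{(m+n-1)/(m(m+n))}$ into part (b) is legitimate and yields $\delta<c^{1/(m+n)} = e^{-R_c}$ — and that the affine shift $t\mapsto t-\tfrac{n-1}{m+n}\log c$ does not spoil the ``for all large $t$'' quantifier. Both are routine once the constants are tracked carefully. The reduction of orbit unboundedness to the $\varepsilon$-$t$ condition via Mahler's criterion is standard and should be stated in one line.
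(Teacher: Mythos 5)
Your proposal is correct and follows essentially the same route as the paper: specialize Corollary \ref{prop:corr} to $\psi = \min\{1,c\psi_1\}$ (where $R\equiv R_c$), use Lemma \ref{lem:transference}(a) to pass from $C_{R_c}(t)$ to $C_0(t)$ in the forward direction with $c=\varepsilon^{m(m+n)}$, and Lemma \ref{lem:transference}(b) with $\varepsilon=c^{(m+n-1)/(m(m+n))}$ in the converse, noting that the time shift $t\mapsto t-\tfrac{n-1}{m+n}\log c$ preserves the ``for all large $t$'' (resp.\ ``unbounded set of $t$'') quantifier. The one place you add genuine content is the justification of the ``orbit $E.x_Y$ unbounded'' reformulation in part (b) via Mahler's criterion together with the observation that $\bigcup_{0<s\le s_0}(sE_1).x_Y$ is precompact (since $E_1$ is a bounded slice of $\mf a$), which the paper leaves to the reader with the remark ``the other part follows in a similar manner''; this is a correct and useful completion of that step.
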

\begin{proof}
This corollary is a simple consequence of Lemma \ref{lem:transference}  and Corollary \ref{prop:corr}. We explain the proof of  (a), the other part follows in a similar manner. 

Take $Y\in\operatorname{Sing}_{m,n}^{\times}$, choose an arbitrary $\varepsilon > 0$ and let $c=\varepsilon^{m(m+n)}$. Then we have $Y\in \UA_{m,n}^{\times}
(c\psi_1)$, hence, by Corollary \ref{prop:corr},
for all sufficiently large $t$ there exists $a\in C_{R_c}(t)$ with $$\delta ({a.x}_{Y}) < e^{-R_c} = c^{\frac{1}{m+n}}.$$ Lemma \ref{lem:transference} then produces $a'\in C_0(t)$ with $\delta( a'.x_Y)<c^{\frac{1}{m(m+n)}} = \varepsilon.$ 

Conversely,   choose an arbitrary $c > 0$ and let $\varepsilon = c^{\frac{m+n-1}{m(m+n)}}$.
If we know that for  all sufficiently large $t$ there exists $a\in C_0(t)$   such that $\delta ({a.x}_{Y}) < \varepsilon$, then Lemma \ref{lem:transference} implies that for  all sufficiently large $t'$ there exists $a'\in C_{R_{c}}(t')$ with $\delta(a'.x_Y)<{c}^{\frac{1}{m+n}}$. This,  in view of Corollary \ref{prop:corr}, shows that $Y\in \UA_{m,n}^{\times}
(c\psi_1)$. \end{proof}

We remark that part (b) generalizes \cite[Proposition 11.1]{EKL06} where the case $m=2$, $n=1$ was considered. The general case was stated without proof in \cite[Remark~4.5]{FK24}.

Another remark is that Corollary \ref{cor:sing}(a) and Mahler's Compactness Criterion can be used to equivalently restate Question \ref{quest:3} in the language of dynamics on the space of lattices as follows: 
\begin{quest}Is it true that \begin{itemize}
    \item[{\rm (a)}] there \textit{does not exist} $Y\in \R^{m\times n}$ such that 
    \eq{ULCexception}{(tE_1).x_{Y}\subset K\text{ for some compact set $K\subset X$ and an unbounded set of } t\ge 0?}
     \item[{\rm (b)}] if $Y$ satisfies \equ{ULCexception}, then the orbit $E.x_{Y}$ is bounded in $X$?
\end{itemize}
\end{quest}

We conclude this section by deducing Theorem \ref{thm:special} from Corollary \ref{cor:sing} and Theorem~\ref{cor:diophantine}.

\subsection{Proof of Theorem \ref{thm:special}}
In what follows, we make use of the notation introduced to state Theorem \ref{cor:diophantine}. Let $\nu$ be the $U_{m,n}$-invariant probability measure supported on the compact $U_{m,n}$-orbit $\{x_Y: Y\in \R^{m\times n}\}$. For a fixed $0<\sigma<\frac1{\max\{m,n\}}$ consider the following subset of $E\subset \mf a$:
\begin{equation}
\label{eq:Esigma}
M_{\sigma}:=\left\{a\in E_1:a_1,\dotsc,a_m\geq\sigma\mbox{ and } a_{m+1},\dotsc,a_{m+n}\leq-\sigma \right\},    
\end{equation}
where $E_1$ is as in \equ{E1}. Then $M_{\sigma} \subset E_1$ is a bounded $(m+n-2)$-dimensional $\mathscr{C}^1$ submanifold compactly contained in the cone $E=C_0$. 
By the first part of Theorem \ref{cor:diophantine}, we have that 
$\nu$ is $\big(M_{\sigma},\mathscr{C}_{c}^{\infty}(X)\big)$-generic; namely, for every $\varphi\in \mathscr{C}_{c}^{\infty}(X)$ and for $\nu$-a.e. $x\in X$ it holds that
\begin{equation*}\label{eq:ge}
    \frac{1}{t^{m+n-2}\cdot \vol_{m+n-2}(M_{\sigma})}\int_{tM_{\sigma}} \varphi(a.x)\,\dd a \rightarrow \int_X \varphi\,\dd \mu\quad\mbox{as }t\to\infty.
\end{equation*}
This implies that for $\nu$-a.e.\ $x$ the sets $(tM_{\sigma}).x$ equidistribute as $t\to \infty$, and hence, for any fixed $\varepsilon>0$, they must intersect the cuspidal neighborhood $\{x\in X: \delta(x)<\varepsilon\}$ if $t$ is large enough in terms of $\varepsilon$. The conclusion follows from Corollary \ref{cor:sing}(a).
\qed

\smallskip

A modification of Theorem \ref{thm:genericgeneral}, where  $\varphi$ is replaced with a smooth cut-off of the characteristic function of the set $\left\{x\in X: \delta(x)<e^{-R(t)}\right\}$, will yield a proof of Theorem~\ref{thm:main}. In the next section we present a detailed account of this argument.

\section{Proof of Theorem 
\ref{thm:main}}
\label{sec:Diophproof}

\subsection{Construction of a mollifier}
\label{subsec:moll}
In this subsection we construct a mollifier on the group $G=\operatorname{SL}_{m+n}(\mb R)$, which will later be needed to adapt the proof Theorem \ref{thm:special} to sequences of functions $\varphi$ depending on the parameter $t$.

Let $V_{\id}$ be a symmetric open neighborhood of $\operatorname{id}$ in $G$ that admits a parametrization $\psi:B_{1}(\bs 0)\to V_{\id}$, where the ball $B_{1}(\bs 0)$ is contained in $\mb R^{{(m+n)^2-1}}$.  We may assume that $V_{\id}$ is so small that the right-invariant distance $\operatorname{d}_{G}$ induced by a fixed scalar product on $\mf{sl}_{m+n}(\mb R)$ is Lipschitz-equivalent to any matrix norm on $G$ within $V_{\id}$. From now on, we fix one such matrix norm $\|\cdot\|$ and, with an abuse of notation, for $r>0$ we let $B_r(\id)$ denote the ball $\{g\in G:\|g-\id\|<r\}$. We also select a positive number $r_0>0$ such that the ball $B_{r_0}(\operatorname{id})\subset V_{\id}$.

\medskip

We recall that the Haar measure on $G$ is given by a smooth volume form. This can be seen as follows. Fix an orthonormal basis $\{\alpha_1,\dotsc,\alpha_{(m+n)^2-1}\}$ of $\mf{sl}_{m+n}(\mb R)$ and consider the alternating form $\omega_{\operatorname{id}}$ on $\mf{sl}_{m+n}(\mb R)$ given by
$$\omega_{\operatorname{id}}\left(A\left(\alpha_1,\dotsc,\alpha_{(m+n)^2-1}\right)\right):=\det A$$
for any $A\in{\mb R^{\left((m+n)^2-1\right)\times \left((m+n)^2-1\right)}}$. Then $\omega_{\operatorname{id}}$ can be extended to an invariant volume form $\omega$ on $G$ by setting
$$\omega_{g}:=\left(g^{-1}\right)^{*}\omega_{\operatorname{id}}.$$
As a consequence of this, for the smooth form $\omega$ it holds that $g^{*}\omega=\omega$ for all $g\in G$. In particular, if $\varphi$ is a smooth function on $G$, we have that 
$$\int_{G}(\varphi\circ g^{-1})\cdot \omega=\int_{gG}g^{*}\big((\varphi\circ g^{-1})\cdot \omega\big)=\int_{G}\varphi\cdot g^{*}\omega=\int_{G}\varphi\cdot\omega,$$
showing that $\omega$ induces a Haar measure. In {a} chart, we have
\begin{equation}
\label{eq:density}
\begin{aligned}
 &\ \omega_{\psi(\bs x)}\left(\frac{\partial\psi}{\partial x_1}(\bs x),\dotsc,\frac{\partial\psi}{\partial x_{(m+n)^2-1}}(\bs x)\right)\\ =  &\ \omega_{\operatorname{id}}\left(\psi(\bs x)^{-1}\frac{\partial\psi}{\partial x_1}(\bs x),\dotsc,\psi(\bs x)^{-1}\frac{\partial\psi}{\partial x_{(m+n)^2-1}}(\bs x)\right) \\
 =  &\ \det\left(\left(\psi(\bs x)^{-1}\frac{\partial\psi}{\partial x_1}(\bs x),\dotsc,\psi(\bs x)^{-1}\frac{\partial\psi}{\partial x_{(m+n)^2-1}}(\bs x)\right)^{t}\cdot\left(\alpha_1,\dotsc,\alpha_{(m+n)^2-1}\right)\right) \\
=: &  \ f(\alpha_1,\dotsc,\alpha_{(m+n)^2-1},\bs x).
\end{aligned}
\end{equation}
Then, by definition of integration of a volume form, for any smooth function $\varphi$ supported on $B_{r_0}(\operatorname{id})$ it holds that
$$\mu(\varphi)=\int_{G}\varphi\cdot \omega=\int_{B_{1}(\bs 0)}\varphi\big(\psi(\bs x)\big)\cdot f(\alpha_1,\dotsc,\alpha_{(m+n)^2-1},\bs x)\,\dd\bs x,$$
proving that the Haar measure $\mu$ induced by $\omega$ is locally the Lebesgue measure up to a density function $f(\alpha_1,\dotsc,\alpha_{(m+n)^2-1},\bs x)$. Note that the function $f$ is never $0$ in view of \eqref{eq:density}.

\medskip

Let $0<r<r_0$ and let $\rho$ be a fixed bump function of integral $1$ supported on $B_{1}(\bs 0)$. For any $\bs x\in B_{1}(\bs 0)$ we define
\begin{equation}
\label{eq:rho}
\rho_r\big(\psi(\bs x)\big):=r^{-(m+n)^2+1}\frac{\rho\left(r^{-1}\bs x\right)}{f(\alpha_1,\dotsc,\alpha_{(m+n)^2-1},\bs x)}.
\end{equation}

\begin{lem}
\label{lem:mollifierprop}
The function $\rho_r$ defined in \eqref{eq:rho} is a smooth positive and compactly supported function on $B_{r}(\operatorname{id})$ such that
\begin{itemize}
    \item[$(i)$] For any choice of directions $\beta_1,\dotsc,\beta_k\in\mf{sl}_{m+n}(\mb R)$ with $\|\beta_i\|=1$ it holds that
    $$\|D_{\beta_k}\circ\dotsb\circ D_{\beta_1}\rho_r\|_{\infty}\ll_{k} r^{-(m+n)^2+1-k}.$$
    \item[$(ii)$] For all values of $r>0$ it holds that $\int_{G}\rho_r\,\dd\mu=1$.\vspace{2mm}
    \item[$(iii)$] There exists a constant $C_1>0$ such that $\operatorname{supp}(\rho_r)\subset B_{C_1r}(\id)$ for any $r>0$.
\end{itemize}
\end{lem}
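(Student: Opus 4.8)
The plan is to transport every assertion to the chart $\psi$ and reduce it to an elementary statement about the Euclidean bump function $\rho$ and the smooth density $f$. Throughout I write $N:=(m+n)^2-1$ and abbreviate $f(\bs x):=f(\alpha_1,\dotsc,\alpha_{N},\bs x)$, so that by \eqref{eq:rho} one has $(\rho_r\circ\psi)(\bs x)=r^{-N}\rho(r^{-1}\bs x)/f(\bs x)$ for $\bs x\in B_1(\bs 0)$; shrinking $r_0$ if necessary I assume $r_0<1$. First I would record the regularity of the ingredients: the chart $\psi$ is a smooth diffeomorphism onto $V_{\id}$, while $f$ is smooth (a determinant assembled from $\psi^{-1}$ and the partials $\partial\psi/\partial x_i$) and, as observed right after \eqref{eq:density}, nowhere zero — in fact positive once the orientation of the chart is fixed. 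Since $\rho\ge 0$ is smooth with support in $B_1(\bs 0)$, it follows that $\rho_r\circ\psi$ is smooth and nonnegative on $B_1(\bs 0)$ with support contained in the compact set $\overline{B_r(\bs 0)}\subset B_1(\bs 0)$; hence $\rho_r$, extended by $0$ off $V_{\id}$, is a smooth, nonnegative, compactly supported function on $G$ which is positive on the nonempty open set $\psi(B_r(\bs 0))$.

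Next I would dispose of $(iii)$ and $(ii)$. For $(iii)$, set $C_1:=\sup_{\|\bs x\|\le r_0}\|D\psi(\bs x)\|$, which is finite by compactness and independent of $r$; since $\psi(\bs 0)=\id$ and $\overline{B_{r_0}(\bs 0)}$ is convex, the mean value inequality gives $\|\psi(\bs x)-\id\|\le C_1\|\bs x\|$ there, so $\operatorname{supp}(\rho_r)\subset\psi(\overline{B_r(\bs 0)})\subset B_{2C_1 r}(\id)$, and renaming $2C_1$ as $C_1$ proves $(iii)$. For $(ii)$ I would invoke the local expression for $\mu$ derived just before the lemma — whose derivation uses only that the test function is supported in the chart domain $V_{\id}$, hence applies to $\rho_r$ — to obtain
\[
\int_{G}\rho_r\,\dd\mu=\int_{B_1(\bs 0)}(\rho_r\circ\psi)(\bs x)\,f(\bs x)\,\dd\bs x=r^{-N}\int_{B_1(\bs 0)}\rho(r^{-1}\bs x)\,\dd\bs x=\int_{\mb R^{N}}\rho(\bs y)\,\dd\bs y=1,
\]
after the substitution $\bs y=r^{-1}\bs x$ and using $\operatorname{supp}\rho\subset B_1(\bs 0)$ together with $r<1$.

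The substantive point is $(i)$. Fixing the identification of $\mf{sl}_{m+n}(\mb R)$ with left-invariant vector fields on $G$, so that $D_{\beta}\varphi(g)=\tfrac{\dd}{\dd s}\big|_{s=0}\varphi(g\exp(s\beta))$, the operator $D_{\beta_k}\circ\dotsb\circ D_{\beta_1}$ is a $k$-th order differential operator which, pulled back to the chart, takes the form $\sum_{|\bs\alpha|\le k}c_{\bs\alpha}(\bs x)\,\partial^{\bs\alpha}$ with the $c_{\bs\alpha}$ smooth on $B_1(\bs 0)$. I would then argue that the $c_{\bs\alpha}$, together with all their derivatives, are bounded on $\overline{B_{r_0}(\bs 0)}$ \emph{uniformly} over unit vectors $\beta_1,\dotsc,\beta_k$: this follows from the linear dependence of each factor on its $\beta_i$ combined with compactness of the unit sphere of $\mf{sl}_{m+n}(\mb R)$, and it is the one place where a little care is needed — differentiating the directions away naively would not yield the stated uniform constant. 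With $g_0:=1/f$ smooth and having bounded derivatives on $\overline{B_{r_0}(\bs 0)}$, the Leibniz rule gives, for $|\bs\alpha|\le k$ and $\bs x\in\overline{B_r(\bs 0)}$,
\[
\big|\partial^{\bs\alpha}\big(r^{-N}\rho(r^{-1}\bs x)\,g_0(\bs x)\big)\big|\le r^{-N}\sum_{\bs\beta\le\bs\alpha}\binom{\bs\alpha}{\bs\beta}r^{-|\bs\beta|}\big|(\partial^{\bs\beta}\rho)(r^{-1}\bs x)\big|\,\big|\partial^{\bs\alpha-\bs\beta}g_0(\bs x)\big|\ll_{k}r^{-N-k},
\]
where $r<1$ was used to bound $r^{-|\bs\beta|}\le r^{-k}$ and the $\rho$- and $g_0$-derivatives were estimated by their suprema. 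Multiplying by the uniformly bounded coefficients $c_{\bs\alpha}$ and summing over $|\bs\alpha|\le k$ then gives $\|D_{\beta_k}\circ\dotsb\circ D_{\beta_1}\rho_r\|_{\infty}\ll_{k}r^{-N-k}=r^{-(m+n)^2+1-k}$, with an implied constant depending only on $k$ (through $\rho$, $\psi$, $f$, and $N$) and not on the chosen unit directions $\beta_i$, which is exactly $(i)$.
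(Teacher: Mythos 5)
Your proof is correct and follows essentially the same approach as the paper: pass to the chart $\psi$, observe that the Haar density $f$ is smooth and nonvanishing there, and bound derivatives of the rescaled bump via the Leibniz/chain rule together with compactness of $\overline{B_{r_0}(\bs 0)}$; parts $(ii)$ and $(iii)$ coincide with the paper's computation almost verbatim. One remark: you fix the convention $D_\beta\varphi(g)=\tfrac{\dd}{\dd s}\big|_{s=0}\varphi(g\exp(s\beta))$, whereas the paper --- both in its own proof of $(i)$, via the curve $\gamma_\alpha(\bs x,t)=\psi^{-1}(\exp(t\alpha)\psi(\bs x))$, and in the later application to Lemma \ref{lem:phitrproperties} --- uses $D_\beta\varphi(g)=\tfrac{\dd}{\dd s}\big|_{s=0}\varphi(\exp(s\beta)g)$; the bound in $(i)$ and your argument are identical in either case, but for compatibility with the convolution identity $D_\beta(\rho_r\ast\chi_{\Omega_t})=(D_\beta\rho_r)\ast\chi_{\Omega_t}$ used in the proof of Lemma \ref{lem:phitrproperties}, the paper's (left-translation) convention is the one to adopt.
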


\begin{proof}
Let $\alpha\in\mf{sl}_{m+n}(\mb R)$ with $\|\alpha\|=1$. For any $\bs x\in B_{1}(\bs 0)$, we define
$$\gamma_{\alpha}(\bs x,t):=\psi^{-1}\big(\exp(t\alpha)\psi(\bs x)\big).$$
This is a smooth function, given that $\psi^{-1}$ can always be extended to a smooth function on a neighbourhood of $\operatorname{id}$ in $\mb R^{(m+n)^2}$ by the inverse function theorem. Then for every $\bs x\in B_{1}(\bs 0)$ one has that
\begin{multline*}
D_{\alpha}\rho_r\big(\psi(\bs x)\big)=r^{-(m+n)^2+1}\frac{\dd}{\dd t}\rho\bigg(\frac{r^{-1}\cdot\gamma_{\alpha}(\bs x,t)}{f(\alpha_1,\dotsc,\alpha_{(m+n)^2-1},\gamma_{\alpha}\big(\bs x,t)\big)}\bigg)_{|t=0} \\
=r^{-(m+n)^2}\sum_{i=1}^{(m+n)^2-1}\frac{\partial\rho}{\partial x_i}\left(\frac{r^{-1}\cdot\bs x}{f(\alpha_1,\dotsc,\alpha_{(m+n)^2-1},\bs x)}\right) \\
\cdot\frac{\dd}{\dd t}\left(\frac{\gamma_{\alpha}(\bs x,t)}{f\big(\alpha_1,\dotsc,\alpha_{(m+n)^2-1},\gamma_{\alpha}(\bs x,t)\big)}\right)_{|t=0}.
\end{multline*}
Using the fact that all functions are smooth and defined on compact sets, we deduce that $\|D_{\alpha}\rho\|_{\infty}\ll r^{-(m+n)^2}$. Higher order derivatives are treated in the same way, hence $(i)$ follows.

\medskip

As for $(ii)$, we see that
\begin{align*}
& \int_{G}\rho_r\,\dd\mu=\int_{B_{r}(\bs 0)}\rho_r\big(\psi(\bs x)\big)\cdot f(\alpha_1,\dotsc,\alpha_{(m+n)^2-1},\bs x)\,\dd\bs x \\
& =\int_{B_{r}(\bs 0)}r^{-(m+n)^2+1}\frac{\rho\left(r^{-1}\bs x\right)}{f(\alpha_1,\dotsc,\alpha_{(m+n)^2-1},\bs x)}\cdot f(\alpha_1,\dotsc,\alpha_{(m+n)^2-1},\bs x)\,\dd\bs x \\
&=\int_{B_{1}(\bs 0)}\rho(\bs y)\,\dd\bs y=1.
\end{align*}

\medskip

For part $(iii)$, we observe that $\operatorname{supp}(\rho\big(r\cdot \bs x)\big)\subset B_{r}(\bs 0)$. Since $\psi$ has bounded derivative over $B_{1}(\bs 0)$, by the Mean Value Theorem there there exists a constant $C_1>0$, only depending on the choice of $\psi$ such that $\psi(\operatorname{supp}\big(\rho_r)\big)\subset B_{C_1r}(\id)$. This concludes the proof.
\end{proof}

\subsection{Annuli around the cusp}

We aim to extend the argument used to prove Theorem \ref{thm:special} to cuspidal neighborhoods in the space $X$ that shrink as the time $t$ increases. To do so, we need to smooth out the characteristic {functions} of such neighborhoods and to make the dependence on the parameter $t$ explicit in all our estimates. This is precisely the goal of {this} subsection.

In what follows, we make use of the notation introduced in Subsection \ref{subsec:moll}. In particular, a matrix norm $\|\cdot\|$ and a right-invariant distance $\dd_G$ are chosen on $G$. The real number $r_0$ is fixed and small enough so that the ball $B_{r_0}(\id):=\{g\in G:\|g-\id\|<r_0\}$ is contained in a symmetric open neighborhood $V_{\id}$ of the identity in $G$ where the matrix norm and the right-invariant distance $\dd_G$ on $G$ are Lipschitz equivalent.

Given a continuous function $R:[0,\infty)\to[0,\infty)$ and $t>0$ we define
\begin{equation}
\label{eq:at}
\Omega_{t}:=\left\{x\in X:e^{-R(t)-1}\leq \delta(x)<e^{-R(t)}\right\}.    
\end{equation}
Recall that the element $x$ is identified here with a lattice in $\mb R^{m+n}$ and that $\delta$ denotes the length of the shortest vector with respect to the supremum norm on $\mb R^{m+n}$, henceforth also denoted by $\|\cdot\|$. Fix a small radius $0<r<r_0$ and for $t>0$ put
\begin{equation}
\label{eq:varphitr}
\varphi_{t,r}(x):=\rho_{r}\ast \chi_{\Omega_t}(x)=\int_{G}\rho_r(h)\cdot \chi_{\Omega_t}(h^{-1}x)\,\dd\mu(h),    
\end{equation}
where $\rho_r$ is the mollifier introduced in the previous subsection.
\begin{lem}
\label{lem:phitrproperties}
Let $\|\cdot \|_{\infty}$ denote the supremum norm on the space $\mathscr{C}^{\infty}_c(X)$ and let $\|\cdot\|_{h}$ denote the $h$-norm on the space $\operatorname{L}^{h}(X,\mu)$. Then for any $t>0$:
\begin{itemize}
    \item[$(i)$] $\mu(\varphi_{t,r})=\mu(\Omega_t)$;\vspace{2mm}
    \item[$(ii)$] for all $k\geq 0$ and all choices of directions $\beta_1,\dotsc,\beta_k\in\mf{sl}_{m+n}(\mb R)$, with $\|\beta_i\|=1$, it holds that
    $$\|D_{\beta_k}\circ\dotsb\circ D_{\beta_1}\varphi_{t,r}\|_{\infty}\ll_{k} r^{-k},$$
where the expression $D_{\beta_k}\circ\dotsb\circ D_{\beta_1}\varphi_{t,r}$ stands for $\varphi_{t,r}$ when $k=0$. \vspace{2mm}
\item[$(iii)$] for all $h\geq 1$, all $k\geq 0$, and all choices of directions $\beta_1,\dotsc,\beta_k\in\mf{sl}_{m+n}(\mb R)$, with $\|\beta_i\|=1$, it holds that
    $$\|D_{\beta_k}\circ\dotsb\circ D_{\beta_1}\varphi_{t,r}\|_{h}\ll_{k} r^{-k}\cdot\mu(\Omega_t)^{1/h},$$
where the expression $D_{\beta_k}\circ\dotsb\circ D_{\beta_1}\varphi_{t,r}$ stands for $\varphi_{t,r}$ when $k=0$.
\end{itemize}
\end{lem}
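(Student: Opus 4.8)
The plan is to verify the three claims in order, relying on the fact (established in Subsection \ref{subsec:moll}) that $\mu$ is $G$-left-invariant, that $\rho_r$ has total mass $1$, that $\operatorname{supp}(\rho_r)\subset B_{C_1 r}(\id)$, and that the derivatives of $\rho_r$ satisfy $\|D_{\beta_k}\circ\cdots\circ D_{\beta_1}\rho_r\|_\infty\ll_k r^{-(m+n)^2+1-k}$. For part $(i)$, I would use Fubini's theorem together with left-invariance of $\mu$: swapping the order of integration in $\mu(\varphi_{t,r})=\int_X\int_G\rho_r(h)\chi_{\Omega_t}(h^{-1}x)\,\dd\mu(h)\,\dd\mu(x)$ and noting that for each fixed $h$ the inner integral $\int_X\chi_{\Omega_t}(h^{-1}x)\,\dd\mu(x)=\mu(h\Omega_t)=\mu(\Omega_t)$ yields $\mu(\varphi_{t,r})=\mu(\Omega_t)\int_G\rho_r\,\dd\mu=\mu(\Omega_t)$.

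For part $(ii)$, the key observation is that differentiation can be moved onto the mollifier: since $\varphi_{t,r}$ is a convolution, one has $D_{\beta_k}\circ\cdots\circ D_{\beta_1}\varphi_{t,r}(x)=\int_G (D_{\beta_k}\circ\cdots\circ D_{\beta_1}\rho_r)(h)\,\chi_{\Omega_t}(h^{-1}x)\,\dd\mu(h)$ — here one must be slightly careful about whether the derivatives act via the right-invariant or left-invariant structure, and use the relation between $D_\beta$ acting on the $x$-variable of $\rho_r(h^{-1}x)$ versus $\rho_r$ itself, possibly at the cost of an $\operatorname{Ad}$-factor which is bounded on the compact set $B_{r_0}(\id)$. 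Then I bound the absolute value by $\|D_{\beta_k}\circ\cdots\circ D_{\beta_1}\rho_r\|_\infty\cdot\mu\big(\operatorname{supp}(\rho_r)\big)$. Since $\operatorname{supp}(\rho_r)\subset B_{C_1 r}(\id)$ and the Haar measure of a ball of radius $C_1 r$ is $\ll r^{(m+n)^2-1}$ (the manifold dimension of $G$), the two powers of $r$ cancel and leave $\ll_k r^{-k}$, as claimed; for $k=0$ this just recovers $\|\varphi_{t,r}\|_\infty\le 1$.

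For part $(iii)$, I would combine the $\operatorname{L}^\infty$ bound from $(ii)$ with an $\operatorname{L}^1$ bound. Writing $g:=D_{\beta_k}\circ\cdots\circ D_{\beta_1}\varphi_{t,r}$, one has $\|g\|_h^h=\int_X|g|^h\,\dd\mu\le \|g\|_\infty^{h-1}\int_X|g|\,\dd\mu$, so it suffices to show $\|g\|_1\ll_k r^{-k}\mu(\Omega_t)$. But $\int_X|g(x)|\,\dd\mu(x)\le\int_X\int_G|(D_{\beta_k}\circ\cdots\circ D_{\beta_1}\rho_r)(h)|\,\chi_{\Omega_t}(h^{-1}x)\,\dd\mu(h)\,\dd\mu(x)$, and by Fubini and left-invariance the $x$-integral contributes $\mu(\Omega_t)$, leaving $\|D_{\beta_k}\circ\cdots\circ D_{\beta_1}\rho_r\|_1$; the latter is $\ll_k r^{-(m+n)^2+1-k}\cdot\mu(\operatorname{supp}\rho_r)\ll_k r^{-k}$ by the same ball-volume estimate as above. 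Hence $\|g\|_1\ll_k r^{-k}\mu(\Omega_t)$, and plugging into the interpolation inequality gives $\|g\|_h\ll_k\big(r^{-k}\big)^{(h-1)/h}\big(r^{-k}\mu(\Omega_t)\big)^{1/h}=r^{-k}\mu(\Omega_t)^{1/h}$, as required.

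The only mildly delicate point — and the one I would expect to be the main obstacle — is the interchange of the invariant differential operators $D_{\beta_i}$ with the convolution integral in parts $(ii)$ and $(iii)$: one needs to justify that $D_\beta(\rho_r\ast\chi_{\Omega_t})=(D_\beta\rho_r)\ast\chi_{\Omega_t}$ (up to a bounded adjoint twist) even though $\chi_{\Omega_t}$ is merely $\operatorname{L}^\infty$ and not differentiable. This follows from dominated convergence applied to the difference quotients, using that $\rho_r$ is smooth and compactly supported so that its difference quotients converge uniformly and are dominated; everything else is a routine volume-of-a-small-ball computation on the $(m+n)^2-1$–dimensional group $G$.
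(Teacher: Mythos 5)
Your parts $(i)$ and $(ii)$ follow the paper's argument essentially verbatim: Fubini plus $G$-invariance of $\mu$ for $(i)$, and moving the $D_{\beta_i}$'s onto $\rho_r$ and bounding by $\mu\big(B_{C_1 r}(\id)\big)\cdot\|D_{\beta_k}\cdots D_{\beta_1}\rho_r\|_\infty \ll r^{(m+n)^2-1}\cdot r^{-(m+n)^2+1-k}=r^{-k}$ for $(ii)$. For $(iii)$ you take a genuinely different route: you interpolate $\|g\|_h^h\le\|g\|_\infty^{h-1}\|g\|_1$, prove $\|g\|_1\ll_k r^{-k}\mu(\Omega_t)$ by Fubini, and combine; the paper instead expands $(\phi_1\ast\phi_2)^h$ and applies the generalized H\"older inequality to obtain the Young-type bound $\|\phi_1\ast\phi_2\|_h\le\|\phi_1\|_1\|\phi_2\|_h$, which it then applies with $\phi_1=D_{\beta_k}\cdots D_{\beta_1}\rho_r$ and $\phi_2=\chi_{\Omega_t}$. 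Both arguments rest on the same two inputs, $\|D_{\beta_k}\cdots D_{\beta_1}\rho_r\|_1\ll_k r^{-k}$ and $\|\chi_{\Omega_t}\|_h=\mu(\Omega_t)^{1/h}$, and yield the same conclusion; yours is slightly more elementary (no multi-fold expansion, and it reuses your part $(ii)$), while the paper's Young-inequality route is self-contained and would remain sharp even if the $L^\infty$ and $L^1$ bounds were of different orders of magnitude. One small clarification on the point you flag as delicate: no $\operatorname{Ad}$-factor actually arises here. Because $G$ is unimodular, the left-translation change of variable $h\mapsto\exp(s\beta)h$ preserves Haar measure, and moving $\frac{\dd}{\dd s}\big|_{s=0}$ through the integral produces precisely the (right-invariant) derivative $D_\beta\rho_r$ used in Lemma~\ref{lem:mollifierprop}; the paper justifies differentiating under the integral sign by noting the supports $\exp(s\beta)B_r(\id)$ for $|s|<1$ all lie in a fixed set of finite measure, which plays the role of the domination you invoke.
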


\begin{proof}
For $(i)$, we see that
\begin{equation*}
\int_{X}\int_{G}\rho_r(g)\cdot \chi_{\Omega_t}(g^{-1}x)\,\dd\mu(g)\,\dd\mu(x) \\
=\int_{G}\rho_{r}\,\dd\mu\cdot \int_{X}\chi_{\Omega_t}\,\dd\mu=1\cdot \mu(\Omega_t),
\end{equation*}
where we used Lemma \ref{lem:mollifierprop} part $(ii)$. For $(ii)$, we observe first that
\begin{multline*}
D_{\beta_1}\varphi_{t,r}(x)=\frac{\dd}{\dd t}\int_{G}\rho_r(g)\cdot \chi_{\Omega_t}(g^{-1}\exp(t\beta_1)x)\,\dd\mu(g)_{|t=0} \\
=\frac{\dd}{\dd t}\int_{G}\rho_r(\exp(-t\beta_1)g)\cdot \chi_{\Omega_t}(g^{-1}x)\,\dd\mu(g)_{|t=0}=\int_{G}D_{\beta_1}\rho_r(g)\cdot \chi_{\Omega_t}(g^{-1}x)\,\dd\mu(g),
\end{multline*}
where the derivative can be taken under the integral sign, since for $0<t<1$ the sets $\exp(t\beta_1)B_{r}(\id)$ all lie in a larger set of finite measure. Then we deduce that
\begin{equation}
\label{eq:deriv}
\left|D_{\beta_1}\varphi_{t,r}(x)\right|\leq \mu\big(B_{C_1r}(\id)\big)\cdot\|D_{\beta_1}\rho_r\|_{\infty}\ll r^{-1},    
\end{equation}
where we have used  parts $(i)$ and $(iii)$ of Lemma \ref{lem:mollifierprop}.
When deriving in more than one direction the computation is analogous.

Finally, to prove $(iii)$, we see that for any $\phi_1\in L^1(G)$ and any $\phi_2\in L^{\infty}(X)$ it holds 
\begin{align*}
& \int_{X}(\phi_1\ast \phi_2)^h\,\dd\mu \\
& =\int_{X}\int_{G}\dotsb\int_{G}\phi_1(g_1)\dotsm\cdot\phi_1(g_h)\cdot\phi_2(g_{1}^{-1}x)\dotsm\phi_2(g_h^{-1}x)\,\dd\mu(g_1)\dotsb\,\dd\mu(g_h)\,\dd\mu(x) \\
& \leq \int_{G}\dotsb\int_{G}|\phi_1(g_1)|\dotsm|\phi_1(g_h)|\cdot\left|\int_{X}\phi_2(g_{1}^{-1}x)\dotsm\phi_2(g_h^{-1}x)\,\dd\mu(x)\right|\,\dd\mu(g_1)\dotsb\,\dd\mu(g_k) \\
&\leq \|\phi_1\|_{1}^h\cdot\|\phi_2\|_h^{h},    
\end{align*}
where in the last step we used H\"older's inequality. Combining this observation with the first inequality in \eqref{eq:deriv}, we deduce that
\begin{multline*}\|D_{\beta_k}\circ\dotsb\circ D_{\beta_1}\varphi_{t,r}\|_h\leq \|D_{\beta_k}\circ\dotsb\circ D_{\beta_1}\rho_r\|_{1}\cdot\|\chi_{\Omega_t}\|_h \\
\leq \mu\big(B_{C_1r}(\id)\big)\cdot\|D_{\beta_k}\circ\dotsb\circ D_{\beta_1}\rho_r\|_{\infty}\cdot\|\chi_{\Omega_t}\|_h\ll_k r^{(m+n)^2-1}\cdot r^{-(m+n)^2+1-k}\mu(\Omega_t)^{1/h}.
\end{multline*}
where we have used parts $(i)$ and $(iii)$ of Lemma \ref{lem:mollifierprop}.
\end{proof}

\begin{lem}
\label{lem:approx}
Assume that the chosen matrix norm on $G$ is the supremum norm. Then, provided $r_0\leq 1$, for any $r<r_0$ and $t>0$ we have that
$$\operatorname{supp}(\varphi_{t,r})\subset\left\{x\in X:\frac{1}{2C}e^{-R(t)-1}\leq \delta(x)<2e^{-R(t)}\right\},$$
where $C$ is the Lipschitz constant in the equivalence between the matrix norm $\|\cdot\|$ and the right-invariant distance $\dd_G$ within $B_{r_0}(\id)$.
\end{lem}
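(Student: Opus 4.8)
The statement asserts a support containment for the mollified characteristic function $\varphi_{t,r} = \rho_r * \chi_{\Omega_t}$, and the natural route is to unwind the definition \eqref{eq:varphitr}: a point $x$ lies in $\operatorname{supp}(\varphi_{t,r})$ only if there exists $h \in \operatorname{supp}(\rho_r) \subset B_{C_1 r}(\id)$ with $h^{-1}x \in \Omega_t$, i.e. $e^{-R(t)-1} \le \delta(h^{-1}x) < e^{-R(t)}$. So the whole problem reduces to controlling how much $\delta(\cdot)$ can change when one multiplies a lattice by a group element close to the identity. First I would make precise the comparison $\|h - \id\| < C_1 r < C_1 r_0$ and, using the hypothesis that the matrix norm is the supremum norm together with the Lipschitz equivalence with $\dd_G$ inside $B_{r_0}(\id)$, extract a bound of the form $\|h v\|_\infty / \|v\|_\infty \in [1/\kappa, \kappa]$ for all nonzero $v$, where $\kappa = \kappa(r_0)$ can be taken as close to $1$ as we wish by shrinking $r_0$ (concretely $\kappa \le 1 + (m+n) C_1 r_0$ or similar, and $r_0 \le 1$ makes this $\le 2$ after absorbing constants; the Lipschitz constant $C$ enters through the operator-norm estimate).

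\textbf{Key steps.} (1) Observe $\delta(h^{-1}x) = \min_{v \in x \setminus \{0\}} \|h^{-1} v\|_\infty$, so it suffices to bound $\|h^{-1}v\|_\infty$ above and below by constant multiples of $\|v\|_\infty$ uniformly in $v$. (2) For the upper bound, $\|h^{-1}v\|_\infty \le \|h^{-1}\|_{\mathrm{op}} \|v\|_\infty$, and since $h$ (hence $h^{-1}$) lies in $B_{C_1 r_0}(\id)$ one gets $\|h^{-1}\|_{\mathrm{op}} \le 1 + O(r_0)$; the Lipschitz equivalence between $\|\cdot\|$ and $\dd_G$ on $B_{r_0}(\id)$ is what guarantees $h^{-1}$ stays in a controlled neighborhood and lets us bound this by $2$ when $r_0$ is small enough (this is where the constant $C$ appears — it governs the size of the neighborhood of $\id$ containing $\operatorname{supp}(\rho_r)$ in the $\dd_G$-metric). (3) Symmetrically, $\|v\|_\infty = \|h \cdot h^{-1} v\|_\infty \le \|h\|_{\mathrm{op}} \|h^{-1}v\|_\infty$, giving $\|h^{-1}v\|_\infty \ge \|h\|_{\mathrm{op}}^{-1} \|v\|_\infty \ge \frac{1}{2C}\|v\|_\infty$ for $r_0$ small. (4) Taking minima over $v \in x\setminus\{0\}$ transfers these to $\delta$: from $h^{-1}x \in \Omega_t$ we get $\delta(x) \le 2\,\delta(h^{-1}x) < 2 e^{-R(t)}$ and $\delta(x) \ge \frac{1}{2C}\delta(h^{-1}x) \ge \frac{1}{2C} e^{-R(t)-1}$, which is exactly the claimed containment. (5) Conclude that any $x$ outside the stated annulus has $h^{-1}x \notin \Omega_t$ for every $h \in \operatorname{supp}(\rho_r)$, hence $\varphi_{t,r}(x) = 0$.

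\textbf{Main obstacle.} There is no deep difficulty here — the statement is essentially bookkeeping — but the one place requiring care is step (2)–(3): pinning down \emph{exactly} which constant ($2$, $1/(2C)$, $C_1$, $r_0$) absorbs which error, so that the final inequalities come out with the stated constants rather than merely "up to a constant". In particular one must check that the choice $r_0 \le 1$ (together with $C_1 r_0$ small) is genuinely sufficient to push the operator-norm distortion below the factor $2$, and that the asymmetry in the statement (factor $2$ on the outer radius, factor $2C$ on the inner) is consistent with using the operator norm bound in one direction and its reciprocal in the other. Once the constants are tracked honestly through the Lipschitz equivalence on $B_{r_0}(\id)$, the proof is a few lines.
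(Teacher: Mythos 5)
Your overall strategy is the same as the paper's: unwind the convolution to see that $x \in \operatorname{supp}(\varphi_{t,r})$ forces $h^{-1}x \in \Omega_t$ for some $h$ in a small matrix-norm ball around $\id$, then convert the annulus bound on $\delta(h^{-1}x)$ to an annulus bound on $\delta(x)$ via operator-norm estimates on $h$ and $h^{-1}$. You also correctly identify (in your "obstacle" paragraph) that the only delicate point is which constant absorbs which error. However, the bookkeeping in steps (2)--(3) does not actually deliver step (4), and the place where $C$ enters is mis-routed.

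The key issue is in step (2), where you write that ``$h$ (hence $h^{-1}$) lies in $B_{C_1 r_0}(\id)$.'' Matrix-norm balls around the identity are not closed under inversion, so $\|h-\id\|<C_1r_0$ does not by itself put $h^{-1}$ in a comparable matrix-norm ball. The paper's route is to pass to the right-invariant distance $\dd_G$, whose balls around $\id$ \emph{are} symmetric under $g\mapsto g^{-1}$, and then pull back with the Lipschitz equivalence. This is exactly how $C$ enters: it controls $\|h^{-1}-\id\|$, and under the hypothesis $r_0\le 1$ the resulting bound is $\|h^{-1}\|_{\operatorname{op}}\le 2C$, not $\le 2$ as you assert. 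By contrast $\|h\|_{\operatorname{op}}\le 1+C_1 r\le 2$ is direct and requires no $C$; this is why the final statement has the asymmetric constants (factor $2$ on the outer radius, factor $\tfrac1{2C}$ on the inner). Your step (3), which derives $\|h^{-1}v\|\ge\|h\|_{\operatorname{op}}^{-1}\|v\|\ge\tfrac1{2C}\|v\|$, is therefore doubly off: the factor $\tfrac1{2C}$ should not appear there (it should be $\ge\tfrac12\|v\|$), and more importantly a \emph{lower} bound on $\|h^{-1}v\|$ in terms of $\|v\|$ cannot produce the needed lower bound $\delta(x)\ge\tfrac1{2C}\delta(h^{-1}x)$. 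What one needs is the \emph{upper} bound $\|h^{-1}v\|\le\|h^{-1}\|_{\operatorname{op}}\|v\|$ applied to the shortest vector $v$ of $x$, giving $\delta(h^{-1}x)\le\|h^{-1}v\|\le\|h^{-1}\|_{\operatorname{op}}\delta(x)$, together with the $\dd_G$-symmetry argument to get $\|h^{-1}\|_{\operatorname{op}}\le 2C$. With those two corrections your proof closes and coincides with the paper's.
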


\begin{proof}
In what follows, with an abuse of notation, we denote by $\|\cdot\|$ both the supremum norm on $\mb R^{m+n}$ and the supremum matrix norm on $G$. Let $x=g\Gamma$, where $g\in G$ and $\Gamma=\operatorname{SL}_{m+n}(\mb Z)$. Then, by definition, we have that
$$\varphi_{t,r}(g\Gamma)=\int_{G}\rho_r(h)\cdot \chi_{\Omega_t}(h^{-1}g\Gamma)\,\dd\mu(h).$$
Hence $g\Gamma\in\operatorname{supp}(\varphi_{t,r})$ if and only if there exists $h\in B_{r}(\id)$ such that $h^{-1}g\Gamma\in \Omega_t$. 
Now, let $g\Gamma\in\operatorname{supp}(\varphi_{r,t})$ and let $\bs w\in\mb Z^{m+n}\setminus\{\bs 0\}$ such that $\|h^{-1}g\bs w\|<e^{-R(t)}$. Then it must be
$$e^{-R(t)}>\|h^{-1}g\bs w\|\geq \|h\|^{-1}\|g\bs w\|$$
(this holds for all matrix norms), whence
$$\|g\bs w\|\leq \|h\|e^{-R(t)}\leq (\|\id\|+\|\id -h\|)e^{-R(t)}\leq (1+r)e^{-R(t)},$$
showing that $\delta(g\Gamma)<2e^{-R(t)}$. On the other hand, we have $\delta(h^{-1}g\Gamma)\geq e^{-R(t)-1}$. Therefore for all $\bs w'\in\mb Z^{m+n}\setminus\{\bs 0\}$ it holds that
$$\|h^{-1}\|\cdot\|g\bs w'\|\geq \|h^{-1}g\bs w\|\geq e^{-R(t)-1},$$
whence
$$\|g\bs w'\|\geq \frac{1}{C(1+r)}e^{-R(t)-1},$$
where we used the fact that balls with respect to the invariant distance are symmetric and that the matrix norm $\|\cdot\|$ induces a distance that is Lipschitz equivalent (with constant $C$) to the invariant distance. This proves the other inequality.
\end{proof}

\subsection{Equidistribution of hyperplanes}

We now apply Lemma \ref{lem:4mom} to the functions $\varphi_{t,r}$ constructed in the previous section, to prove a modified version of Lemma \ref{lem:BC1}.

\begin{lem}
\label{lem:intersection} {Assume that $\max\{m,n\} > 1$.}
Let $0<\sigma<\frac1{\max\{m,n\}}$ and let
\begin{multline*}
J:=\left\{\bs s:=(s_1,\dotsc,s_{m-1},z_1,\dotsc,z_{n-1})\in [\sigma,1-\sigma]^{m+n-2}:\right. \\
\left.s_1+\dotsb +s_{m-1}\leq 1-\sigma,\ z_1+\dotsb +z_{n-1}\leq 1-\sigma\right\}.
\end{multline*}
Consider the global parametrization $\gamma:J\to M_{\sigma}\subset \mf a$  of 
$M_{\sigma}$ (defined in \eqref{eq:Esigma}) given by
\begin{multline*}
(s_1,\dotsc,s_{m-1},z_1,\dotsc,z_{n-1}) \\
\mapsto(s_1,\dotsc,s_{m-1},1-(s_1+\dotsb+s_{m-1}),-z_1,\dotsc,-z_{n-1},(z_1+\dotsb z_{n-1})-1),
\end{multline*}
and let $\varphi_{t,r}$ be as in \eqref{eq:varphitr} for some $r<r_0$. Finally, assume that for a fixed even $h\in\mb N$ the condition
\begin{equation}
\label{eq:measure}
\liminf_{t\to\infty}\frac{\log \mu(\Omega_t)}{\log t}> -(m+n-2)+\frac{m+n-1}{h}
\end{equation}
is satisfied, where $\Omega_t$ is defined in \eqref{eq:at}. Then for almost every $Y\in[0,1)^{m\times n}$ and all large enough values of $t\in\mb N$ (depending on $h,m,n,$ and $Y$) it holds that
\begin{equation}
\label{eq:thesis}
\int_{J}\varphi_{t,r}\left(\exp\big(t\gamma(\bs s\big)x_{Y}\right)\dd\bs s>\operatorname{Leb}(J)\cdot \mu(\Omega_t)/2.    
\end{equation}   
\end{lem}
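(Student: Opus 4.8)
The plan is to treat $x\mapsto\int_J\varphi_{t,r}(\exp(t\gamma(\bs s)).x)\,\dd\bs s$ as a random variable on $(X,\nu)$ that concentrates around its ``mean'' $\operatorname{Leb}(J)\mu(\Omega_t)$, to quantify this via the $h$-th moment bound of Lemma~\ref{lem:4mom} applied to the centred functions, and to conclude with Borel--Cantelli along $t\in\N$. Writing $\bar\varphi_{t,r}:=\varphi_{t,r}-\mu(\Omega_t)$, so that $\mu(\bar\varphi_{t,r})=0$ by Lemma~\ref{lem:phitrproperties}$(i)$ and
\[
\int_J\varphi_{t,r}(\exp(t\gamma(\bs s)).x)\,\dd\bs s=\operatorname{Leb}(J)\mu(\Omega_t)+\int_J\bar\varphi_{t,r}(\exp(t\gamma(\bs s)).x)\,\dd\bs s,
\]
and noting $\operatorname{Leb}(J)>0$ (since $\sigma<1/\max\{m,n\}$) and $\mu(\Omega_t)>0$, it will be enough to show that for $\nu$-a.e.\ $x$ --- equivalently, a.e.\ $Y\in[0,1)^{m\times n}$, as $Y\mapsto x_Y$ carries normalised Lebesgue measure to $\nu$ --- one has $|\int_J\bar\varphi_{t,r}(\exp(t\gamma(\bs s)).x)\,\dd\bs s|\le\tfrac13\operatorname{Leb}(J)\mu(\Omega_t)$ for all large $t\in\N$. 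To apply Lemma~\ref{lem:4mom} I would use that $\gamma$ is affine and $J$ bounded convex (so \equ{lambdas} holds) and that $M_\sigma=\gamma(J)$ is a compact $\mathscr C^1$ submanifold with $\overline{M_\sigma}\subset E$, so that --- exactly as in the proof of Theorem~\ref{cor:diophantine} --- the effective multiple mixing \equ{4mix} for $\nu$ (with a suitable Sobolev norm $S$, and with $\mc F$ the algebra generated by $\mathscr C^\infty_c(X)$ and constants, for which \equ{norm1}--\equ{4mix} persist, constant factors passing harmlessly through the correlations) follows from Theorem~\ref{BG23} via the comparison $\Delta\le\sigma'\bar\Delta$ on $M_\sigma$.

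Next I would apply Lemma~\ref{lem:4mom} with $\varphi=\bar\varphi_{t,r}$ and $f\equiv1$, after estimating the quantities it involves. Since $0\le\varphi_{t,r}\le1$ one gets $\|\bar\varphi_{t,r}\|_\infty\le1$, hence $B_{\bar\varphi_{t,r},h}\le1$; Lemma~\ref{lem:phitrproperties}$(i)$ and $(iii)$ (the latter with $k=0$) give $\mu(|\bar\varphi_{t,r}|)\le2\mu(\Omega_t)$ and $\mu(|\bar\varphi_{t,r}|^h)\ll_h\mu(\Omega_t)$, hence $A_{\bar\varphi_{t,r},h}\ll_h\mu(\Omega_t)$; and Lemma~\ref{lem:phitrproperties}$(iii)$ with $h=2$ controls all the derivative $L^2$-norms entering $S$, giving $S(\bar\varphi_{t,r})\ll_r\mu(\Omega_t)^{1/2}$ and hence $C_{\bar\varphi_{t,r},h}\ll_{r,h}\mu(\Omega_t)^{h/2}$. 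Feeding these into Lemma~\ref{lem:4mom} with $k=m+n-2$ should yield, for every $0<b<1$,
\[
\int_X\Big(\int_J\bar\varphi_{t,r}(\exp(t\gamma(\bs s)).x)\,\dd\bs s\Big)^h\dd\nu(x)\ll\mu(\Omega_t)^{h/2}b^{kh/2}+b^{k(h-1)}+\mu(\Omega_t)^{h/2}e^{-bt}.
\]

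Then, setting $d:=m+n-1$ (so $d-k=1$), hypothesis \equ{measure} provides a $q$ with $0<q<k-\tfrac dh$ and $\mu(\Omega_t)>t^{-q}$ for all large $t$; I would choose $b:=t^{-1+\epsilon}$ with $\epsilon>0$ to be fixed. Markov's inequality applied to $(\int_J\bar\varphi_{t,r})^h$ (legitimate since $h$ is even), the moment bound, and $\mu(\Omega_t)^{-1}<t^q$ then give, with $\lambda_t:=\tfrac13\operatorname{Leb}(J)\mu(\Omega_t)$,
\[
\nu\Big(\big\{x:|{\textstyle\int_J}\bar\varphi_{t,r}(\exp(t\gamma(\bs s)).x)\,\dd\bs s|>\lambda_t\big\}\Big)\ll t^{\frac h2(q-k)+\frac{\epsilon kh}2}+t^{h(q-k)+k+\epsilon k(h-1)}+t^{hq/2}e^{-t^\epsilon}.
\]
Because $q<k-\tfrac dh$ \emph{strictly}, $\tfrac h2(q-k)<-\tfrac d2$ and $h(q-k)+k<-(d-k)=-1$, each with a fixed positive margin; hence for $\epsilon$ small enough the first exponent is $<-\tfrac d2\le-1$, the second is $<-1$, and the third term decays faster than any power of $t$, so the probabilities are summable over $t\in\N$. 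Borel--Cantelli then gives, for $\nu$-a.e.\ $x=x_Y$, that $|\int_J\bar\varphi_{t,r}(\exp(t\gamma(\bs s)).x_Y)\,\dd\bs s|\le\lambda_t$ for all large $t\in\N$, whence $\int_J\varphi_{t,r}(\exp(t\gamma(\bs s)).x_Y)\,\dd\bs s\ge\operatorname{Leb}(J)\mu(\Omega_t)-\tfrac13\operatorname{Leb}(J)\mu(\Omega_t)>\tfrac12\operatorname{Leb}(J)\mu(\Omega_t)$, which is \equ{thesis}.

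The main point to get right is the exponent bookkeeping, which is tight. The whole scheme relies on the mollifier being ``small'', i.e.\ on $A_{\bar\varphi_{t,r},h}\ll\mu(\Omega_t)$ and $C_{\bar\varphi_{t,r},h}\ll\mu(\Omega_t)^{h/2}$ (precisely what Lemma~\ref{lem:phitrproperties} was built to provide): with only the trivial bounds $A,B\le1$ the first term above would be $b^{kh/2}$ and the series would diverge. Moreover, because $d-k=1$, the term coming from $B_{\bar\varphi_{t,r},h}$ lands exactly at the critical exponent $-1$, so it is only the \emph{strict} inequality in \equ{measure} that supplies the margin needed to push this term (and, when $m+n=3$, also the $A$-term) strictly below $-1$ after taking $b=t^{-1+\epsilon}$ with $\epsilon$ sufficiently small; getting all of these exponents to line up simultaneously is where the care lies.
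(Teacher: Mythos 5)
Your argument is correct and is essentially the same as the paper's: both center $\varphi_{t,r}$ by subtracting $\mu(\Omega_t)$, apply Lemma~\ref{lem:4mom} with $f\equiv 1$ and $b=t^{-1+\delta}$, estimate $A_{\varphi,h}$, $B_{\varphi,h}$, $C_{\varphi,h}$ via Lemma~\ref{lem:phitrproperties}, and then run Markov plus Borel--Cantelli over $t\in\N$. The paper parametrises the threshold by $\tau<(\operatorname{Leb}(J)/2)^h$ rather than your fixed $\tfrac13$ and plugs in $\mu(\Omega_t)$ directly rather than introducing the auxiliary exponent $q$, but the exponent bookkeeping (including the observation that the $B$-term sits exactly at the critical exponent and needs the strict inequality in \equ{measure}) matches.
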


\begin{proof}
Fix a constant $\tau$ such that $\tau<\left(\frac{\textup{Leb}(J)}{2}\right)^{h}$.
For $t\in\mb N$ consider the sets
$$S_{t}:=\left\{Y\in [0,1)^{mn}:\left(\int_{J}\varphi_{t,r}\left(\exp\big(t\gamma(\bs s)\big)x_{Y}\right)\dd\bs s-\textup{Leb}(J)\cdot\mu(\Omega_t)\right)^h>\tau\cdot \mu(\Omega_t)^{h}\right\}.$$
Observe that Condition \eqref{eq:4mix} is satisfied for all $t>0$ and all $a\in {tM_{\sigma}}$ as a result of Theorem \ref{BG23} (for a more detailed explanation see the argument used in Subsection \ref{subsec:2.2imp1.4}). Then, by Markov's Inequality and Lemma \ref{lem:4mom} applied with ${b}=t^{-1+\delta}$ ($\delta$ positive and only depending on $m,n,$ and $h$), we find that for all sufficiently large $t\in\mb N$ it holds that
\begin{align}
\label{eq:summability}
\nu(S_t)&\leq \frac{1}{\tau\cdot \mu(\Omega_t)^{h}}\cdot\int_{[0,1)^{mn}}\left(\int_{J}\varphi_{t,r}\left(\exp\big(t\gamma(\bs s)\big)x_{Y}\right)\dd\bs s-\textup{Leb}(J)\cdot\mu(\Omega_t)\right)^h\,\dd\nu \\[2mm]\nonumber
&\ll_{h,r,\tau} \frac{t^{(-1+\delta)(m+n-2)(h-\lfloor{h/2}\rfloor)}\cdot \mu(\Omega_{t})^{h/2}+t^{(-1+\delta)(m+n-2)(h-1)}}{\mu(\Omega_t)^{h}}\\[2mm] \nonumber
&= t^{(-1+\delta)(m+n-2)(h/2)}\cdot \mu(\Omega_{t})^{-h/2}+t^{(-1+\delta)(m+n-2)(h-1)}\cdot\mu(\Omega_t)^{-h}. \nonumber
\end{align}
Here, we used Lemma \ref{lem:phitrproperties} to estimate the constants $A_{\varphi}$ and $B_{\varphi}$ for $\varphi=\varphi_{t,r}$, and the fact that $h$ is even.  Note that the quantities $\lambda_{\sup}$ and $\lambda_{\inf}$ are positive and finite 
{in view of Lemma \ref{lem:lambdainfsup} and Remark \ref{rem:reduction}}. 
Now, if the sets $S_t$ have summable measures, by the Borel--Cantelli Lemma  we have that for almost every $Y\in [0,1)^{mn}$ and all but finitely many $t\in\mb N$ (depending on $Y$) it holds that
\begin{equation}
\label{eq:BC}
\left|\int_{J}\varphi_{t,r}\left(\exp\big(t\gamma(\bs s)\big)x_{Y}\right)\dd\bs s-\textup{Leb}(J)\cdot\mu(\Omega_t)\right|\leq \tau^{1/h}\cdot \mu(\Omega_t).   
\end{equation}
By the choice of $\tau$, this implies the conclusion. To prove \eqref{eq:thesis}  it is therefore enough to impose that the following system of inequalities hold
$$\begin{cases}
(-1+\delta)(m+n-2)\dfrac{h}{2}-\dfrac{\log \mu(\Omega_t)}{\log t}\dfrac{h}{2}<-1 \\[3mm]
(-1+\delta)(m+n-2)(h-1)-\dfrac{\log \mu(\Omega_t)}{\log t}h<-1
\end{cases},$$
which ensures the summability of the measures in \eqref{eq:summability}.
The system is equivalent to
$$
\dfrac{\log \mu(\Omega_t)}{\log t}>\max\left\{(-1+\delta)(m+n-2)+\frac{2}{h},\frac{(-1+\delta)(m+n-2)(h-1)}{h}+\frac{1}{h}\right\}.$$
Since $\delta>0$ can be chosen arbitrarily small, this may also be rewritten as 
$$\begin{aligned}\liminf_{t\to\infty}\dfrac{\log \mu(\Omega_t)}{\log t}&>\max\left\{-(m+n-2)+\frac{2}{h},\frac{-(m+n-2)(h-1)}{h}+\frac{1}{h}\right\}\\ &=  - (m+n-2) + {\max\left\{\frac{2}{h},\frac{m+n-2}{h}+\frac{1}{h}\right\}}.\end{aligned}$$
{Since $m+n>2$}, the second term  dominates. {Thus} the conditions in the system are ensured by \eqref{eq:measure}. 
\end{proof}

\subsection{Completion of the proof}

We now complete the proof of Theorem \ref{thm:main}.

For $\lambda\geq 0$ set
$$\psi_{\lambda}(x):=
\begin{cases}
e^{-1} & \mbox{if }x<e \\
x^{-1}(\log x)^{-\lambda} & \mbox{if }x\geq e
\end{cases}.$$
Let $t_{\lambda}>0$ be the unique value of $t$ such that $t-nR_{\lambda}(t)=e$. Then for all $t\geq t_{\lambda}$, the function $R_{\lambda}$ associated to $\psi_{\lambda}$ through Lemma \ref{lem:C1} is defined via the equation
$$\frac{e^{-t+nR_{\lambda}(t)}}{\big(t-nR_{\lambda}(t)\big)^{\lambda}}=\psi_{\lambda}\left(e^{t-nR_{\lambda}(t)}\right)=e^{-t-mR_{\lambda}(t)}.$$
From this, we deduce that 
\begin{equation}
\label{eq:findR}
e^{-(m+n)R_{\lambda}(t)}=\frac{1}{\big(t-nR_{\lambda}(t)\big)^{\lambda}}    
\end{equation}
for all $t\geq t_{\lambda}$. Hence, again in the range $t\geq t_{\lambda}$, we have that $0\leq R_{\lambda}(t)\ll_{m,n}\log t$. Consequently,
$$e^{-(m+n)R_{\lambda}(t)}=\frac{1}{\big(t-nR_{\lambda}(t)\big)^{\lambda}}\sim t^{-\lambda},$$
where the symbol $\sim$ is used to denote the fact that both the inequalities $\ll$ and $\gg$ apply (the implicit constants here depend only on $m$ and $n$).

\medskip

Let us now assume that $\lambda<m+n-2$. Fix $\varepsilon>0$ so that $\lambda+\varepsilon<m+n-2$. We aim to show that \eqref{eq:hyper} is satisfied for all large enough values of $t$.
For $\Omega_t$ as in \eqref{eq:at} relative to the function $R_{\lambda+\varepsilon}$, we have that
$$\mu(\Omega_t)\sim e^{-(m+n)R_{\lambda+\varepsilon}(t)}\sim t^{-\lambda-\varepsilon},$$
where we used Lemma \ref{lem:approx}. Let $0<\sigma<\frac1{\max\{m,n\}}$. Then, on choosing $h\in\mb N$ so large that
$$-\lambda-\varepsilon>-(m+n-2)+\frac{m+n-1}{h},$$
Condition \eqref{eq:measure} is satisfied, and we deduce by Lemma \ref{lem:approx} and Lemma \ref{lem:intersection} that for almost every $Y\in [0,1)^{mn}$ and for all sufficiently large $t\in \mb N$ there exists $\bs s\in(\sigma,1-\sigma)^{m+n-2}$ such that
$$\delta\left(\exp\left(t\gamma(\bs s)\right)x_{Y}\right)<2e^{-R_{\lambda+\varepsilon}(t)}.$$ 
Thus, interpolating between integers and using the fact that $|R_{\lambda+\varepsilon}'(t)|\leq 1$ (as a result of \eqref{eq:findR}), we conclude that for all sufficiently large $t\in\mb R$ it holds that
$$\delta\left(\exp\left(t\gamma(\bs s)\right)x_Y\right)<2e^{-R_{\lambda+\varepsilon}(t)+1}<e^{-R_{\lambda}(t)}$$ 
for some $\bs s\in M_{\sigma}$. Corollary \ref{prop:corr} (see \eqref{eq:hyper}) then implies that $Y\in \UA^{\times}_{m,n}(\psi_{1,\lambda})$, provided we show that
$$t M_{\sigma}\subset C_{R_{\lambda}}(t)$$
for all sufficiently large $t>0$. This holds, since for any $a\in M_{\sigma}$ and any $j=m+1,\dotsc,m+n$ one has that
$$-ta_{j}\geq t\sigma>R_{\lambda}(t)$$
if $t$ is large enough (recall that $R(t)\ll_{m,n}\log t$). Thus, if $\lambda<m+n-2$, it must be $\textup{Leb}\left(\UA^{\times}_{m,n}(\psi_{1,\lambda})\cap[0,1)^{mn}\right)=1.$

\medskip

Finally, we are left to show that the sets $\UA ^{\times}_{m,n}(\psi_{1,\lambda})$ have measure $0$ for $\lambda>m+n-2$. We recall that
$$\UA^{\times}_{m,n}(\psi_{1,\lambda}):=\bigcup_{T_0\geq 1}\bigcap_{T\geq T_0}\mc S^{\times}_{m,n}(\psi,T).$$
Then it is enough to show that
$$\operatorname{Leb}\left(\mc S^{\times}_{m,n}(\psi_{\lambda,1},T)\right)\to 0$$
as $T\to \infty$. We note that
\begin{multline*}
\mc S^{\times}_{m,n}(\psi_{\lambda,1},T)\subset \\
\bigcup_{\Pi_{+}(\bs q)<T}\bigcup_{|\bs p|<|\bs q|}\left\{Y\in[0,1)^{mn}:\prod_{i=1}^{m}|Y_i\bs q-p_i|<\frac{1}{T(\log T)^{\lambda}},\ |Y_i\bs q-p_i|\leq\frac{1}{2}\right\},
\end{multline*}
where $Y_i$ denotes the $i$-th row of the matrix $Y$.
Without loss of generality, we may assume that $q_1=|\bs q|$ (if not, the argument is very similar). Then the change of variable
$$\begin{cases}
Y_{i,1}\mapsto Y_i\bs q & \mbox{for }i=1,\dotsc,m \\
Y_{i,j}\mapsto Y_{i,j} & \mbox{for }j\neq 1
\end{cases}
$$
has determinant $|\bs q|^m$. Since the set
$$\left\{\bs x\in\mb R^m: \prod_{i=1}^{m}|x_i|<\varepsilon,\ |x_i|\leq\frac{1}{2}\right\}$$
has volume
$$2^m\varepsilon\cdot \left[\log\left(2^{-m}\varepsilon^{-1}\right)^{m-1}+1\right],$$
we deduce that
$$\operatorname{Leb}(\mc S^{\times}_{m,n}\big(\psi_{\lambda,1},T)\big)\ll_{m,n}\sum_{\Pi_{+}(\bs q)<T}\sum_{|\bs p|\leq |\bs q|+1}\frac{1}{|\bs q|^m}\cdot \frac{1}{T(\log T)^{\lambda-(m-1)}}\ll_{m,n} (\log T)^{m+n-2-\lambda}.$$
Thus the proof is concluded.

\section{Proof of Proposition \ref{prop:dno}}
\label{sec:dnoproof}

{We now prove Proposition \ref{prop:dno} by adapting an argument presented in \cite[Theorem 3A]{schmidt}. For $c>0$, define $T_c:=\max\{cm^m,n^{n(n-1)}\}$.
For $Y\in\mb R^{m\times n}$, $c>\frac{m! n!}{m^m n^n}$ and $T> T_c$ consider the set 
    \begin{equation}
{\Xi(
Y,T,c):= \\
\left\{(\bs{x},\bs{y})\in \R ^{m+n}:\begin{cases}
\sum_{i=1}^{m}|Y_{i}\bs{y}-x_{i}|\leq m(\frac{c}{T})^{1/m} \\
\sum_{j=1}^{n}|y_j|\leq nT^{1/n}
\end{cases}
\right\}
\nonumber}.
\end{equation}
This is a convex and symmetric set whose volume is given by $$\frac{2^{m+n}m^m n^n c}{m! n!}\geq 2^{m+n}.$$
Hence, by Minkowski's Convex Body Theorem, $$\Xi(
Y,T,c)\cap \Z^{m+n}\neq \{\bs 0\}.$$
Let $0\neq(\bs{p},\bs{q})\in \Xi(
Y,T,c)\cap \Z^{m+n}$. If $\bs{q}=0$, then 
$$1\leq\sum_{i=1}^m |p_i|\leq m({c}/{T})^{1/m}$$
which gives $T\leq cm^m$. Hence $\bs{q}\neq 0$. Suppose $1\leq k\leq n$ be such that $q_i\neq 0$ for $1\leq i\leq k$ and $q_i=0$ for $k+1\leq i\leq n$. Then we have
$$\sum_{j=1}^{k}|q_j|\leq nT^{1/n}$$
Using the AM--GM inequality, which states for any nonnegative real numbers $x_1,\dots,x_n$ one has that
$$ (x_1\dots x_n)^{1/n}\leq \frac{x_1+\dots+x_n}{n},$$
we deduce that
$$\left(\prod_{j=1}^k|q_j|\right)^{\frac{1}{k}}\leq \frac{1}{k}\sum_{j=1}^k|q_j|\leq \frac{n}{k}T^{1/n}.$$
Hence
\begin{equation}
\label{eq:prod1}
  \prod_{j=1}^n \max\{1,|q_j|\}= \prod_{j=1}^k |q_j|\leq \left(\frac{n}{k}\right)^k T^{k/n}\leq T.  
\end{equation}
The last inequality follows since $T\geq n^{n(n-1)}$. Again, by the AM--GM inequality, we have that
\begin{equation}
\label{eq:prod2}
\left(\prod_{i=1}^m|Y_i\bs{q}-p_i|\right)^{1/m}\leq \frac{1}{m}\sum_{i=1}^m |Y_i\bs{q}-p_i|\leq \left(\frac{c}{T}\right)^{1/m}.
\end{equation}
Finally, \eqref{eq:prod1} and \eqref{eq:prod2} together imply that $Y\in \mc S^{\times}_{m,n}(c\psi_1,T)$ for every $T\geq T_c$, since, by definition,
\begin{equation}
{\mc{S}_{m,n}^{\times}(
c\psi_1,T):= \\
\left\{Y\in \mb{R}^{m\times n}:\exists\,\bs{p}\in\mb{Z}^{m},\, \bs{q}\in\mb{Z}^{n}\setminus\{\bs{0}\}\mbox{ s.t.}\begin{cases}
\prod_{i=1}^{m}|Y_{i}\bs{q}-p_{i}|< c/T \\
\prod_{i=1}^{n}\max\{1,|q_i|\}< T
\end{cases}
\right\}\nonumber}.
\end{equation}
Hence $Y\in \UA^{\times}_{m,n}(c\psi_1) $ for $c>\frac{m! n!}{m^m n^n}$.}
\bibliographystyle{alpha}
\bibliography{References}
\end{document}